\def\ps@pprintTitle{%
 \let\@oddhead\@empty
 \let\@evenhead\@empty
 \def\@oddfoot{}%
 \let\@evenfoot\@oddfoot}
\newtheorem{theorem}{Theorem}
\newtheorem{remark}{Remark}
\newdimen\CdotAxis
\newcommand*{\CdotAux}[3]{%
  {%
    \settoheight\CdotAxis{$#2\vcenter{}$}%
    \sbox0{%
      \raisebox\CdotAxis{%
        \scalebox{#1}{%
          \raisebox{-\CdotAxis}{%
            $\mathsurround=0pt #2#3$%
          }%
        }%
      }%
    }%
    \dp0=0pt %
    \sbox2{$#2\bullet$}%
    \ifdim\ht2<\ht0 %
      \ht0=\ht2 %
    \fi
    \sbox2{$\mathsurround=0pt #2#3$}%
    \hbox to \wd2{\hss\usebox{0}\hss}%
  }%
}
\DeclareMathOperator{\sign}{sign}
\DeclareMathOperator{\erf}{erf}
\providecommand{\Lp}[1]{\ensuremath{L^{#1}\left(\mathbb{R}\right)}}
\providecommand{\Sr}{S(\mathbb{R})}
\providecommand{\Spr}{S'(\mathbb{R})}
\providecommand{\e}[1]{\ensuremath{\times 10^{#1}}}
\g@addto@macro\normalsize{%
  \setlength\abovedisplayskip{.4em}
  \setlength\belowdisplayskip{.4em}
  \setlength\abovedisplayshortskip{.4em}
  \setlength\belowdisplayshortskip{.4em}
}
\begin{document}

\begin{frontmatter}
\begin{abstract}
We describe an algorithm for the numerical solution of 
second order linear ordinary differential equations in the high-frequency 
regime.  It is founded on the recent observation
that solutions of equations of this type can be 
accurately represented using nonoscillatory phase functions.
Unlike standard solvers for ordinary differential equations,
the running time of our algorithm is independent of the frequency 
of oscillation of the solutions.  
We illustrate this and other properties  of the method with numerical experiments.

\end{abstract}

\begin{keyword}
Ordinary differential equations \sep
fast algorithms \sep
phase functions \sep
special functions \sep
Bessel's equation
\end{keyword}

\title
{
On the numerical solution of 
second order ordinary differential equations in the high-frequency regime
}

\author[jb]{James Bremer\corref{cor1}}

\cortext[cor1]{Corresponding author (bremer@math.ucdavis.edu)}

\address[jb]{Department of Mathematics, University of California, Davis}

\end{frontmatter}

\begin{section}{Introduction}

Second order linear differential equations of the form
\begin{equation}
y''(t) + \lambda^2 q(t) y(t) = 0
\ \ \ \mbox{for all}\ \ a \leq t \leq b
\label{introduction:second_order}
\end{equation}
are ubiquitous in analysis and mathematical physics.  As a consequence, much
attention has been devoted to the development of numerical algorithms for their solution
and, in most regimes, fast and accurate methods are available.

However, when $q$ is positive and $\lambda$ is real-valued and large,
the solutions of  
(\ref{introduction:second_order}) are highly oscillatory
(this is a consequence of the Sturm comparison theorem)
and standard solvers for ordinary 
differential equations (for instance, Runge-Kutta schemes and spectral methods) suffer.
Specifically, their  running times 
grow linearly with the parameter $\lambda$,
which makes them prohibitively expensive when $\lambda$ is large.

Because of the poor performance of standard solvers,
asymptotic methods are often used in this regime.
In some instances, they allow for 
the accurate evaluation of 
solutions of equation of the form (\ref{introduction:second_order})
 using a number of operations which is independent 
of the parameter $\lambda$.  For example, \cite{Bogaert-Michiels-Fostier} presents
an $\mathcal{O}(1)$ algorithm for calculating  Legendre polynomials of arbitrary
order  using a combination of direct
evaluation and asymptotic formulas; it achieves near machine precision accuracy
and serves as the basis for an efficient parallel algorithm (also presented
in \cite{Bogaert-Michiels-Fostier}) for the 
construction of Gauss-Legendre quadratures of extremely large orders.

The formulas used in \cite{Bogaert-Michiels-Fostier} are particular
to Legendre polynomials, and while the same approach can be 
applied to other classes of special functions defined by equations
of the form (\ref{introduction:second_order}), in each case
a new, specialized approach  must be devised. 
Indeed, despite the extensive existing literature on the asymptotic
approximation of Legendre polynomials, the algorithm of \cite{Bogaert-Michiels-Fostier}
 required the development  of a novel asymptotic expansion with suitable numerical
properties.

Here, we describe an algorithm for the numerical solution of second order
linear ordinary differential equations of the form (\ref{introduction:second_order})
whose running time is independent of the parameter $\lambda$.  
It  applies to a large class of second order
ordinary differential equations ---
which includes those defining Bessel functions, Legendre functions of noninteger
orders, prolate spheriodal wave functions and the classical orthogonal polynomials ---
and its only inputs are the parameter $\lambda$ and the values of the function
$q$ at a collection of points on the interval $[a,b]$ on which
(\ref{introduction:second_order}) is given. 

Our approach proceeds by constructing a nonscillatory phase
function which represents solutions of (\ref{introduction:second_order}).
We say that $\alpha$ is a phase function
for (\ref{introduction:second_order}) if the functions $u,v$ defined by the formulas
\begin{equation}
u(t) = \frac{\cos(\alpha(t)) }{\left|\alpha'(t)\right|^{1/2}},
\label{introduction:u}
\end{equation}
and
\begin{equation}
v(t) = \frac{\sin(\alpha(t)) }{\left|\alpha'(t)\right|^{1/2}}
\label{introduction:v}
\end{equation}
comprise
a basis in the space of solutions of (\ref{introduction:second_order}).
Phase functions play a key role in the theories of special functions and
global transformations of ordinary differential equations
\cite{Boruvka,Neuman,NISTHandbook,Andrews-Askey-Roy}, and
are the basis of many numerical algorithms
(see \cite{Spigler-Vianello, Goldstein-Thaler,Heitman-Bremer-Rokhlin-Vioreanu}
for  representative examples).

It was observed by E.E. Kummer in \cite{Kummer}  that $\alpha$ is  a phase function for 
(\ref{introduction:second_order})  if and only if it satisfies the 
third order nonlinear differential equation
\begin{equation}
\left(\alpha'(t)\right)^2 = \lambda^2q(t) - \frac{1}{2}\frac{\alpha'''(t)}{\alpha'(t)}
+ \frac{3}{4} \left(\frac{\alpha''(t)}{\alpha'(t)}\right)^2
\label{introduction:kummers_equation}
\end{equation}
on the interval $[a,b]$.  
The presence of quotients in (\ref{introduction:kummers_equation})
is often inconvenient, and we prefer the more tractable equation
\begin{equation}
r''(t) - \frac{1}{4} \left(r'(t)\right)^2 + 4\lambda ^2 \left( \exp(r(t)) - q(t)\right) = 0 
\label{introduction:kummers_equation_logarithm}
\end{equation}
obtained from (\ref{introduction:kummers_equation}) by letting
\begin{equation}
\alpha'(t) = \lambda \exp\left(\frac{r(t)}{2}\right).
\end{equation}
Of course, if $r$ is a solution of (\ref{introduction:kummers_equation_logarithm})
then a solution $\alpha$ of (\ref{introduction:kummers_equation})
is given by the formula
\begin{equation}
\alpha(t) = \lambda \int_a^t \exp\left(\frac{r(u)}{2}\right)\ du.
\label{introduction:alphafromr}
\end{equation}
We will refer to (\ref{introduction:kummers_equation}) as Kummer's equation
and (\ref{introduction:kummers_equation_logarithm}) as  the logarithm form
of Kummer's equation.
The form of these equations
 and the appearance
of $\lambda$ in them suggests that their solutions will be oscillatory --- and most of them are.
However, there are several well-known examples of 
second order ordinary differential equations
which admit nonoscillatory phase functions.  For example, the function
\begin{equation}
\alpha(t) = \lambda \arccos(t)
\end{equation}
is a phase function for  Chebyshev's equation
\begin{equation}
y''(t) + \left(\frac{2 + t^2 + 4 \lambda^2 (1-t^2)}{4(1-t^2)^2} \right) y(t) = 0
\ \ \ \mbox{for all} \ \ -1 \leq t \leq 1.
\label{introduction:chebyshev}
\end{equation}
Its existence is the basis of the fast Chebyshev
transform and several other widely used numerical algorithms
(see, for instance, \cite{Trefethen}).
 Bessel's equation
\begin{equation}
y''(t) + \left(1-\frac{\lambda^2-1/4}{t^2} \right) y(t) = 0
\ \ \ \mbox{for all}\ \ 0 < t < \infty
\label{introduction:bessel}
\end{equation}
also admits a nonoscillatory phase function, although it cannot be 
expressed via elementary functions 
\cite{Heitman-Bremer-Rokhlin-Vioreanu}.

Exact solutions of  (\ref{introduction:kummers_equation}) 
 which are nonoscillatory need not  exist in the general case.  
However, \cite{Heitman-Bremer-Rokhlin} and \cite{Bremer-Rokhlin} make the observation that
when the coefficient $q$ appearing in (\ref{introduction:second_order}) is nonoscillatory,
there exists a nonoscillatory function $\alpha$ such that 
(\ref{introduction:u}), (\ref{introduction:v}) approximate
solutions (\ref{introduction:second_order})
 with accuracy on the order of $(\mu\lambda)^{-1}\exp(-\mu \lambda)$,
where $\mu$ is a constant which depends on the coefficient $q$
 but not on $\lambda$.       More specifically, there is a nonoscillatory
function $r$ which is a solution of the equation
\begin{equation}
r''(t) - \frac{1}{4} \left(r'(t)\right)^2 + 4\lambda ^2 \left( \exp(r(t)) - q(t)\right) = 
q(t) \nu,
\label{introduction:r2eq}
\end{equation}
where $\nu$ is a smooth function such that
\begin{equation}
\left\|\nu\right\|_\infty = \mathcal{O}\left(\frac{1}{\mu}\exp(-\mu\lambda)\right).
\label{introduction:nu}
\end{equation}
The function $\alpha$ obtained from $r$ via formula  (\ref{introduction:alphafromr})
is a solution of the nonlinear differential equation
\begin{equation}
\left(\alpha'(t)\right)^2 = \lambda^2 
\left(\frac{\nu(t)}{4\lambda^2}+1\right)q(t) - \frac{1}{2}\frac{\alpha'''(t)}{\alpha'(t)}
+ \frac{3}{4} \left(\frac{\alpha''(t)}{\alpha'(t)}\right)^2
\ \ \ \mbox{for all}\ \ t \in\mathbb{R};
\label{introduction:alpha2}
\end{equation}
this implies that $\alpha$ is a phase function
for the equation
\begin{equation}
y''(t) + \lambda^2 \left(1+\frac{\nu(t)}{4\lambda^2}\right)q(t)y(t) = 0
\ \ \ \mbox{for all}\ \ a \leq t \leq b.
\label{introduction:second_order2}
\end{equation}
%
It follows from (\ref{introduction:second_order2}) and (\ref{introduction:nu})
that when $\alpha$ is inserted into formulas 
(\ref{introduction:u}) and (\ref{introduction:v}), 
the resulting functions approximate solutions of 
(\ref{introduction:second_order}) with 
$\mathcal{O}\left((\mu\lambda)^{-1}\exp(-\mu \lambda)\right)$
accuracy (see Theorem~12 in \cite{Bremer-Rokhlin}).
The functions $r$ and $\alpha$ are nonoscillatory
in the sense that then can be accurately represented
using various series expansions (e.g., expansions in Chebyshev polynomials)
whose number of terms does not depend on $\lambda$.
In other words,  $\mathcal{O}(1)$ terms are required to represent the 
 solutions of (\ref{introduction:second_order}) with 
 $\mathcal{O}\left((\mu\lambda)^{-1}\exp(-\mu\lambda)\right)$ accuracy.
This is an improvement over superasymptotic and hyperasymptotic expansions
(see, for instance,  \cite{Daalhuis-Olver,Daalhuis}),
which represent
solutions of (\ref{introduction:second_order}) with accuracy on the order
of $\exp(-\rho \lambda)$ using expansions with $\mathcal{O}(\lambda)$ terms.
Theorem~12 of \cite{Bremer-Rokhlin},
which is reproduced as  Theorem~\ref{main_theorem} in Section~\ref{section:nonoscillatory}
of this article, 
gives a precise statement regarding the existence of nonoscillatory phase
functions.

The method used to establish the existence of nonoscillatory
phase functions in  \cite{Bremer-Rokhlin} is constructive and could serve as the basis
of a numerical method for their computation.  There are, however, at least
two serious drawbacks to such an approach:  an algorithm based on
the method of \cite{Bremer-Rokhlin} would require 
that the coefficient $q$ be extended to the real line as well as knowledge
of the  first two derivatives of $q$.  

In this article, we describe a method
for constructing a solution of the logarithm form of Kummer's equation
whose difference from the nonoscillatory solution of 
(\ref{introduction:r2eq})
is on the order of $\exp\left(-\frac{1}{2}\mu\lambda\right)$.
It does not require that $q$ be extended beyond the interval
$[a,b]$, nor does it take as input the values of the derivatives of $q$.
Indeed,  its only inputs are the parameter $\lambda$
and the values of $q$ at a collection of points on the interval $[a,b]$.

Our approach is based on two observations.  First, that if
\begin{equation}
q(t) = 1 + \epsilon(t)
\label{introduction:qalmost1}
\end{equation}
for all $t$ in an interval of the form $[a,a+\tau]$, where $\tau > 0$ and
$\epsilon$
is a smooth function of sufficiently small magnitude,
then the difference between the solution of the boundary value problem
\begin{equation}
\left\{
\begin{aligned}
r''(t) - \frac{1}{4} \left(r'(t)\right)^2 + 4\lambda^2\left(\exp(r(t)) - q(t)\right) &= 0 
\ \ \mbox{for all} \ \ a \leq t \leq b
\\
r(a) = r'(a) &= 0
\end{aligned}
\right.
\label{introduction:bvp1}
\end{equation}
and the nonoscillatory solution 
of (\ref{introduction:r2eq}) 
is on the order of  $\exp\left(-\frac{1}{2} \mu \lambda\right)$ on the interval $[a,b]$.
Second,  that when the coefficient $q$ is perturbed by a 
smooth function $\varphi$ which is of sufficiently small magnitude on the interval
$[a,b]$,
the changes induced in the restrictions of the nonoscillatory 
solution of (\ref{introduction:r2eq}) 
and its derivative to the interval $[a,b]$ 
are on the order of
$\exp\left(- \mu \lambda\right)$.
%
Both of these observations are obtained by 
combining  Theorem~\ref{bound:theorem5} of Section
\ref{section:bound} and Theorem~\ref{continuity:theorem3}
of Section~\ref{section:continuity},
 which are the two principal results of this article.

We exploit these observations  as follows.
First, we  construct a windowed version $\tilde{q}$ of the function $q$ such that
\begin{equation}
\tilde{q}(t) = \begin{cases}
1 + \epsilon(t)   & \mbox{for all}\ \ a \leq t \leq b-\Delta\\
q(t)              & \mbox{for all}\ \ b-\Delta < t \leq b,
\end{cases}
\label{introduction:qtilde1}
\end{equation}
where $\Delta$ is a small positive real number and
$\epsilon(t)$ is a function of small magnitude,
and calculate a solution $r_1$ of the initial value problem
\begin{equation}
\left\{
\begin{aligned}
r_1''(t) - \frac{1}{4} \left(r_1'(t)\right)^2 
+ 4\lambda^2\left(\exp\left(r_1(t)\right) - \tilde{q}(t)\right) &= 0
\ \ \ \mbox{for all}\ \ a \leq t \leq b
 \\
r_1(a)=r_1'(a) &= 0.
\end{aligned}
\right.
\label{introduction:bvp2}
\end{equation}
Next, we obtain a solution $r_2$ of the problem
\begin{equation}
\left\{
\begin{aligned}
r_2''(t) - \frac{1}{4} \left(r_2'(t)\right)^2 
+ 4\lambda^2\left(\exp\left(r_2(t)\right) - q(t)\right) &= 0
\ \ \ \mbox{for all}\ \ a \leq t \leq b
 \\
r_2\left(b\right)= r_1\left(b\right)
\ \ \ \mbox{and}\ \  \
r_2'\left(b\right)&= r_1\left(b\right).
\end{aligned}
\right.
\label{introduction:bvp3}
\end{equation}
From our first observation, we see that the difference
between the solution of (\ref{introduction:bvp2}) and the nonoscillatory 
function $\tilde{r}$ obtained by applying Theorem~\ref{main_theorem}
to the equation
\begin{equation}
\tilde{r}''(t) - 
\frac{1}{4} \left(\tilde{r}'(t)\right)^2 + 4\lambda ^2 \left( \exp(\tilde{r}(t)) - \tilde{q}(t)
\right) = 0,
\label{introduction:r1}
\end{equation}
is on the order of  $\exp\left(-\frac{1}{2} \mu \lambda\right)$.
Moreover, according to our second observation, the difference between the
the function $r_1$ 
and the nonoscillatory solution $r$ of 
\begin{equation}
r''(t) - \frac{1}{4} \left(r'(t)\right)^2 + 4\lambda ^2 \left( \exp(r(t)) - q(t)\right) = 
q(t) \nu(t)
\label{introduction:r2}
\end{equation}
whose existence is guaranteed by Theorem~\ref{main_theorem} 
is on the order of 
\begin{equation}
\exp\left(-\frac{1}{2} \mu \lambda\right)
\label{introduction:r2error}
\end{equation}
on the interval $\left[b-\Delta,b\right]$,
as is the difference between the derivatives of these two functions.
In particular, 
\begin{equation}
\left|r_1\left(b\right) -r\left(b\right) \right|
+
\left|r_1'\left(b\right) - r'\left(b\right)\right|
= O\left(\exp\left(-\frac{1}{2}\mu\lambda\right)\right).
\label{r1order}
\end{equation}
Together (\ref{introduction:nu}), (\ref{introduction:bvp3}),
(\ref{introduction:r2}), and (\ref{r1order})
imply that
\begin{equation}
\left| r_2(t) - r(t) \right| = O\left( \exp\left(-\frac{1}{2}\mu\lambda\right)\right)
\end{equation}
for all $t \in [a,b]$.
That is, the difference between the solution $r_2$ of the boundary value problem 
(\ref{introduction:bvp3})
and  the nonoscillatory solution $r$ of
(\ref{introduction:r2eq}) decays exponentially with $\lambda$.  

In the high-frequency regime, the difference between $r_1$ and the nonoscillatory
function $\tilde{r}$
is considerably smaller than machine precision,
as is the difference between $r_2$ and the nonoscillatory function $r$.
Consequently, for the purposes of numerical computation, $r_1$ and
$r_2$  can be regarded as nonoscillatory.  In particular, 
solutions of the boundary value problems
(\ref{introduction:bvp2}) and (\ref{introduction:bvp3})
can be obtained via a standard method
for the numerical solution of ordinary differential equations,
and each of the functions $r_1$ and $r_2$ can be approximated to high accuracy by
a finite series expansion whose number of terms 
does not depend on $\lambda$.  Moreover, the number of operations
required to compute these expansions of $r_1$ and $r_2$ does not depend on $\lambda$.

There is one significant limitation on the accuracy
obtained by the algorithm of this paper.  When $\lambda$ is large, the
evaluation of the functions $u$, $v$ defined via the formulas
(\ref{introduction:u}) and (\ref{introduction:v})
requires the computation of trigonometric functions of large
arguments.  There is an inevitable loss of accuracy when these calculations are performed
in finite precision arithmetic.  Nonetheless, acceptable accuracy is obtained
in many cases. For instance, Section~\ref{section:experiments:bessel}
describes an  experiment in which the Bessel function of the first kind of order $10^8$
was evaluated at a large collection of points on the real axis to approximately
ten digits of accuracy.

We also note that while some accuracy is lost when evaluating solutions
of (\ref{introduction:second_order}) when $\lambda$ is large,  
the phase functions produced by the algorithm
of this paper are highly accurate is all cases.
 Among other things, they can be used
to rapidly calculate the roots of special functions to extremely
high precision.
This and other applications of nonoscillatory phase functions will be reported at a later date.

The remainder of this paper is organized as follows.    Section~\ref{section:preliminaries}
summarizes a number of mathematical and numerical facts to be used in the rest of the paper.
  In Section~\ref{section:apparatus}, we develop the analytic
apparatus used in 
Section~\ref{section:algorithm} to develop an 
 algorithm for the rapid solutions of second order linear differential
equations in the high-frequency regime.   Section~\ref{section:experiments}
presents the results of numerical experiments conducted to assess the performance
of the algorithm of Section~\ref{section:algorithm}.

\end{section}

\begin{section}{Analytic and numerical preliminaries}

%
%

\begin{subsection}{Schwartz functions and tempered distributions}


We say that an infinitely differentiable function
$\varphi: \mathbb{R} \to \mathbb{C}$ is a Schwartz function if $\varphi$ and 
all of its derivatives decay faster 
than any polynomial.  That is, if 
\begin{equation}
\sup_{t\in\mathbb{R}} |t^i \varphi^{(j)}(t)| < \infty
\end{equation}
for all pairs $i,j$ of nonnegative integers.  The set  of all Schwartz functions 
is denoted by  $\Sr$.   We endow it with the topology generated by the
family of seminorms
\begin{equation}
\|\varphi\|_{k} =\sum_{j=0}^k\ \sup_{t\in\mathbb{R}} \left|t^k \varphi^{(j)}(x)\right|
\ \ \ k=0,1,2,\ldots,
\end{equation}
so that a sequence $\{\varphi_n\}$ of functions in $\Sr$ converges
to $\varphi$ in $\Sr$ if and only if
\begin{equation}
\lim_{n\to\infty}\|\varphi_n - \varphi\|_{k} = 0 \ \ \ \mbox{for all}\ \ k=0,1,2,\ldots.
\end{equation}
We denote the space of continuous linear functionals on $\Sr$, which are known
as tempered distributions, by $\Spr$.


See, for instance, \cite{HormanderI} for a thorough discussion of
Schwartz functions and tempered distributions.

\label{section:preliminaries:schwartz}
\end{subsection}

%

\begin{subsection}{The Fourier transform}
We define the Fourier transform of a function $f \in \Sr$ via the formula
\begin{equation}
\widehat{f}(\xi) = \int_{-\infty}^{\infty} \exp(-i x\xi)f(x)\ dx.
\label{preliminaries:fourier:1}
\end{equation}
The Fourier transform is an isomorphism $\Sr \to \Sr$ (meaning that it
is a continuous, invertible mapping $\Sr \to \Sr$ whose inverse is also
continuous).  
The formula
\begin{equation}
\left<\widehat{\omega},\varphi\right> = \left<\omega,\widehat{\varphi}\right>
\label{preliminaires:fourier:2}
\end{equation}
 extends the Fourier transform to an isomorphism $\Spr \to \Spr$.
The definition (\ref{preliminaires:fourier:2})
coincides with (\ref{preliminaries:fourier:1}) when $f \in \Lp{1}$.
Moreover, when $f \in \Lp{2}$,
\begin{equation}
\widehat{f}(\xi) = \lim_{R\to\infty} \int_{-R}^{R} \exp(-i x\xi)f(x)\ dx.
\label{preliminaries:fourier:3}
\end{equation}
Owing to our choice of convention for the Fourier transform,
\begin{equation}
\widehat{f*g}(\xi) = \widehat{f}(\xi) \widehat{g}(\xi)
\label{preliminaries:fourier:4}
\end{equation}
and
\begin{equation}
\widehat{f \cdot g}(\xi) = \frac{1}{2\pi} \int_{-\infty}^\infty \widehat{f}(\xi-\eta)
\widehat{g}(\eta)\ d\eta
\label{preliminaries:fourier:5}
\end{equation}
whenever $f$ and $g$ are elements of $\Lp{1}$.
Moreover, 
\begin{equation}
f(x) = \frac{1}{2\pi}\int_{-\infty}^\infty \exp(ix\xi) \widehat{f}(\xi)\ d\xi
\end{equation}
whenever $f$ and $\widehat{f}$ are elements of $\Lp{1}$.  
The observation that $f$ is an entire function when
 $\widehat{f}$ is a compactly supported distribution
 is one consequence of the well-known Paley-Wiener theorem.
See \cite{GrafakosC, Grafakos} for a thorough treatment
of the Fourier transform.

\end{subsection}

%
%

\begin{subsection}{The constant coefficient Helmholtz equation}

The following theorem is a special case of a more general one which can be 
found in \cite{HormanderII}.

\vskip 1em
\begin{theorem}
Suppose that $f \in \Sr$.
If $\lambda$ is a positive real number, 
then the function $g$ defined by the formula
\begin{equation}
g(x) = \frac{1}{2\lambda} \int_{-\infty}^\infty\sin \left(\lambda \left|x-y\right|\right) f(y)\ dy
\label{preliminaries:helmholtz:g}
\end{equation}
is an infinitely differentiable function,
\begin{equation}
g''(x) + \lambda^2 g(x) = f(x) \ \ \mbox{for all} \ x\in\mathbb{R},
\label{preliminaries:helmholtz:diffeq}
\end{equation}
and 
\begin{equation}
\widehat{g}(\xi) = 
\frac{\widehat{f}(\xi)}{\lambda^2 - \xi^2}.
\label{preliminaries:helmholtz:tempdist}
\end{equation}
If $\lambda$ is complex number with positive imaginary part,
then the function $h$ defined by the formula
\begin{equation}
h(x) = \frac{1}{2\lambda i } 
\int_{-\infty}^\infty\exp \left(2 \lambda i\left|x-y\right|\right) f(y)\ dy
\label{preliminaries:helmholtz:h}
\end{equation}
is an infinitely differentiable function,
\begin{equation}
h''(x) + \lambda^2 h(x) = f(x) \ \ \mbox{for all} \ x\in\mathbb{R},
\end{equation}
and 
\begin{equation}
\widehat{h}(\xi) = 
\frac{\widehat{f}(\xi)}{\lambda^2 - \xi^2}.
\end{equation}

\label{preliminaries:helmholtz:theorem1}
\end{theorem}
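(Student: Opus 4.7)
The plan is to view both $g$ and $h$ as convolutions of $f$ with an explicit fundamental solution $E_\lambda$ of the operator $L = \partial_t^2 + \lambda^2$, and then extract the three stated conclusions — smoothness, the differential equation, and the Fourier-side formula — in that order. For real $\lambda > 0$ take $E_\lambda(x) = \sin(\lambda|x|)/(2\lambda)$; a short distributional calculation using $(\mathrm{sign})' = 2\delta_0$ shows that $LE_\lambda = \delta_0$, so that formally $L(E_\lambda * f) = f$. The complex-$\lambda$ case is handled analogously with the exponential kernel appearing in the integrand of (\ref{preliminaries:helmholtz:h}).

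First I would verify smoothness and convergence. Because $f \in \Sr$ has rapid decay and $E_\lambda$ is either bounded (real case) or exponentially decaying on the real axis (complex case, because $\mathrm{Im}(\lambda) > 0$ makes the real part of the exponent negative as $|x-y| \to \infty$), the integrals defining $g(x)$ and $h(x)$ converge absolutely and uniformly in $x$. To differentiate, I would split each integral at $y = x$ to remove the absolute value, apply Leibniz' rule on each half-line, and observe that the boundary contributions at $y = x$ from the two halves cancel in pairs — for $g$ this uses $\sin(0) = 0$, and the analogous cancellation in the $h$ computation comes from the matching orientation of the two half-lines. Iterating this differentiation yields $g \in C^\infty(\mathbb{R})$ and $h \in C^\infty(\mathbb{R})$; two such differentiations performed carefully produce $g'' + \lambda^2 g = f$ and $h'' + \lambda^2 h = f$ pointwise.

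With the differential equations in hand, the Fourier-transform formulas follow by regarding $g$ and $h$ as tempered distributions. Both are continuous and bounded on $\mathbb{R}$ — $|g(x)| \leq \|f\|_{L^1(\mathbb{R})}/(2\lambda)$, and an analogous estimate for $h$ uses the decay provided by $\mathrm{Im}(\lambda) > 0$ — so both lie in $\Spr$. Under the convention (\ref{preliminaries:fourier:1}) extended distributionally via (\ref{preliminaires:fourier:2}), one has $\widehat{u''}(\xi) = -\xi^2\,\hat u(\xi)$ for every $u \in \Spr$, so taking the distributional Fourier transform of $u'' + \lambda^2 u = f$ gives $(\lambda^2 - \xi^2)\,\hat u(\xi) = \hat f(\xi)$, which is precisely (\ref{preliminaries:helmholtz:tempdist}).

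The point I expect to require the most care is the real-$\lambda$ case of the Fourier identity, because $\lambda^2 - \xi^2$ vanishes at $\xi = \pm \lambda$. The equation $(\lambda^2 - \xi^2)\,\hat g(\xi) = \hat f(\xi)$ does not determine $\hat g$ uniquely in $\Spr$, since any linear combination of the Dirac masses $\delta_{\pm\lambda}$ could be added without affecting the product. The formula $\hat g(\xi) = \hat f(\xi)/(\lambda^2 - \xi^2)$ must therefore be read as shorthand for the distributional identity itself (valid for the specific $g$ built from (\ref{preliminaries:helmholtz:g})), rather than as a pointwise quotient. The complex-$\lambda$ case is smoother in this respect: $\lambda^2 - \xi^2$ has no real zero when $\mathrm{Im}(\lambda) > 0$, so $1/(\lambda^2 - \xi^2)$ is a bounded smooth multiplier on $\Spr$ and the division is unambiguous.
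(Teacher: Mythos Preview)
The paper does not prove this theorem at all; it simply records it as ``a special case of a more general one which can be found in \cite{HormanderII}.'' So there is no paper-side argument to compare against, and your sketch --- convolution with an explicit fundamental solution, differentiation by splitting at $y=x$, then passage to the Fourier side --- is a perfectly reasonable way to supply the missing proof.

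There is, however, one point where your sketch stops short of what the paper actually claims. In your final paragraph you propose to read $\widehat{g}(\xi)=\widehat{f}(\xi)/(\lambda^2-\xi^2)$ merely as shorthand for the multiplicative identity $(\lambda^2-\xi^2)\widehat{g}(\xi)=\widehat{f}(\xi)$, and you correctly note that this identity alone leaves $\widehat{g}$ undetermined up to Dirac masses at $\pm\lambda$. But the paper does \emph{not} intend that weak reading: immediately after the theorem it states that the right-hand side is to be interpreted as the principal-value distribution
\[
\left\langle \frac{\widehat{f}(\xi)}{\lambda^2-\xi^2},\,\varphi\right\rangle
=\frac{1}{2\lambda}\left(
\lim_{\epsilon\to 0}\int_{|\xi-\lambda|>\epsilon}\frac{\widehat{f}(\xi)\varphi(\xi)}{\lambda-\xi}\,d\xi
-\lim_{\epsilon\to 0}\int_{|\xi+\lambda|>\epsilon}\frac{\widehat{f}(\xi)\varphi(\xi)}{\lambda+\xi}\,d\xi
\right),
\]
which is a specific tempered distribution with no $\delta_{\pm\lambda}$ component. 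Your argument, as written, does not rule out such components in $\widehat{g}$. To close this gap you would need one additional step --- for instance, computing the distributional Fourier transform of the kernel $E_\lambda(x)=\sin(\lambda|x|)/(2\lambda)$ directly and identifying it with $\mathrm{p.v.}\,1/(\lambda^2-\xi^2)$, or obtaining the real-$\lambda$ case as the limit $\eta\to 0^+$ of the complex case $\lambda+i\eta$ (where, as you note, the division is unambiguous) and checking that the limit of $1/((\lambda+i\eta)^2-\xi^2)$ in $\Spr$ is the principal value rather than something with a one-sided $i\pi\delta$ contribution. Either route is short, but one of them is needed if you want to match the paper's stated conclusion rather than a weaker one.
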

We interpret the Fourier transform (\ref{preliminaries:helmholtz:tempdist})
of $g$  as a tempered distribution
defined via principal value integrals; 
that is to say that for all $\varphi \in \Sr$,
\begin{equation}
\left<\frac{\widehat{f}(\xi)}{\lambda^2-\xi^2}, \varphi \right>
=
\frac{1}{2\lambda}
\left(
\lim_{\epsilon \to 0}
\int_{|\xi-\lambda| > \epsilon} 
\frac{\widehat{f}(\xi) \varphi(\xi)}{\lambda-\xi}\ d\xi
-
\lim_{\epsilon \to 0}
\int_{|\xi+\lambda| > \epsilon} 
\frac{\widehat{f}(\xi)\varphi(\xi)}{\lambda+\xi}\ d\xi
\right).
\end{equation}

The following variant of Theorem~\ref{preliminaries:helmholtz:theorem1}
can be found in \cite{Coddington-Levinson}.
\vskip 1em
\begin{theorem}
Suppose that $f$ is continuous on the interval $[a,b]$, and that
 $\lambda$
 is a positive real number.  Suppose also that $y:[a,b]\to\mathbb{C}$ is twice
continuously differentiable, and that
\begin{equation}
y''(x) + \lambda^2 y(x) = f(x)
\ \ \ \mbox{for all}\ \ a \leq x \leq b.
\end{equation}
Then
\begin{equation}
y(x) = y(a) + y'(a) (x-a) + \frac{1}{\lambda}
\int_a^x 
\sin\left(\lambda\left(x-u\right)\right) f(u)\ du
\ \ \ \ \mbox{for all} \ \ a \leq x \leq b.
\end{equation}
\label{preliminaries:helmholtz:theorem2}
\end{theorem}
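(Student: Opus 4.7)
My plan is to verify the identity by direct differentiation and then invoke uniqueness of solutions of linear ODEs. Specifically, denote the right-hand side of the claimed formula by $w(x)$, and show that $w$ satisfies the same second-order linear ODE as $y$ on $[a,b]$ with the same initial data $w(a)=y(a)$, $w'(a)=y'(a)$. Since both $w$ and $y$ then solve the same well-posed initial value problem (Picard--Lindel\"of applied to the equivalent first-order system, valid by continuity of $f$), one concludes $w\equiv y$ on $[a,b]$.

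The verification proceeds in two stages. First, the initial conditions: at $x=a$ the integral vanishes, giving $w(a)=y(a)$; a single application of Leibniz's rule, noting that $\sin(0)=0$ kills the boundary contribution at $u=x$, gives $w'(a)=y'(a)$. Second, differentiating a further time, the integral term produces a boundary contribution $\cos(0)f(x)=f(x)$ together with a volume term of the form $-\lambda\int_a^x\sin(\lambda(x-u))f(u)\,du$. Combining these with $\lambda^2 w$ and collecting like terms yields $w''(x)+\lambda^2 w(x)=f(x)$, as required.

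An attractive, more constructive alternative is to derive the formula rather than guess it: multiply the equation $y''(u)+\lambda^2 y(u)=f(u)$ by the Green's kernel $\sin(\lambda(x-u))$, integrate over $[a,x]$ in the variable $u$, and then integrate by parts twice. The term $\lambda^2\int_a^x\sin(\lambda(x-u))y(u)\,du$ produced on the right by the second integration by parts cancels the corresponding term on the left, and the boundary contributions at $u=a$ and $u=x$ reassemble into the initial-data part of the representation. This route also makes transparent that Theorem~\ref{preliminaries:helmholtz:theorem2} is the finite-interval, initial-value counterpart of Theorem~\ref{preliminaries:helmholtz:theorem1}, built from the same sinusoidal Green's kernel.

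The only real obstacle is careful bookkeeping of signs, boundary terms, and the distinction between differentiation in $x$ versus $u$ when applying Leibniz's rule or integrating by parts; there are no analytic subtleties, since the continuity of $f$ and the $C^2$ regularity of $y$ justify every manipulation. I therefore expect the proof itself to be short, with the bulk of the writing devoted to displaying the differentiation of the convolution-type integral cleanly.
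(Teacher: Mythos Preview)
Your overall strategy---define $w(x)$ to be the right-hand side, verify $w''+\lambda^2 w=f$ with $w(a)=y(a)$, $w'(a)=y'(a)$, then appeal to uniqueness---is the right one, and indeed the paper itself supplies no argument at all: it simply cites \cite{Coddington-Levinson}. So there is no ``paper's proof'' to compare against.

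However, your verification contains a genuine error that, once fixed, reveals that the formula as printed is itself wrong. You correctly compute that the integral term contributes $f(x)-\lambda\int_a^x\sin(\lambda(x-u))f(u)\,du$ to $w''(x)$, but you then forget that the affine piece $y(a)+y'(a)(x-a)$, while having zero second derivative, contributes $\lambda^2\bigl(y(a)+y'(a)(x-a)\bigr)$ to $\lambda^2 w(x)$. Collecting terms gives
\[
w''(x)+\lambda^2 w(x)=f(x)+\lambda^2\bigl(y(a)+y'(a)(x-a)\bigr),
\]
which equals $f(x)$ only when $y(a)=y'(a)=0$. The correct variation-of-constants representation has the homogeneous part
\[
y(a)\cos\bigl(\lambda(x-a)\bigr)+\frac{y'(a)}{\lambda}\sin\bigl(\lambda(x-a)\bigr)
\]
in place of $y(a)+y'(a)(x-a)$; with that correction your argument goes through verbatim. (In the paper's sole application of this theorem, in the proof of Theorem~\ref{continuity:theorem2}, the initial data $\tau(x_0)-\gamma(x_0)$ and $\tau'(x_0)-\gamma'(x_0)$ are both zero, so the typo is harmless there.)
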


\label{section:preliminaries:helmholtz}
\end{subsection}

%
%
\begin{subsection}{Schwarzian derivatives}
The Schwarzian derivative of a smooth function $f: \mathbb{R} \to \mathbb{R}$ 
is
\begin{equation}
\{f,t\} = \frac{f'''(t)}{f'(t)} - \frac{3}{2} \left(\frac{f''(t)}{f'(t)}\right)^2.
\end{equation}
The Schwarzian derivative of $x(t)$
is related to the Schwarzian derivative of its inverse $t(x)$ through
the formula
\begin{equation}
\{x,t\} = - \left(\frac{dx}{dt}\right)^2\{t,x\}.
\label{preliminaries:Schwarzian_derivative:change_of_vars}
\end{equation}
This identity
can be found, for instance, in Section 1.13 of \cite{NISTHandbook}.

\label{section:preliminaries:schwarzian_derivative}
\end{subsection}

\begin{subsection}{Gronwall's inequality}
The following well-known inequality can be found in, for instance,  \cite{Bellman}.
\vskip 1em
\begin{theorem}
Suppose that $f$ and $g$ are continuous functions on the interval $[a,b]$ such that
\begin{equation}
f(t) \geq 0 \ \ \mbox{and}\ \ g(t) \geq 0 \ \ \ \mbox{for all}\ \ a \leq t \leq b.
\end{equation}
Suppose further that there exists a real number $C>0$ such that
\begin{equation}
f(t) \leq C + \int_a^t f(s)g(s)\ ds \ \ \ \mbox{for all}\ \ a \leq t \leq b.
\end{equation}
Then
\begin{equation}
f(t) \leq C \exp\left(\int_a^t g(s)\ ds\right)\ \ \ \mbox{for all}\ \ a \leq t \leq b.
\end{equation}
\label{preliminaries:gronwall:theorem1}
\end{theorem}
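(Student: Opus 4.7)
The plan is to convert the integral inequality into a differential inequality by introducing the auxiliary function $F(t) = C + \int_a^t f(s) g(s)\, ds$. Since $f$ and $g$ are continuous on $[a,b]$, the fundamental theorem of calculus gives that $F$ is continuously differentiable with $F'(t) = f(t) g(t)$ and $F(a) = C$. The hypothesis of the theorem is precisely the statement $f(t) \leq F(t)$, and combining this with $g(t) \geq 0$ yields the pointwise bound
$$F'(t) \leq g(t)\, F(t) \qquad \text{for all } a \leq t \leq b.$$

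From here I would proceed via an integrating factor. Multiplying the differential inequality by the strictly positive quantity $\exp\left(-\int_a^t g(s)\, ds\right)$, I obtain
$$\frac{d}{dt}\left[F(t)\exp\left(-\int_a^t g(s)\, ds\right)\right] = \bigl(F'(t) - g(t) F(t)\bigr)\exp\left(-\int_a^t g(s)\, ds\right) \leq 0,$$
so the bracketed function is nonincreasing on $[a,b]$. Evaluating at $t = a$ shows that its value never exceeds $F(a) = C$, and rearranging gives
$$F(t) \leq C \exp\left(\int_a^t g(s)\, ds\right).$$
The conclusion follows by chaining $f(t) \leq F(t)$ with this bound.

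There is essentially no serious obstacle here; this is the classical argument and every step is a one-line verification. The only subtlety worth flagging is the role of the sign hypotheses: nonnegativity of $f$ and $g$ is used to ensure that the hypothesis $f \leq F$ combined with $g \geq 0$ preserves the direction of the inequality when one multiplies by $g$, and also (in an alternative route) that $F(t) \geq C > 0$ throughout $[a,b]$, which would legitimate dividing by $F$ and integrating $F'/F \leq g$ directly. The integrating-factor formulation written above sidesteps the need to verify positivity of $F$ explicitly, since $\exp\bigl(-\int_a^t g\bigr)$ is automatically positive irrespective of any sign considerations.
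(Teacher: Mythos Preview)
Your argument is correct and is the standard integrating-factor proof of Gronwall's inequality. The paper does not supply its own proof of this statement; it merely records the result and cites \cite{Bellman}, so there is no in-paper argument to compare against.
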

\label{section:preliminaries:gronwall}
\end{subsection}

%
%
\begin{subsection}{The Lambert {$W$} function}
The Lambert $W$ function or product logarithm is the multiple-valued
inverse of the function
\begin{equation}
f(z) = z \exp(z).
\end{equation}
We follow \cite{knuth} in using $W_{0}$ to denote 
the branch of $W$ which is real-valued and greater than or equal to 
$-1$ on the interval $[-1/e,\infty)$.
The following elementary fact concerning $W_0$ can be found in \cite{knuth}.

\vskip 1em
\begin{theorem}
Suppose that $y \geq -1/e$ is a real number.  Then
\begin{equation}
x\exp(x) \leq y 
\ \ \ \mbox{if and only if}\ \ \
x \leq W_0(y).
\end{equation}
\label{preliminaries:LambertW:theorem1}
\end{theorem}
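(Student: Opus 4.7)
The plan is to deduce the equivalence from the monotonicity of the real function $f(x) = x \exp(x)$. First I would compute $f'(x) = (1+x)\exp(x)$, which vanishes only at $x = -1$ and is positive on $(-1, \infty)$; hence $f$ restricts to a strictly increasing bijection of $[-1, \infty)$ onto $[f(-1), \infty) = [-1/e, \infty)$. The other branch $(-\infty, -1]$, on which $f$ is decreasing from $0^{-}$ down to $-1/e$, is not needed because $W_0$ was defined (in the paragraph just before the theorem) as the branch of the inverse taking values in $[-1, \infty)$; thus $W_0:[-1/e,\infty)\to[-1,\infty)$ is precisely the inverse of the above restriction and satisfies $f(W_0(y)) = y$ and $W_0(y) \geq -1$ for every admissible $y$.

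With this setup in hand, the theorem is essentially a restatement of the monotone inverse relationship. For any $y \geq -1/e$ and any $x \geq -1$, the strict monotonicity of $f$ on $[-1,\infty)$ gives
\[
x \leq W_0(y) \iff f(x) \leq f(W_0(y)) = y \iff x\exp(x) \leq y,
\]
which is the desired equivalence on this branch. The only remaining point is to handle $x < -1$: then $x \leq W_0(y)$ holds automatically because $W_0(y) \geq -1 > x$, so the equivalence reduces to the automatic statement $x\exp(x) \leq y$, which indeed holds in the regime in which the paper applies the theorem (where $y$ is a nonnegative quantity, so that $x\exp(x) < 0 \leq y$ for any $x < 0$).

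I do not anticipate any substantive obstacle; the theorem is a direct corollary of $W_0$ having been defined as the inverse on the branch where $f$ is monotone, and the only step that asks for even a moment of care is the bookkeeping about which branch of $f$ the argument $x$ lives on. No ingredient beyond elementary single-variable calculus and the definition of $W_0$ recalled above will be used.
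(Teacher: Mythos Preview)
The paper does not give its own proof of this statement; it simply cites it as an elementary fact found in \cite{knuth}. So there is no proof in the paper to compare against.

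Your argument via the monotonicity of $f(x)=xe^x$ on $[-1,\infty)$ is exactly the standard one and is correct for $x\ge -1$: on that branch $f$ is a strictly increasing bijection onto $[-1/e,\infty)$ with inverse $W_0$, and the equivalence follows immediately.

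You are also right to flag the case $x<-1$ as the one point needing care, and in fact the statement as printed is not literally true there. A concrete counterexample: take $y=-1/e$ and $x=-2$. Then $W_0(y)=-1$, so $x\le W_0(y)$ holds, yet $x e^x=-2e^{-2}\approx -0.27>-1/e$, so $x e^x\le y$ fails. Thus the ``if and only if'' cannot hold for all real $x$ when $-1/e\le y<0$. Your proposed fix---restrict to $y\ge 0$ so that $x e^x<0\le y$ is automatic for $x<-1$---is exactly the right remedy, and it matches how the paper actually invokes the result: in the proof of Theorem~\ref{bound:theorem5} the role of $y$ is played by $C_1\mu>0$ and the role of $x$ by $\mu\lambda/2>0$, so both the restriction $y\ge 0$ and the restriction $x\ge -1$ are comfortably satisfied. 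In short, your proof is correct for the version of the statement that the paper needs, and you have correctly identified (rather than overlooked) the range on which the printed statement fails.
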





\label{section:preliminaries:LambertW}
\end{subsection}

%
%

\begin{subsection}{The Picard-Lindel\"of Theorem}
The following well-known theorem  can be found in \cite{Coddington-Levinson},
among many other sources.

\vskip 1em
\begin{theorem}
Suppose that  $\Omega$ is a domain in $\mathbb{R}^n$, that 
the function $f:[t_0,t_1] \times \Omega  \to \mathbb{R}^n$ is continuous,
and that there exists a $M$ such that
\begin{equation}
\left|f(t,x) - f(t,y)\right| \leq M |x-y|
\end{equation}
for all $x,y \in \Omega$ and $t \in [t_0,t_1]$.  Suppose also that
$t_0 \leq t \leq t_1$, and that $y_0$ is an element of $\Omega$.  
Then there exist a positive real number $\epsilon$,
and a differentiable function $y:[t_0,t_0+\epsilon] \to \mathbb{C}$
such that
\begin{equation}
y'(t) = f(t,y(t))
\ \ \ \mbox{for all}\ \ t_0 \leq t \leq t_0 + \epsilon,
\end{equation}
and $y(t_0) = y_0$.
\label{preliminaries:picard:theorem1}
\end{theorem}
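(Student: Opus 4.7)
The plan is to recast the initial-value problem as a fixed-point equation for an integral operator on a suitable Banach space, and then apply the Banach contraction mapping theorem. Specifically, a continuously differentiable function $y$ with $y(t_0)=y_0$ satisfies $y'(t)=f(t,y(t))$ on $[t_0,t_0+\epsilon]$ if and only if it satisfies the integral equation
\begin{equation}
y(t) = y_0 + \int_{t_0}^t f(s,y(s))\ ds
\end{equation}
on that interval, by the fundamental theorem of calculus (using that $s\mapsto f(s,y(s))$ is continuous whenever $y$ is). So it suffices to exhibit a continuous fixed point of the operator $(Ty)(t) = y_0 + \int_{t_0}^t f(s,y(s))\ ds$.

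Since $\Omega$ is open and $y_0\in\Omega$, I would first choose $r>0$ so that the closed ball $\overline{B}(y_0,r)$ is contained in $\Omega$. The map $f$ is then continuous on the compact set $[t_0,t_1]\times\overline{B}(y_0,r)$, hence bounded there by some constant $K$. I would then pick $\epsilon>0$ satisfying the three conditions $t_0+\epsilon\le t_1$, $K\epsilon\le r$, and $M\epsilon<1$, and work on the complete metric space
\begin{equation}
X = \bigl\{y\in C([t_0,t_0+\epsilon],\mathbb{R}^n) : \|y-y_0\|_\infty \le r\bigr\}
\end{equation}
equipped with the uniform norm (it is closed in a Banach space, hence complete). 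The second condition on $\epsilon$ guarantees that $T$ maps $X$ into itself, since for $y\in X$ and $t\in[t_0,t_0+\epsilon]$,
\begin{equation}
|(Ty)(t)-y_0| \le \int_{t_0}^t |f(s,y(s))|\ ds \le K\epsilon \le r.
\end{equation}

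Next, the Lipschitz hypothesis gives, for $y,z\in X$,
\begin{equation}
|(Ty)(t)-(Tz)(t)| \le \int_{t_0}^t M|y(s)-z(s)|\ ds \le M\epsilon\,\|y-z\|_\infty,
\end{equation}
so $T$ is a strict contraction on $X$ by the third condition on $\epsilon$. The Banach fixed point theorem then produces a unique $y\in X$ with $Ty=y$. Since $s\mapsto f(s,y(s))$ is continuous, the fundamental theorem of calculus shows that $y$ is differentiable on $[t_0,t_0+\epsilon]$ with $y'(t)=f(t,y(t))$, and clearly $y(t_0)=y_0$.

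The only real subtlety is the simultaneous selection of $\epsilon$ and $r$: one needs to first shrink $r$ so that $\overline{B}(y_0,r)\subset\Omega$, extract the local bound $K$ on $f$ from continuity on a compact set, and only then choose $\epsilon$ small enough to meet both $K\epsilon\le r$ (invariance of $X$) and $M\epsilon<1$ (contraction). Once these are in hand, the rest of the argument is routine, and the step from the integral equation back to the ODE is immediate because $f(\cdot,y(\cdot))$ is continuous.
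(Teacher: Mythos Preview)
Your argument is correct and is the standard Picard iteration proof via the Banach fixed-point theorem. The paper does not actually supply a proof of this theorem; it merely states it as a well-known preliminary result and cites Coddington--Levinson, whose proof proceeds along essentially the same lines as yours.
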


\label{section:preliminaries:picard}
\end{subsection}

%
%

\begin{subsection}{Chebyshev polynomials and interpolation}

The $(m+1)$-point Chebyshev grid on the interval $[-1,1]$ is the set 
\begin{equation}
\left\{
\cos\left(\frac{\pi j}{m}\right) :  \ \ j=0,1,\ldots,m
\right\}.
\label{preliminaries:chebyshev:nodes}
\end{equation}
We refer to each element of (\ref{preliminaries:chebyshev:nodes})
as a Chebyshev node or point.

Suppose that $f:[-1,1]\to\mathbb{R}$ is a Lipschitz continuous function.  For
each integer $m$, there exists a unique polynomial of degree $m$
which agrees with $f$ on the $(m+1)$-point Chebyshev grid.
We refer to this polynomial as the $m^{th}$ order Chebyshev interpolant for $f$ and denote 
it  by $\Psi_m\left[f\right]$.  In other words, $\Psi_m\left[f\right]$ is 
the polynomial of degree $m$ defined by the requirement
that
\begin{equation}
\Psi_m\left[f\right]
\left(\cos\left(\frac{\pi j}{m}\right)\right) = f\left(\cos\left(\frac{\pi j}{m}\right)\right)
\end{equation}
for $j=0,1,2,\ldots,m$.  Moreover,
$\Psi_m\left[f\right]$ converges to $f$ in $L^\infty\left(\left[-1,1\right]\right)$
norm as $m \to \infty$.
The following two theorems  provide 
estimates on the rate of convergence of $\Psi_m\left[f\right]$
in $L^\infty\left(\left[-1,1\right]\right)$
under additional assumptions on the function $f$.
The observation that $\Psi_m\left[f\right] \to f$ when 
$f$ is Lipschitz continuous and proofs of the following two theorems can be
found in  \cite{Trefethen}, among many other sources.
\vskip 1em
\begin{theorem}
Suppose that $p$ is a positive integer and 
$f \in C^p\left(\left[-1,1\right]\right)$.  Then 
\begin{equation}
\left\|
\Psi_m\left[f\right] - f 
\right\|_\infty
= O \left(m^{-p}\right).
\end{equation}
\label{preliminaries:chebyshev:theorem1}
\end{theorem}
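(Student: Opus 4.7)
The plan is to reduce the statement to an estimate on the decay of the Chebyshev coefficients of $f$, and then combine that decay with the standard aliasing identity relating a function's continuous Chebyshev expansion to the discrete one produced by interpolation on the $(m+1)$-point Chebyshev grid.

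First I would introduce the trigonometric substitution $x = \cos\theta$, which turns $f:[-1,1]\to\mathbb{R}$ into $g(\theta)=f(\cos\theta)$, an even $2\pi$-periodic function on $\mathbb{R}$. The Chebyshev expansion
\begin{equation}
f(x) = \tfrac{1}{2}a_0 + \sum_{k=1}^\infty a_k T_k(x)
\end{equation}
becomes the cosine Fourier series of $g$, with
\begin{equation}
a_k = \frac{2}{\pi}\int_0^\pi g(\theta)\cos(k\theta)\,d\theta.
\end{equation}
Because $f \in C^p([-1,1])$, the chain rule gives $g \in C^p([0,\pi])$. Integrating by parts $p$ times and exploiting the periodicity (and the vanishing of the boundary terms, which is automatic for the even extension of $g$ at $0$ and $\pi$) I would obtain the bound
\begin{equation}
|a_k| \le \frac{C}{k^p}\qquad\text{for all } k\ge 1,
\end{equation}
where $C$ depends only on $\|g^{(p)}\|_\infty$, and hence only on $f$ and its first $p$ derivatives.

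Next I would invoke the aliasing identity for the $(m+1)$-point Chebyshev grid: if $c_0,\dots,c_m$ are the coefficients of $\Psi_m[f]$ in the Chebyshev basis, then
\begin{equation}
c_k = a_k + \sum_{j=1}^\infty \bigl(a_{2jm-k}+a_{2jm+k}\bigr),\qquad 0\le k\le m,
\end{equation}
with an appropriate modification at the endpoints $k=0$ and $k=m$. Combining this with the continuous expansion of $f$ and using $|T_k(x)|\le 1$ gives a pointwise bound
\begin{equation}
\bigl|\Psi_m[f](x)-f(x)\bigr| \le 2\sum_{k=m+1}^\infty |a_k|,
\end{equation}
uniformly in $x\in[-1,1]$. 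The coefficient decay from the first step then controls this tail by a constant multiple of $m^{-p}$ (one actually obtains the slightly stronger statement $O(m^{-p})$ from the refined Chebyshev-coefficient bound in Theorem 7.2 of \cite{Trefethen}, which applies because the $p$-th derivative of $f$, being continuous on a compact interval, is of bounded variation).

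The main obstacle is the constant-factor sharpness in the coefficient decay: a naive integration-by-parts argument delivers $|a_k|=O(k^{-p})$ but summing this tail only produces $O(m^{-p+1})$, whereas the stated rate $O(m^{-p})$ requires either the sharper bound $|a_k|=O(k^{-p-1})$ coming from the bounded-variation version of the estimate, or equivalently an application of Jackson's theorem together with the $O(\log m)$ growth of the Chebyshev Lebesgue constant and an absorption of the logarithm into an adjusted $p$. Either route is routine, and the paper defers the detailed argument to \cite{Trefethen}.
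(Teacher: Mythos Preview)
The paper does not give its own proof of this statement; it simply cites \cite{Trefethen}. Your sketch (aliasing identity combined with decay of Chebyshev coefficients obtained by integration by parts) is precisely the argument developed there, so in that sense you are aligned with the paper's intended source.

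There is, however, a genuine gap in your final step. You assert that ``the $p$-th derivative of $f$, being continuous on a compact interval, is of bounded variation,'' and invoke Trefethen's Theorem~7.2 on that basis. This implication is false: a continuous function on $[-1,1]$ need not have bounded variation (take $x\sin(1/x)$ extended continuously to $0$). With only $f\in C^p$ you may apply the bounded-variation coefficient estimate with index $p-1$ (since $f^{(p-1)}$ is $C^1$ and hence of bounded variation), which gives $|a_k|=O(k^{-p})$ and, after summing the tail, only $\|\Psi_m[f]-f\|_\infty=O(m^{-p+1})$ --- exactly the shortfall you yourself flagged. Your alternative route through Jackson's theorem and the $O(\log m)$ Chebyshev Lebesgue constant produces $O(m^{-p}\log m)$, and the logarithm cannot be ``absorbed'' into $O(m^{-p})$ without weakening the exponent. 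In fact the statement as the paper records it is itself slightly loose: the sharp $O(m^{-p})$ bound in \cite{Trefethen} requires $f^{(p)}$ to be of bounded variation, not merely continuous, so under the bare hypothesis $f\in C^p$ no argument will close the gap you identified.
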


\begin{theorem}
Suppose that $f$ is analytic on an ellipse with foci $\pm 1$
the sum of whose semiaxes is $\rho$.  Then
\begin{equation}
\left\|
\Psi_m\left[f\right] - f 
\right\|_\infty
= O \left(\rho^{-m}\right).
\end{equation}
\label{preliminaries:chebyshev:theorem2}
\end{theorem}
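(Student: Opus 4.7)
The plan is to transplant the Chebyshev interpolation problem on $[-1,1]$ to a cosine interpolation problem on $[0,\pi]$, where analyticity of $f$ on the Bernstein ellipse translates into analyticity of the transplanted function on a horizontal strip; the geometric decay of Fourier coefficients in such a strip then supplies essentially all of the work. Under the substitution $x = \cos\theta$ the Chebyshev nodes $\cos(\pi j/m)$ become the equispaced angles $\theta_j = \pi j/m$, and since $T_k(\cos\theta) = \cos(k\theta)$ the polynomial $\Psi_m[f](\cos\theta)$ is, viewed as a function of $\theta$, the unique even cosine polynomial of degree $\leq m$ interpolating $F(\theta):=f(\cos\theta)$ at the $\theta_j$.

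First I would invoke the Joukowski map $x = \tfrac12(w+w^{-1})$, which carries the circle $|w|=\rho$ bijectively onto the Bernstein ellipse with foci $\pm 1$ and sum of semiaxes $\rho$. Setting $w = e^{i\theta}$, the hypothesis on $f$ implies that $F(\theta)$ extends holomorphically from $\mathbb{R}$ to the open strip $|\mathrm{Im}\,\theta| < \log\rho$, and is even and $2\pi$-periodic there. Consequently $F$ admits a Fourier cosine expansion $F(\theta) = \sum_{k=0}^\infty a_k \cos(k\theta)$ whose coefficients obey a Cauchy estimate on the horizontal contour $\mathrm{Im}\,\theta = \log\rho'$ for any $1 < \rho' < \rho$, namely $|a_k| \leq M(\rho')\,(\rho')^{-k}$, with $M(\rho')$ controlled by the supremum of $|F|$ on that contour.

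Next I would apply the aliasing identity for cosine interpolation at the nodes $\theta_j$: because $\cos(k\theta_j) = \cos(k'\theta_j)$ whenever $k \equiv \pm k' \pmod{2m}$, each mode $\cos(k\cdot)$ with $k > m$ coincides on the node set with a unique $\cos(k'(k)\cdot)$ having $k'(k) \in \{0,1,\ldots,m\}$. Linearity of the interpolation operator then yields
\begin{equation*}
f(\cos\theta) - \Psi_m[f](\cos\theta) = \sum_{k=m+1}^\infty a_k \bigl(\cos(k\theta) - \cos(k'(k)\,\theta)\bigr),
\end{equation*}
whose supremum is at most $2\sum_{k=m+1}^\infty |a_k| = O((\rho')^{-m})$. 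Since this holds for every $\rho'$ strictly less than $\rho$, the claimed $O(\rho^{-m})$ decay follows.

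The main obstacle will be the aliasing bookkeeping at the endpoint nodes $\theta_0 = 0$ and $\theta_m = \pi$, where the Chebyshev-Lobatto grid differs from the Gauss-Chebyshev grid and one must verify by inspection that $\cos(k'(k)\cdot)$ agrees with $\cos(k\cdot)$ on the entire node set (including these endpoints) before concluding that the interpolant of $\cos(k\cdot)$ is exactly $\cos(k'(k)\cdot)$. Everything else is routine: the analytic continuation is immediate from the ellipse--annulus correspondence, and once the coefficients $a_k$ are known to decay geometrically the conclusion follows by summation.
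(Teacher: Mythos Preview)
Your argument is correct and is in fact the standard proof of this result; but note that the paper does not supply its own proof of this theorem at all. It is stated in the preliminaries section with the remark that ``proofs of the following two theorems can be found in \cite{Trefethen}, among many other sources.'' Your sketch---transplanting via $x=\cos\theta$, invoking the Joukowski correspondence between the Bernstein ellipse and a horizontal strip to obtain geometric decay of the Chebyshev coefficients, and then using the aliasing identity on the Lobatto grid to control the interpolation error---is precisely the argument Trefethen gives, so there is nothing to contrast.

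One small remark on your final sentence: from $O((\rho')^{-m})$ for every $\rho'<\rho$ one cannot formally deduce $O(\rho^{-m})$ (consider $m\rho^{-m}$). The clean way out is to use that $f$ is bounded on the ellipse of parameter $\rho$ itself, so the Cauchy estimate yields $|a_k|\le 2M\rho^{-k}$ directly and the tail sum is $\le \dfrac{4M}{\rho-1}\,\rho^{-m}$. This is how Trefethen states the hypothesis, and it removes the need for the $\rho'\uparrow\rho$ limiting step. Your worry about endpoint aliasing on the Lobatto grid is unfounded: the folding rule $k\mapsto k'(k)$ does match $\cos(k\theta_j)$ with $\cos(k'(k)\theta_j)$ at $j=0$ and $j=m$ as well, since $\cos(0)=1$ and $\cos(k\pi)=(-1)^k$ are preserved under $k\mapsto 2m-k$ and $k\mapsto k+2m$.
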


Given the values of $f$ on the $(m+1)$-point Chebyshev grid
$x_0,x_1,\ldots,x_m$, the value of $\Psi\left[f\right]$ can be calculated
at any point $x$ in $[-1,1]$ via the formula
\begin{equation}
\Psi\left[f\right](x) = 
\left(\sum_{j=0}^m \frac{(-1)^j f(x_j)}{x-x_j}\right)
\Big/
\left(\sum_{j=0}^m \frac{(-1)^j}{x-x_j}\right).
\label{preliminaries:chebyshev:barycentric}
\end{equation}
The process of approximating a function $f$ via $\Phi\left[f\right]$
 is referred to 
as Chebyshev interpolation and (\ref{preliminaries:chebyshev:barycentric})
is known as the barycentric interpolation formula for Chebyshev polynomials.
The stability of barycentric interpolation is discussed extensively in \cite{Trefethen}.

Suppose that $f:[-1,1]\to\mathbb{R}$ is continuous function, 
that $m$ is a positive integer, and that 
 $g$ is the function defined by the formula
\begin{equation}
g(t) = \int_{-1}^t \Psi_m\left[f\right](u)\ du.
\end{equation}
If $\alpha = \{\alpha_0,\alpha_1,\ldots,\alpha_m\}$ is the vector
defined by the formula
\begin{equation}
\alpha_j =  f\left(\cos\left(\frac{\pi j}{m}\right)\right)
\end{equation}
and $\beta = \{\beta_0,\beta_1,\ldots,\beta_m\}$ is the vector
defined by the formula
\begin{equation}
\beta_j = g\left(\cos\left(\frac{\pi j}{m}\right)\right),
\end{equation}
then we refer to the  $(m+1)\times (m+1)$ matrix $S_m$ such that
\begin{equation}
S_m \alpha = \beta
\end{equation}
as the spectral integration matrix of order $m$
(that such a matrix exists is clear since the underlying operation
is linear).  

The preceding constructions can be easily modified 
in order to accommodate functions defined
on any finite interval $[a,b]$.  For instance,  if we denote
the points in the $(m+1)$-point Chebyshev grid on $[-1,1]$
by $\rho_0,\rho_1,\ldots,\rho_m$, then the $(m+1)$-point Chebyshev grid on the interval $[a,b]$ is 
the set
\begin{equation}
\left\{
\frac{a-b}{2}\ \rho_j + \frac{a+b}{2} :\ 
j=0,1\ldots,m.
\right\}
\end{equation}
\vskip 1em
\begin{remark}
The set (\ref{preliminaries:chebyshev:nodes})
is the collection  of  the extreme points of the $m^{th}$ order
Chebyshev polynomial $T_m$.   The roots of Chebyshev polynomials are often
used as interpolation nodes instead.  There are few meaningful differences
between these two choices, although
(\ref{preliminaries:chebyshev:nodes}) 
includes the endpoints $\pm 1$, which is convenient when solving
boundary value problems for ordinary differential equations.
\end{remark}

\label{section:preliminaries:chebyshev}
\end{subsection}

\label{section:preliminaries}
\end{section}

\begin{section}{Analytical apparatus}

Here we develop the analytic apparatus used in Section~\ref{section:algorithm}
to design an algorithm for the numerical solution of second order linear ordinary
differential equations of the form (\ref{introduction:second_order}) whose
running time is independent of the parameter $\lambda$.

In Section~\ref{section:kummer}, we reformulate Kummer's equation as a nonlinear
integral equation in preparation for a statement of the main theorem
of \cite{Bremer-Rokhlin}.  
This is done in Section~\ref{section:nonoscillatory},
and several consequences of this result are discussed there.   
In Section~\ref{section:bound}, we develop a theorem which bounds
the restriction of the  solution $r$ of the logarithm form of Kummer's equation 
to an interval of the form $(-\infty,x_0]$
under the assumption that the coefficient $q$ is nearly equal to $1$ there.
This result is recorded as Theorem~\ref{bound:theorem5}.
In Section~\ref{section:continuity}, we use standard techniques
from the theory of ordinary differential equations in order
to bound the change in the solution of 
the logarithm form of Kummer's equation when the initial
conditions and coefficient $q$ are perturbed.  

\begin{subsection}{Integral equation formulation of Kummer's equation}

In this section, we reformulate Kummer's equation 
\begin{equation}
\left(\alpha'(t)\right)^2 = \lambda^2q(t) - \frac{1}{2}\frac{\alpha'''(t)}{\alpha'(t)}
+ \frac{3}{4} \left(\frac{\alpha''(t)}{\alpha'(t)}\right)^2
\label{kummer:kummers_equation}
\end{equation}
as a nonlinear integral equation in preparation for the statement
of the principal result of \cite{Bremer-Rokhlin}.    We assume that the function $q$ 
has been extended to the real line.

By letting
\begin{equation}
\alpha'(t) = \lambda \exp\left(\frac{r(t)}{2}\right)
\label{kummer:relation}
\end{equation}
in (\ref{kummer:kummers_equation}) we obtain 
\begin{equation}
r''(t) - \frac{1}{4}\left(r'(t)\right)^2 + 4 \lambda^2\left( \exp(r(t)) - q(t)\right) = 0,
\label{kummer:logarithm_form}
\end{equation}
which we refer to as the logarithm form of Kummer's equation.  
Representing the solution $r$ of (\ref{kummer:logarithm_form})
in the form
\begin{equation}
r(t) = \log(q(t)) + \delta(t)
\label{kummer:delta}
\end{equation} 
results in the equation
\begin{equation}
\delta''(t) - \frac{1}{2} \frac{q'(t)}{q(t)} \delta'(t) 
- \frac{1}{4} \left(\delta'(t)\right)^2  
+ 4\lambda^2 q(t) \left(\exp(\delta(t)-1)\right)
=
q(t) p(t),
\label{kummer:eq1}
\end{equation}
where the function $p$ is defined by  the formula
\begin{equation}
p(t) = 
\frac{1}{q(t)}
\left(
\frac{5}{4} \left(\frac{q'(t)}{q(t)}\right)^2
-
\frac{q''(t)}{q(t)}
\right).
\label{kummer:p}
\end{equation}
By expanding the exponential in (\ref{kummer:eq1})
in a power series and rearranging terms we obtain
\begin{equation}
\begin{aligned}
\delta''(t)  -  \frac{1}{2} \frac{q'(t)}{q(t)} \delta'(t) 
+ 4\lambda^2 q(t)\delta(t)
- \frac{1}{4}\left(\delta'(t)\right)^2
+ 4\lambda^2 q(t) 
\left(
\frac{\left(\delta(t)\right)^2}{2!} + 
\frac{\left(\delta(t)\right)^3}{3!} + 
\cdots\right)
= q(t)p(t).
\end{aligned}
\label{kummer:eq2}
\end{equation}
The change of variables
\begin{equation}
x(t) = \int_a^t \sqrt{q(u)}\ du
\label{kummer:kummer_change_of_vars}
\end{equation}
transforms (\ref{kummer:eq2}) into
\begin{equation}
\delta''(x) + 4\lambda^2 \delta(x) 
=
S\left[\delta\right](x) + p(x),
\label{kummer:deltaeq}
\end{equation}
where $S$ is the nonlinear differential operator defined via the formula
\begin{equation}
S\left[f\right](x) 
= \frac{\left(f'(x)\right)^2}{4}
- 4 \lambda^2\left(
\frac{\left(f(x)\right)^2}{2!} + 
\frac{\left(f(x)\right)^3}{3!} + 
\cdots
\right).
\label{kummer:S}
\end{equation}

We observe that the function $p(t)$ defined in formula
(\ref{kummer:p}) is related
to the Schwarzian derivative  (see Section~\ref{section:preliminaries:schwarzian_derivative})
$\{x,t\}$ of the function  $x$ defined in
(\ref{kummer:kummer_change_of_vars})
 via the formula 
\begin{equation}
 p(t) =  -\frac{2}{q(t)} \{x,t\}
=
-2 \left(\frac{dt}{dx}\right)^2 \{x,t\}.
\label{kummer:pscw}
\end{equation}
From (\ref{kummer:pscw}) and  Formula~(\ref{preliminaries:Schwarzian_derivative:change_of_vars})
in Section~\ref{section:preliminaries:schwarzian_derivative},
we see that
\begin{equation}
p(x) = 2 \{ t, x \};
\end{equation}
that is the function $p(x)$ is twice the Schwarzian derivative of 
the inverse of the function $x(t)$.

We also observe that the differential operator appearing on the
left-hand side of (\ref{kummer:deltaeq}) 
is the constant coefficient Helmholtz equation.
In order to exploit this observation,
we define the operator $T$ for functions $f \in \Sr$
via the formula
\begin{equation}
T\left[f\right](x) 
= \frac{1}{4\lambda} \int_{-\infty}^\infty\sin \left(2\lambda \left|x-y\right|\right) f(y)\ dy
\ \ \ \mbox{for all} \ \ x\in\mathbb{R}.
\label{kummer:T}
\end{equation}
According to Theorem~\ref{preliminaries:helmholtz:theorem1}, 
$T\left[f\right]$ is the unique solution of the ordinary differential equation
\begin{equation}
 y''(x) + 4 \lambda^2 y(x)  = f(x)
\label{kummer:helmholtz}
\end{equation}
such that
\begin{equation}
\widehat{T\left[f\right]}(\xi) = 
\frac{\widehat{f}(\xi)}{4\lambda^2-\xi^2}.
\label{kummer:That}
\end{equation}
Consequently, introducing the representation
\begin{equation}
\delta(x) = T\left[\sigma\right](x)
\end{equation}
into (\ref{kummer:deltaeq}) results in the nonlinear
integral equation
\begin{equation}
\sigma(x) = S \left[ T \left[ \sigma \right]\right](x)  + p(x).
\label{kummer:inteq}
\end{equation}

\label{section:kummer}
\end{subsection}

%
%

\begin{subsection}{Nonoscillatory solutions of Kummer's equation}

Equation (\ref{kummer:inteq}) does not admit solutions for all functions $p$.  
However,  according to the following result, which appears
as Theorem~12 in  \cite{Bremer-Rokhlin},
if the function $p$ is nonoscillatory
then there exists a function $\nu$ of small magnitude
 such that the nonlinear integral equation
\begin{equation}
\sigma(x) = S\left[T\left[\sigma\right]\right] + p(x) + \nu(x)
\label{nonoscillatory:perturbedinteq}
\end{equation}
admits a solution $\sigma$ which is also nonoscillatory.

%
%
\vskip 1em
\begin{theorem}
Suppose that  $q \in C^\infty\left(\mathbb{R}\right)$ is strictly positive,
that $x(t)$ is defined by the formula
\begin{equation}
x(t) = \int_0^t \sqrt{q(u)}\ du,
\label{main_theorem:x}
\end{equation}
and that the function $p$ defined via the formula
\begin{equation}
p(x) = 2\{t,x\}
\end{equation}
is an element of $\Sr$. Suppose further that there exist positive real numbers
$\lambda$, $\Gamma$ and $\mu$ such that
\begin{equation}
  \lambda \geq  4\max\left\{\frac{1}{\mu},\Gamma\right\}
\label{main_theorem:lambda}
\end{equation}
and
\begin{equation}
\left|\widehat{p}(\xi)\right| \leq 
\Gamma 
\exp\left(-\mu\left|\xi\right|\right)
\ \ \ \mbox{for all}\ \ \xi\in\mathbb{R}.
\end{equation}
Then there exist functions $\sigma$ and $\nu$  in $\Sr$ 
such that $\sigma$ is a solution of the nonlinear integral equation
\begin{equation}
\sigma(x) = S\left[T\left[\sigma\right]\right](x) + p(x) + \nu(x),
\ \ \ \mbox{for all}\ \ x\in\mathbb{R},
\label{main_theorem:inteq}
\end{equation}
\begin{equation}
\left|\widehat{\sigma}(\xi)\right| \leq
\frac{3}{2\Gamma}
\exp\left(- \mu|\xi|\right) 
\ \ \ \mbox{for all}\ \ \left|\xi\right| \leq \sqrt{2}\lambda,
\label{main_theorem:sigma1}
\end{equation}
\begin{equation}
\widehat{\sigma}(\xi) = 0 
\ \ \ \mbox{for all}\ \ \left|\xi\right| > \sqrt{2}\lambda,
\label{main_theorem:sigma2}
\end{equation}
and
\begin{equation}
\|\nu\|_\infty \leq 
\frac{\Gamma}{\mu}
 \exp\left(-\mu\lambda\right).
\label{main_theorem:nu}
\end{equation}
\label{main_theorem}
\end{theorem}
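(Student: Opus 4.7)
The plan is to convert \eqref{main_theorem:inteq} into a fixed-point equation on a Banach space of bandlimited Schwartz functions with exponentially decaying Fourier transforms, and to produce $\sigma$ by Banach's contraction principle; the function $\nu$ then arises automatically as the high-frequency remainder discarded by the projection. Concretely, let $P_\lambda$ be the Fourier multiplier with symbol $\chi_{[-\sqrt{2}\lambda,\sqrt{2}\lambda]}$, and let $X$ be the Banach space of tempered distributions $f$ satisfying $\operatorname{supp}\widehat{f}\subseteq[-\sqrt{2}\lambda,\sqrt{2}\lambda]$ with norm
\[
\|f\|_X = \sup_{|\xi|\le\sqrt{2}\lambda}|\widehat{f}(\xi)|\exp(\mu|\xi|).
\]
I would look for $\sigma\in X$ as a fixed point of $F(\sigma) := P_\lambda\bigl(p + S[T[\sigma]]\bigr)$ and then define $\nu := -(I-P_\lambda)\bigl(p + S[T[\sigma]]\bigr)$, so that \eqref{main_theorem:inteq} holds by construction and \eqref{main_theorem:sigma2} follows immediately from $\sigma\in X$.

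The contraction argument proceeds in two steps. First, $T$ is a bounded linear map on $X$: since $|4\lambda^2-\xi^2|\ge 2\lambda^2$ on $|\xi|\le\sqrt{2}\lambda$, formula \eqref{kummer:That} gives $\|T[f]\|_X\le(2\lambda^2)^{-1}\|f\|_X$, and Fourier inversion together with the exponential weight yields $\|T[f]\|_\infty \le C\mu^{-1}\lambda^{-2}\|f\|_X$ and $\|(T[f])'\|_\infty \le C\mu^{-1}\lambda^{-1}\|f\|_X$. Under the hypothesis $\lambda\ge 4\max\{\mu^{-1},\Gamma\}$, this makes $T[\sigma]$ small in $L^\infty$ for every $\sigma$ in a ball $B_R\subset X$ of radius $R$ of order $\Gamma$, so the exponential series in \eqref{kummer:S} converges uniformly. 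Second, $S$ preserves exponential Fourier decay at rate $\mu$: products in physical space become convolutions of Fourier transforms via \eqref{preliminaries:fourier:5}, and the elementary bound
\[
\int \exp(-\mu|\xi-\eta|)\exp(-\mu|\eta|)\,d\eta \;\le\; (|\xi|+2\mu^{-1})\exp(-\mu|\xi|)
\]
propagates the rate $\mu$ up to a polynomial factor that is more than absorbed by the $\lambda^{-2}$ smoothing from $T$. Assembling these estimates, $F$ maps $B_R$ into itself with Lipschitz constant at most $1/2$, and Banach's theorem produces a unique fixed point satisfying the Fourier estimate \eqref{main_theorem:sigma1}.

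Finally, \eqref{main_theorem:nu} is obtained by applying Fourier inversion to the discarded high frequencies,
\[
\|\nu\|_\infty \;\le\; \frac{1}{2\pi}\int_{|\xi|>\sqrt{2}\lambda}\bigl(|\widehat{p}(\xi)| + |\widehat{S[T[\sigma]]}(\xi)|\bigr)\,d\xi,
\]
and bounding both contributions using the exponential-decay estimates: the first yields $O(\Gamma\mu^{-1}\exp(-\sqrt{2}\mu\lambda))$ by direct integration of the hypothesis on $\widehat{p}$, and the second is handled by the exponential-decay analysis of $S[T[\sigma]]$ from step two; both are stronger than the target $\Gamma\mu^{-1}\exp(-\mu\lambda)$, the extra margin coming from the factor $\sqrt{2}$ in the cutoff. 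The main obstacle is controlling the nonlinear operator $S$: each additional power of $T[\sigma]$ in \eqref{kummer:S} introduces another convolution in the Fourier variable, smearing support outward and inserting a polynomial prefactor in $|\xi|$. Verifying that the weighted convolution estimates propagate cleanly through the infinite series, and that the resulting Lipschitz constant of $F$ actually remains below $1$, is the delicate part of the argument; the smallness condition $\lambda\ge 4\max\{\mu^{-1},\Gamma\}$ is precisely what ensures that successive terms in the series shrink by a factor of order $\lambda^{-1}$, so that the polynomial losses compound only tamely.
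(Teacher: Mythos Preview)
The paper does not actually prove this theorem; it is quoted verbatim from \cite{Bremer-Rokhlin} (Theorem~12 there) and used as a black box.  So there is no in-paper proof to compare against.  That said, your outline---project onto the band $[-\sqrt{2}\lambda,\sqrt{2}\lambda]$, run a contraction argument in a weighted-$L^\infty$ Fourier norm, and define $\nu$ as the discarded high-frequency tail---is the natural strategy and almost certainly the one used in \cite{Bremer-Rokhlin}; the paper even remarks that the argument there ``is constructive,'' which is consistent with a Picard iteration of exactly the kind you describe.  Your quantitative bookkeeping ($|4\lambda^2-\xi^2|\ge 2\lambda^2$ on the band, the sub-exponential convolution estimate, the $\lambda^{-2}$ smoothing of $T$ absorbing the polynomial losses from repeated convolution) is all correct in spirit.

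There is one genuine technical gap in your proposal: the sharp Fourier cutoff $P_\lambda=\chi_{[-\sqrt{2}\lambda,\sqrt{2}\lambda]}$ does not, by itself, land you in $\Sr$.  Your fixed point $\sigma$ has $\widehat{\sigma}$ compactly supported but with a possible jump discontinuity at $\pm\sqrt{2}\lambda$ (the bound \eqref{main_theorem:sigma1} only makes $|\widehat{\sigma}(\pm\sqrt{2}\lambda)|$ exponentially small, not zero), so $\sigma$ need not be Schwartz---it will in general decay only like $|x|^{-1}$.  The same issue afflicts $\nu=-(I-P_\lambda)(p+S[T[\sigma]])$.  Since the theorem asserts $\sigma,\nu\in\Sr$, you need either to replace $P_\lambda$ by a smooth cutoff supported in a slightly larger band (and then check that the estimates \eqref{main_theorem:sigma1}--\eqref{main_theorem:nu} survive the smearing), or to argue separately that the fixed point inherits smoothness of $\widehat{\sigma}$ across the boundary of the band.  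This is not a fatal flaw---it is a standard mollification issue---but as written your construction does not yet deliver the Schwartz regularity the statement claims.
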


Suppose that $\sigma$ and $\nu$ are the functions obtained by invoking
Theorem~\ref{main_theorem}, and that 
$x(t)$ is the function defined by the formula
\begin{equation}
x(t) = \int_a^t \sqrt{q(u)}\ du.
\end{equation}
We define  $\delta$ by the formula
\begin{equation}
\delta(x) = T\left[\sigma\right](x),
\end{equation}
$r$ by the formula
\begin{equation}
r(t) = \log(q(t)) + \delta(x(t)),
\label{nonoscillatory:r}
\end{equation}
and $\alpha$ by the formula
\begin{equation}
\alpha(t) = \lambda \int_a^t  \exp\left(\frac{r(u)}{2}\right)\ du.
\label{nonoscillatory:alpha}
\end{equation}
From the discussion in Section~\ref{section:kummer}, we conclude that
$\delta(x)$ is a solution of the nonlinear differential equation
\begin{equation}
\delta''(x) + 4\lambda^2 \delta(x) = S\left[\delta\right](x) + p(x) + \nu(x)
\ \ \ \mbox{for all} \ \ x\in\mathbb{R},
\label{nonoscillatory:deltax}
\end{equation}
that $\delta(x(t))$ is a solution of the nonlinear differential equation
\begin{equation}
\delta''(t) - \frac{1}{2}\frac{q'(t)}{q(t)}\delta'(t) 
-\frac{1}{4}\left(\delta'(t)\right)^2
+ 4\lambda^2 
q(t) \left(\exp(\delta(t))-1\right) 
=
q(t)\left( p(t) + \nu(t)\right) 
\ \ \ \mbox{for all}\ \ \ t\in\mathbb{R},
\end{equation}
that $r(t)$ is a solution of the nonlinear differential equation
\begin{equation}
r''(t) - \frac{1}{4}(r'(t))^2 + 4\lambda^2  \left(\exp(r(t))-q(t)\right) =  q(t) \nu(t)
\ \ \ \mbox{for all}\  \ t\in\mathbb{R},
\label{nonoscillatory:kummer_logarithm}
\end{equation}
and that $\alpha$ is a solution of the nonlinear differential equation
\begin{equation}
\left(\alpha'(t)\right)^2 = \lambda^2 
\left(\frac{\nu(t)}{4\lambda^2}+1\right)q(t) - \frac{1}{2}\frac{\alpha'''(t)}{\alpha'(t)}
+ \frac{3}{4} \left(\frac{\alpha''(t)}{\alpha'(t)}\right)^2
\ \ \ \mbox{for all}\ \ t \in\mathbb{R}.
\label{nonoscillatory:kummer}
\end{equation}
From (\ref{nonoscillatory:kummer}), we see that $\alpha$ is a phase function
for the second order linear  ordinary differential equation
\begin{equation}
y''(t) + \lambda^2 \left(1+\frac{\nu(t)}{4\lambda^2}\right)q(t)y(t) = 0
\ \ \ \mbox{for all}\ \ a \leq t \leq b.
\label{nonoscillatory:perturbed_equation}
\end{equation}
The following result,
which appears as Theorem~14 in \cite{Bremer-Rokhlin},
bounds the order of magnitude of the  difference between solutions of 
(\ref{nonoscillatory:perturbed_equation}) and those of 
(\ref{introduction:second_order}).

\vskip 1em
\begin{theorem}
Suppose that the hypotheses of Theorem~\ref{main_theorem} are satisfied,
that $\sigma$ and $\nu$ are the functions obtained by invoking
it.  Suppose also that $\alpha$ is defined as in (\ref{nonoscillatory:alpha}),
and that $u$, $v$ are the functions defined via the formulas
\begin{equation}
u(t) = \frac{\cos(\alpha(t))}{\sqrt{\alpha'(t)}}
\label{overview:u}
\end{equation}
and
\begin{equation}
v(t) = \frac{\sin( \alpha(t))}{\sqrt{\alpha'(t)}}.
\label{overview:v}
\end{equation}
Then there exist a constant $C$ and a basis $\{\tilde{u},\tilde{v}\}$ in the space
of solutions of (\ref{introduction:second_order}) such that 
\begin{equation}
\left|u(t) - \tilde{u}(t) \right| \leq \frac{C}{\lambda} \exp\left(-\mu\lambda\right)
\ \ \ \mbox{for all}\  \ a \leq t \leq b
\end{equation}
and
\begin{equation}
\left|v(t) - \tilde{v}(t) \right| \leq \frac{C}{\lambda} \exp\left(-\mu\lambda\right)
\ \ \ \mbox{for all}\  \ a \leq t \leq b.
\end{equation}
The constant $C$ depends on the coefficient $q$ appearing in (\ref{introduction:second_order}),
but not on the parameter $\lambda$.
\label{nonoscillatory:theorem2}
\end{theorem}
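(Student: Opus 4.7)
The plan is to recognize $u$ and $v$ as exact solutions of the perturbed equation (\ref{nonoscillatory:perturbed_equation}), construct the comparison basis $\{\tilde{u},\tilde{v}\}$ as solutions of the unperturbed equation (\ref{introduction:second_order}) that match $u,v$ and their derivatives at $t=a$, and then bound the differences $u-\tilde{u}$ and $v-\tilde{v}$ by variation of parameters together with uniform sup-norm bounds of order $\lambda^{-1/2}$ on all four functions.

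First, by (\ref{nonoscillatory:kummer}), the functions $u$ and $v$ defined by (\ref{overview:u})--(\ref{overview:v}) solve $y''+\lambda^2\tilde{q}(t)y=0$ on $[a,b]$, where $\tilde{q}(t) = (1+\nu(t)/(4\lambda^2))q(t)$, and a direct computation gives the Wronskian identity $uv'-u'v\equiv 1$. Let $\tilde{u},\tilde{v}$ be solutions of (\ref{introduction:second_order}) with $\tilde{u}(a)=u(a)$, $\tilde{u}'(a)=u'(a)$, $\tilde{v}(a)=v(a)$, $\tilde{v}'(a)=v'(a)$; since (\ref{introduction:second_order}) has no first-derivative term, their Wronskian equals $1$ identically. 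Setting $e=u-\tilde{u}$ one obtains
\begin{equation*}
e''(t)+\lambda^2 q(t)\,e(t)=\lambda^2(q(t)-\tilde{q}(t))\,u(t)=-\frac{\nu(t)\,q(t)}{4}\,u(t),\qquad e(a)=e'(a)=0,
\end{equation*}
so variation of parameters against the basis $\{\tilde{u},\tilde{v}\}$ yields
\begin{equation*}
e(t)=-\frac{1}{4}\int_a^t\bigl[\tilde{v}(t)\tilde{u}(s)-\tilde{u}(t)\tilde{v}(s)\bigr]\,\nu(s)\,q(s)\,u(s)\,ds,
\end{equation*}
with an entirely analogous identity for $v-\tilde{v}$.

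The crucial step is to establish uniform bounds $\|u\|_\infty,\|v\|_\infty,\|\tilde{u}\|_\infty,\|\tilde{v}\|_\infty=O(\lambda^{-1/2})$ on $[a,b]$. For $u$ and $v$ this reduces to a lower bound $\alpha'(t)\geq c\lambda$ independent of $\lambda$. Since $\alpha'=\lambda\exp(r/2)$ with $r=\log q+T[\sigma]$, I would estimate $\|T[\sigma]\|_\infty$ using (\ref{kummer:That}): the support restriction $\mathrm{supp}\,\widehat{\sigma}\subset[-\sqrt{2}\lambda,\sqrt{2}\lambda]$ given by (\ref{main_theorem:sigma2}) makes the denominator $4\lambda^2-\xi^2$ bounded below by $2\lambda^2$ on the relevant set, and combined with the Fourier bound (\ref{main_theorem:sigma1}) and Fourier inversion this gives $\|T[\sigma]\|_\infty=O(\lambda^{-2})$, hence $\alpha'(t)=\lambda\sqrt{q(t)}\,(1+O(\lambda^{-2}))\geq c\lambda$ for $\lambda$ large. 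For $\tilde{u},\tilde{v}$ I would use the energy $E(t)=(y'(t))^2+\lambda^2 q(t)\,y(t)^2$, which satisfies $E'(t)=\lambda^2 q'(t)\,y(t)^2\leq(|q'|/q)\,E(t)$; Gronwall (Theorem~\ref{preliminaries:gronwall:theorem1}) then gives $E(t)\leq E(a)\exp\bigl(\int_a^b|q'|/q\,ds\bigr)$, and since the matched initial data yields $E(a)=O(\lambda)$, we get $|\tilde{u}(t)|,|\tilde{v}(t)|=O(\lambda^{-1/2})$.

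Combining, the integrand in the variation-of-parameters formula is $O(\lambda^{-1}\|\nu\|_\infty\cdot\lambda^{-1/2})$, so integrating over the finite interval $[a,b]$ and invoking (\ref{main_theorem:nu}) yields
\begin{equation*}
|u(t)-\tilde{u}(t)|=O\bigl(\lambda^{-3/2}\|\nu\|_\infty\bigr)=O\bigl(\lambda^{-3/2}\mu^{-1}\exp(-\mu\lambda)\bigr),
\end{equation*}
which is \emph{a fortiori} bounded by $C\lambda^{-1}\exp(-\mu\lambda)$, and identically for $v-\tilde{v}$. The main obstacle is the lower bound $\alpha'(t)\geq c\lambda$, because the only estimates available for $\sigma$ live on the Fourier side; it is precisely the Paley--Wiener-style support constraint on $\widehat{\sigma}$ provided by Theorem~\ref{main_theorem} that tames the resonant symbol $1/(4\lambda^2-\xi^2)$ and rescues the bound on $T[\sigma]$.
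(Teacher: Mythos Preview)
The paper does not prove this theorem; it is quoted verbatim as Theorem~14 of \cite{Bremer-Rokhlin} and stated without argument. So there is no in-paper proof to compare your proposal against.

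That said, your argument is sound and is essentially the standard route one would expect in \cite{Bremer-Rokhlin}. The identification of $u,v$ as exact solutions of the perturbed equation, the choice of $\tilde u,\tilde v$ matching initial data at $t=a$, and the variation-of-parameters formula are all correct; the Wronskian computation $uv'-u'v\equiv 1$ is right, and since (\ref{introduction:second_order}) has no first-order term the Wronskian of $\tilde u,\tilde v$ is preserved, so $\{\tilde u,\tilde v\}$ is genuinely a basis. Your two crucial sup-norm bounds are also correct: the bound $\|T[\sigma]\|_\infty=O(\lambda^{-2})$ follows exactly as you say from (\ref{main_theorem:sigma1})--(\ref{main_theorem:sigma2}) and the fact that $|4\lambda^2-\xi^2|\geq 2\lambda^2$ on $\mathrm{supp}\,\widehat\sigma$ (this is, in fact, precisely conclusion (\ref{bound:theorem1:conclusion2}) of Theorem~\ref{bound:theorem1} in the paper), which yields $\alpha'(t)\geq c\lambda$ and hence $\|u\|_\infty,\|v\|_\infty=O(\lambda^{-1/2})$. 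The energy/Gronwall argument for $\tilde u,\tilde v$ is also fine; one should note that $u'(a)=O(\lambda^{1/2})$ requires the additional observation that $\alpha''/\alpha'{}^{3/2}=r'/(2\sqrt{\alpha'})$ with $r'$ bounded independently of $\lambda$, which follows from $r=\log q+\delta(x(t))$ and the bound (\ref{bound:theorem1:conclusion3}) on $\delta'$. Your final estimate in fact delivers $O(\lambda^{-3/2}\exp(-\mu\lambda))$, stronger than the stated $O(\lambda^{-1}\exp(-\mu\lambda))$.
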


\label{section:nonoscillatory}
\end{subsection}

\begin{subsection}{A bound on $\delta$ in the event that $p$ is small in magnitude}

In this section, we bound the solution
$\delta$  of the nonlinear differential equation (\ref{nonoscillatory:deltax}) and its derivative
under an assumption on the function $p$ appearing  in
(\ref{nonoscillatory:deltax}).
More specifically, we show that when the function $p$ 
 is of sufficiently small magnitude on an interval of the form $(-\infty,x_0]$ 
and $\lambda$ is sufficiently large,
the restrictions of $\delta$ and $\delta'$ to $(-\infty,x_0]$ are on the order
of 
\begin{equation}
\exp\left(-\frac{\lambda \mu}{2}\right).
\end{equation}

We proceed by perturbing the parameter $\lambda$
in the linear differential operator appearing on the left-hand side
of Equation~(\ref{nonoscillatory:deltax}) by an imaginary constant $i\eta$
of small magnitude. This results in the nonlinear differential equation
\begin{equation}
\delta_\eta''(x)  + 4(\lambda+i\eta)^2 \delta_\eta(x) =  S\left[\delta_\eta\right](x) + p(x) + \nu(x).
\label{bound:delta_eta}
\end{equation}
We then develop estimates on the magnitudes of 
the restrictions of a
solution $\delta_\eta$ of (\ref{bound:delta_eta})
and its derivative $\delta_\eta'$ to the interval $(-\infty,x_0]$.
We use these estimates to bound the restriction of
 $\delta$ and its derivative $\delta'$ to
the interval $(-\infty,x_0]$.
The advantage of  (\ref{bound:delta_eta}) over the nonlinear differential equation
\begin{equation}
\delta''(x)  + 4\lambda^2 \delta(x) =  S\left[\delta\right](x) + p(x) + \nu(x)
\label{bound:delta}
\end{equation}
defining $\delta$ is that the fundamental solution 
\begin{equation}
\frac{1}{4(\lambda+i\eta)i}\exp\left(2 (\lambda + i \eta) i |x|\right)
\label{bound:green1}
\end{equation}
of (\ref{bound:delta_eta}) associated with the Fourier transform is an element of $\Lp{1}$.  
This is in contrast to  the fundamental solution 
\begin{equation}
\frac{1}{4 \lambda}\sin\left(2 \lambda |x|\right)
\end{equation}
for the equation (\ref{bound:delta}) associated with the Fourier transform,
which is not absolutely integrable.

We begin by defining, for any positive real number $\eta$,
the operator $T_\eta$ for functions $f \in \Sr$ via the formula
\begin{equation}
T_\eta\left[f\right](x) = 
\frac{1}{4 \left(\lambda + i\eta\right)}
\int_{-\infty}^\infty \exp\left(2 (\lambda + i \eta) \left|x-y\right|\right) f(y)\ dy.
\label{bound:Teta}
\end{equation}
According to Theorem~\ref{preliminaries:helmholtz:theorem1},
$T_\eta\left[f\right]$ is the unique solution of the equation
\begin{equation}
y''(t) + 4 (\lambda+i\eta)^2 y(t) = f(t)
\end{equation}
such that
\begin{equation}
\widehat{T_\eta\left[f\right]}(\xi) = 
\frac{\widehat{f}(\xi)}{4(\lambda+i\eta)^2 - \xi^2}
\ \ \ \mbox{for all}\ \ \xi \in \mathbb{R}.
\end{equation}
The following theorem bounds  the difference
between $T_\eta\left[\sigma\right]$ and $T\left[\sigma\right]$ 
in the event that $\sigma$ satisfies the
conclusions of Theorem~\ref{main_theorem}.

%
%

\vskip 1em
\begin{theorem}
Suppose that $\sigma \in \Sr$, 
that there exist  positive real numbers $\mu$, $\Gamma$ and $\lambda$ such that
\begin{equation}
\left|\widehat{\sigma}(\xi)\right|
\leq 
\frac{3\Gamma}{2} \exp\left(- \mu \left|\xi\right|\right)
\ \ \ \mbox{for all} \ \ \left|\xi\right| < \sqrt{2}\lambda,
\label{bound:theorem1:assumption1}
\end{equation}
and that
\begin{equation}
\widehat{\sigma}(\xi) = 0
\ \ \ \mbox{for all} \ \ \left|\xi\right| \geq \sqrt{2}\lambda.
\label{bound:theorem1:assumption2}
\end{equation}
Suppose also that $\eta$ is a positive real number such that
\begin{equation}
2 \eta \leq \lambda.
\end{equation}
Suppose further that $\delta$ is defined via the formula
\begin{equation}
\delta(x) = T\left[\sigma\right](x) = 
\frac{1}{4\lambda} \int_{-\infty}^\infty 
\sin\left(2 \lambda   \left|x-y\right|\right)
\sigma(y)\ dy,
\label{bound:theorem1:delta}
\end{equation}
and that $\delta_\eta$ is defined via the formula
\begin{equation}
\delta_\eta(x) = 
T_\eta\left[\sigma\right](x) = 
\frac{1}{4 \left(\lambda + i\eta\right) i } \int_{-\infty}^\infty 
\exp\left(2  \left(\lambda + i \eta\right) i \left|x-y\right|\right)
\sigma(y)\ dy.
\label{bound:theorem1:deltaeta}
\end{equation}
Then 
\begin{equation}
\lim_{\left|x\right|\to\infty}  \left|\delta_\eta(x)\right| + \left|\delta_\eta'(x)\right| = 0,
\label{bound:theorem1:conclusion1}
\end{equation}
\begin{equation}
\left|\delta(x)\right|
\leq 
\frac{3\Gamma}{4\pi \mu\lambda^2}
\ \ \ \mbox{for all} \ \ x \in \mathbb{R},
\label{bound:theorem1:conclusion2}
\end{equation}
\begin{equation}
\left|\delta'(x)\right| \leq
\frac{3\Gamma }{4\pi \mu^2 \lambda^2},
\ \ \ \mbox{for all} \ \ x \in \mathbb{R},
\label{bound:theorem1:conclusion3}
\end{equation}
\begin{equation}
\left| \delta(x) - \delta_\eta(x)\right| \leq 
\frac{3\Gamma \eta}{\pi \mu \lambda^3}
\ \ \ \mbox{for all} \ \ x \in \mathbb{R},
\label{bound:theorem1:conclusion4}
\end{equation}
and
\begin{equation}
\left| \delta'(x) - \delta_\eta'(x)\right| \leq 
\frac{3\Gamma  \eta}{\pi \mu^2\lambda^3}.
\ \ \ \mbox{for all} \ \ x \in \mathbb{R}.
\label{bound:theorem1:conclusion5}
\end{equation}
\label{bound:theorem1}
\end{theorem}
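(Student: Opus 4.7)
The plan is to work entirely on the Fourier side. By Theorem~\ref{preliminaries:helmholtz:theorem1} the Fourier transforms of $\delta$ and $\delta_\eta$ are
\begin{equation*}
\widehat{\delta}(\xi) = \frac{\widehat{\sigma}(\xi)}{4\lambda^2 - \xi^2}, \qquad \widehat{\delta_\eta}(\xi) = \frac{\widehat{\sigma}(\xi)}{4(\lambda+i\eta)^2 - \xi^2}.
\end{equation*}
The compact support hypothesis (\ref{bound:theorem1:assumption2}) confines both numerators to $|\xi| \leq \sqrt{2}\lambda$, and on this interval a direct computation gives $|4\lambda^2-\xi^2| \geq 2\lambda^2$ and, using $2\eta \leq \lambda$, $|4(\lambda+i\eta)^2-\xi^2| \geq \lambda^2$. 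Thus both Fourier transforms are smooth and compactly supported, hence elements of $L^1(\mathbb{R}) \cap \Sr$, and Fourier inversion recovers $\delta$, $\delta_\eta$, and (by inserting an $i\xi$ factor) their derivatives as absolutely convergent integrals.

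For (\ref{bound:theorem1:conclusion2}) and (\ref{bound:theorem1:conclusion3}) I would apply the triangle inequality to the Fourier inversion formula, estimate $|\widehat{\sigma}(\xi)|$ by (\ref{bound:theorem1:assumption1}) and the denominator from below by $2\lambda^2$, and then extend the domain of integration from $[-\sqrt{2}\lambda, \sqrt{2}\lambda]$ to all of $\mathbb{R}$. This reduces the task to the elementary integrals $\int_{\mathbb{R}} e^{-\mu|\xi|}\,d\xi = 2/\mu$ and $\int_{\mathbb{R}} |\xi|e^{-\mu|\xi|}\,d\xi = 2/\mu^2$, and the claimed constants $3\Gamma/(4\pi\mu\lambda^2)$ and $3\Gamma/(4\pi\mu^2\lambda^2)$ fall out directly.

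Conclusion (\ref{bound:theorem1:conclusion1}) is essentially free: since $\widehat{\delta_\eta}$ is smooth and compactly supported it lies in $\Sr$, and since the Fourier transform is an isomorphism $\Sr \to \Sr$ it follows that $\delta_\eta \in \Sr$, so $\delta_\eta$ and all its derivatives decay faster than any polynomial at infinity. A one-line Riemann--Lebesgue argument applied to $\widehat{\delta_\eta}$ and $i\xi\widehat{\delta_\eta}$ works equally well.

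Finally, for (\ref{bound:theorem1:conclusion4}) and (\ref{bound:theorem1:conclusion5}) I would form the resolvent difference
\begin{equation*}
\widehat{\delta}(\xi)-\widehat{\delta_\eta}(\xi) \;=\; \widehat{\sigma}(\xi)\cdot\frac{8i\lambda\eta - 4\eta^2}{(4\lambda^2-\xi^2)\bigl(4(\lambda+i\eta)^2-\xi^2\bigr)},
\end{equation*}
bound the numerator by a multiple of $\lambda\eta$ using $2\eta \leq \lambda$, bound the product of denominators below by a multiple of $\lambda^4$ using the two estimates above, and integrate against (\ref{bound:theorem1:assumption1}) as before; the derivative bound (\ref{bound:theorem1:conclusion5}) adds the familiar factor of $1/\mu$ coming from $\int |\xi|e^{-\mu|\xi|}\,d\xi$. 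The main obstacle is arithmetic rather than conceptual: the conceptual structure is a routine resolvent perturbation, but the constants in (\ref{bound:theorem1:conclusion4}) and (\ref{bound:theorem1:conclusion5}) are essentially tight, since as $\eta\to 0$ the ratio $(8i\lambda\eta - 4\eta^2)/(AB)$ is asymptotic to $\eta/(2\lambda^3)$; reaching the stated constants exactly will require a careful optimization in the lower bound on $|4(\lambda+i\eta)^2-\xi^2|$ (using, for instance, both its real part and its imaginary part) rather than the crude bound by $\lambda^2$ used above.
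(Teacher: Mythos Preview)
Your proposal is correct and follows exactly the paper's route: compact support of $\widehat\sigma$ places $\widehat{\delta_\eta}\in C_c^\infty(\mathbb{R})$ (whence $\delta_\eta\in\Sr$, giving (\ref{bound:theorem1:conclusion1})), the bounds (\ref{bound:theorem1:conclusion2})--(\ref{bound:theorem1:conclusion3}) come from $\|f\|_\infty\leq\frac{1}{2\pi}\|\widehat f\|_1$ together with $|4\lambda^2-\xi^2|\geq 2\lambda^2$ on the support, and (\ref{bound:theorem1:conclusion4})--(\ref{bound:theorem1:conclusion5}) from the resolvent-difference estimate you wrote down. For the constant you flagged, the paper's trick is to factor the difference as $\dfrac{4\eta}{|4\lambda^2-\xi^2|}\cdot\dfrac{\sqrt{4\lambda^2+\eta^2}}{|4(\lambda+i\eta)^2-\xi^2|}$, bound the first factor by $2\eta/\lambda^2$, and show the second is at most $1/\lambda$ via $(2\lambda^2-4\eta^2)^2+64\lambda^2\eta^2 = 4\lambda^4+48\lambda^2\eta^2+16\eta^4 \geq \lambda^2(4\lambda^2+\eta^2)$, yielding the uniform bound $2\eta/\lambda^3$---precisely the ``real and imaginary part'' optimization you anticipated.
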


\begin{proof}
We observe that functions
\begin{equation}
\frac{\widehat{\sigma}(\xi)}{4(\lambda+i\eta)^2-\xi^2}
\label{bound:theorem1:1}
\end{equation}
and 
\begin{equation}
\frac{i\xi \widehat{\sigma}(\xi)}{4(\lambda+i\eta)^2-\xi^2},
\label{bound:theorem1:2}
\end{equation}
are elements of $C_c^\infty\left(\mathbb{R}\right)$.
Among other things, this implies that the inverse Fourier transforms of 
(\ref{bound:theorem1:1}) and (\ref{bound:theorem1:2}),
which are  $\delta_\eta$ and $\delta_\eta'$, respectively, are elements of $\Sr$.
The conclusion (\ref{bound:theorem1:conclusion1}) follows immediately
from this observation.

An elementary calculation shows that
\begin{equation}
\left|
\frac{1}{4(\lambda+i\eta)^2-\xi^2}
-
\frac{1}{4\lambda^2-\xi^2}
\right|
= 
\left|\frac{4\eta}{4\lambda^2-\xi^2}\right|
\sqrt{
\frac{4\lambda^2 + \eta^2}{(4\lambda^2-\xi^2-4\eta^2)^2+64\eta^2\lambda^2}
}.
\label{bound:theorem1:5}
\end{equation}
We observe that
\begin{equation}
\left|\frac{4\eta}{4\lambda^2-\xi^2}\right|
\leq 
 \frac{2 \eta}{\lambda^2}
\label{bound:theorem1:6}
\end{equation}
for all $\eta > 0$ and $|\xi| \leq \sqrt{2}\lambda$. 
Moreover, 
\begin{equation}
4\lambda^2 - 4\eta^2 - \xi^2
\geq
2\lambda^2 - 4 \eta^2
\geq 0 
\label{bound:theorem1:7}
\end{equation}
for all $|\xi| \leq \sqrt{2}\lambda$ and $\lambda \geq 2\eta$.   
It follows from 
(\ref{bound:theorem1:7}) that
\begin{equation}
\begin{aligned}
\frac{4\lambda^2 + \eta^2}{(4\lambda^2-\xi^2-4\eta^2)^2+64\eta^2\lambda^2}
&\leq 
\frac{ 4\lambda^2 + \eta^2}{(2\lambda^2 - 4\eta^2)^2+64\eta^2\lambda^2}
\\
&=
\frac{4\lambda^2 + \eta^2}{4\lambda^4 + 48 \lambda^2 \eta^2 +16\eta^4}
\\
&\leq
\frac{1}{\lambda^2}
\frac{4\lambda^2 + \eta^2}{4\lambda^2 + 48 \eta^2}\\
&\leq
\frac{1}{\lambda^2}
\end{aligned}
\label{bound:theorem1:8}
\end{equation}
for all $ 0 < 2\eta \leq \lambda$, and $|\xi|\leq \sqrt{2}\lambda$.
We insert (\ref{bound:theorem1:8})
and (\ref{bound:theorem1:6})
 into (\ref{bound:theorem1:5})
in order to conclude that
\begin{equation}
\left|
\frac{1}{4(\lambda+i\eta)^2-\xi^2}
-
\frac{1}{4\lambda^2-\xi^2}
\right|
\leq
\frac{2 \eta}{\lambda^3}
\label{bound:theorem1:9}
\end{equation}
for all $0 < 2\eta \leq \lambda$ and $|\xi|< \sqrt{2}\lambda$.
From Theorem~\ref{preliminaries:helmholtz:theorem1} and 
(\ref{bound:theorem1:delta})  we conclude that
\begin{equation}
\begin{aligned}
\left\|\delta\right\|_\infty
\leq 
\frac{1}{2\pi} \left\|\widehat{\delta}\right\|_1
\leq
\frac{1}{2\pi} 
\int_{-\infty}^\infty \left|\frac{\widehat{\sigma}(\xi)}{4\lambda^2-\xi^2}\right|\ d\xi,
\end{aligned}
\label{bound:theorem1:10}
\end{equation}
and that
\begin{equation}
\begin{aligned}
\left\|\delta'\right\|_\infty
\leq 
\frac{1}{2\pi} \left\|\widehat{\delta'}\right\|_1
\leq
\frac{1}{2\pi} 
\int_{-\infty}^\infty \left|\frac{i\xi\widehat{\sigma}(\xi)}{4\lambda^2-\xi^2}\right|\ d\xi.
\end{aligned}
\label{bound:theorem1:11}
\end{equation}
We insert  (\ref{bound:theorem1:assumption1})
and (\ref{bound:theorem1:assumption2})
into (\ref{bound:theorem1:10}) in order to conclude that
\begin{equation}
\begin{aligned}
\left\|\delta\right\|_\infty 
\leq 
\frac{3\Gamma}{4\pi}
 \int_{-\sqrt{2}\lambda}^{\sqrt{2}\lambda} 
\left|\frac{1}{4\lambda^2-\xi^2}\right|
\exp\left(-\mu \left|\xi\right|\right)
\ d\xi
\leq 
\frac{3\Gamma}{8\pi\lambda^2} \int_{-\sqrt{2}\lambda}^{\sqrt{2}\lambda} 
\exp\left(-\mu \left|\xi\right|\right)
\ d\xi
\leq
\frac{3\Gamma }{4\pi\mu\lambda^2},
\end{aligned}
\label{bound:theorem1:12}
\end{equation}
which is (\ref{bound:theorem1:conclusion2}).
By inserting
(\ref{bound:theorem1:assumption1}) and  (\ref{bound:theorem1:assumption2}) 
into (\ref{bound:theorem1:11}) we obtain
\begin{equation}
\begin{aligned}
\left\|\delta'\right\|_\infty 
&\leq 
\frac{3\Gamma}{4\pi} \int_{-\sqrt{2}\lambda}^{\sqrt{2}\lambda} 
\left|\frac{i\xi}{4\lambda^2-\xi^2}\right|
\exp\left(-\mu \left|\xi\right|\right)
\ d\xi
\leq 
\frac{3\Gamma}{8 \pi \lambda^2} \int_{-\sqrt{2}\lambda}^{\sqrt{2}\lambda} 
\left|\xi\right|\exp\left(-\mu \left|\xi\right|\right)
\ d\xi
\leq
\frac{3\Gamma }{4\pi \mu^2 \lambda^2},
\end{aligned}
\label{bound:theorem1:13}
\end{equation}
which is (\ref{bound:theorem1:conclusion3}).
Next, we observe that
\begin{equation}
\begin{aligned}
\left\|\delta - \delta_\eta\right\|_\infty 
\leq 
\frac{1}{2\pi} 
\left\|\widehat{\delta} - \widehat{\delta_\eta}\right\|_1
\leq 
\frac{1}{2\pi}
\int_{-\infty}^{\infty}
\left|
\frac{\widehat{\sigma}(\xi)}{4\lambda^2-\xi^2} - 
\frac{\widehat{\sigma}(\xi)}{4(\lambda+i\eta)^2-\xi^2}\right|
\ d\xi,
\end{aligned}
\label{bound:theorem1:14}
\end{equation}
and that
\begin{equation}
\begin{aligned}
\left\|\delta' - \delta_\eta'\right\|_\infty 
\leq 
\frac{1}{2\pi} 
\left\|i\xi\widehat{\delta} - i\xi\widehat{\delta_\eta}\right\|_1
\leq 
\frac{1}{2\pi}
\int_{-\infty}^{\infty}
\left|
\frac{i\xi\widehat{\sigma}(\xi)}{4\lambda^2-\xi^2} - 
\frac{i\xi\widehat{\sigma}(\xi)}{4(\lambda+i\eta)^2-\xi^2}\right|
\ d\xi.
\end{aligned}
\label{bound:theorem1:15}
\end{equation}
We insert  (\ref{bound:theorem1:assumption1}),
(\ref{bound:theorem1:assumption2})  
and (\ref{bound:theorem1:9}) into 
(\ref{bound:theorem1:14}) in order to establish that
\begin{equation}
\begin{aligned}
\left\|\delta - \delta_\eta\right\|_\infty 
\leq
\frac{3 \Gamma\eta}{2\pi\lambda^3}
\int_{-\sqrt{2}\lambda}^{\sqrt{2}\lambda} 
\exp\left(-\mu \left|\xi\right|\right)
\ d\xi
\leq 
\frac{3\Gamma \eta}{\pi \mu \lambda^3},
\end{aligned}
\label{bound:theorem1:16}
\end{equation}
for all $0 <\eta < 2\lambda$,
which is the conclusion (\ref{bound:theorem1:conclusion4}).
Finally, we combine  (\ref{bound:theorem1:assumption1}),
(\ref{bound:theorem1:assumption2})  
and (\ref{bound:theorem1:9})
with
(\ref{bound:theorem1:15}) in order to conclude that
\begin{equation}
\begin{aligned}
\left\|\delta' - \delta_\eta'\right\|_\infty 
\leq 
\frac{3\Gamma  \eta}{2\pi\lambda^3}
\int_{-\sqrt{2}\lambda}^{\sqrt{2}\lambda} 
\left|\xi\right|\exp\left(- \mu \left|\xi\right|\right)
\ d\xi
\leq 
\frac{3\Gamma  \eta}{\pi \mu^2\lambda^3},
\end{aligned}
\label{bound:theorem1:17}
\end{equation}
for all $0 < 2 \eta < \lambda$, 
which establishes (\ref{bound:theorem1:conclusion5}).
\end{proof}

%
%

We now use the conclusions of Theorem~\ref{bound:theorem1}
in order to bound the magnitude of $S\left[\delta\right]$, where $S$
is the nonlinear differential operator defined in (\ref{kummer:S}), 
in terms  of the solution $\delta_\eta$ of the complexified equation
and its derivative.

\vskip 1em
\begin{theorem}
Suppose that the hypotheses of Theorem~\ref{bound:theorem1}
are satisfied, and that
\begin{equation}
\lambda \geq 4 \max\left\{\Gamma, \frac{1}{\mu}\right\}.
\label{bound:theorem2:lambda}
\end{equation}
Suppose further  that $S$ is the nonlinear differential operator defined via 
(\ref{kummer:S}).    Then
\begin{equation}
\left|S\left[\delta\right](x)\right|
\leq 
\frac{\eta^2}{68}
+
\frac{\left|\delta_\eta'(x)\right|^2 }{2}
+ \frac{102 \lambda^2}{25} \left|\delta_\eta(x)\right|^2
\ \ \ \mbox{for all}\ \ x\in\mathbb{R}.
\label{complexifications:theorem2:conclusion}
\end{equation}
\label{bound:theorem2}
\end{theorem}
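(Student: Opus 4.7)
The plan is to bound $|S[\delta](x)|$ by its two natural pieces, the $(\delta')^2$ term and the $4\lambda^2(e^\delta-1-\delta)$ term, and then to pass from $\delta$ to $\delta_\eta$ via the triangle inequality, absorbing the small errors coming from Theorem~\ref{bound:theorem1} into the $\eta^2/68$ slack.

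First, I would use the definition (\ref{kummer:S}) and sum the Taylor series to write
\begin{equation}
S[\delta](x) = \frac{(\delta'(x))^2}{4} - 4\lambda^2\bigl(\exp(\delta(x))-1-\delta(x)\bigr),
\end{equation}
which together with the elementary estimate $|e^z - 1 - z| \leq \tfrac{1}{2}|z|^2 e^{|z|}$ gives
\begin{equation}
|S[\delta](x)| \leq \frac{|\delta'(x)|^2}{4} + 2\lambda^2 e^{\|\delta\|_\infty}\,|\delta(x)|^2.
\end{equation}
Under the hypothesis $\lambda \geq 4\max\{\Gamma,1/\mu\}$, conclusion (\ref{bound:theorem1:conclusion2}) of Theorem~\ref{bound:theorem1} shows that $\|\delta\|_\infty$ is tiny (of order $\Gamma/(\mu\lambda^2) \leq 1/(16)$ up to harmless constants), so $e^{\|\delta\|_\infty}$ can be absorbed to leave a factor arbitrarily close to $2\lambda^2$.

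Next I would apply the standard AM--GM splitting $(a+b)^2 \leq (1+c)a^2 + (1+1/c)b^2$ twice. Writing $\delta' = \delta_\eta' + (\delta' - \delta_\eta')$ with $c=1$ yields
\begin{equation}
\frac{|\delta'(x)|^2}{4} \leq \frac{|\delta_\eta'(x)|^2}{2} + \frac{|\delta'(x)-\delta_\eta'(x)|^2}{2},
\end{equation}
which already produces the desired $\tfrac{1}{2}|\delta_\eta'(x)|^2$ term. For the exponential piece I would write $\delta = \delta_\eta + (\delta - \delta_\eta)$ and split with a parameter $c$ tuned so that $2\lambda^2 e^{\|\delta\|_\infty}(1+c) \leq \tfrac{102}{25}\lambda^2$, which is possible since $2 \cdot 1.02 \cdot 1.02 < 102/25 = 4.08$. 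The residual terms involving $|\delta - \delta_\eta|^2$ and $|\delta' - \delta_\eta'|^2$ are then bounded by conclusions (\ref{bound:theorem1:conclusion4}) and (\ref{bound:theorem1:conclusion5}):
\begin{equation}
|\delta(x)-\delta_\eta(x)|^2 \leq \left(\frac{3\Gamma\eta}{\pi\mu\lambda^3}\right)^2,\qquad
|\delta'(x)-\delta_\eta'(x)|^2 \leq \left(\frac{3\Gamma\eta}{\pi\mu^2\lambda^3}\right)^2.
\end{equation}

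Finally, I would use $\lambda \geq 4\Gamma$ and $\mu\lambda \geq 4$ to dominate all factors of $\Gamma/(\mu^k\lambda^m)$ by pure numerical constants, reducing the residual terms to a bound of the form $K\eta^2$ with $K$ computable and (with a conservative choice of $c$) at most $1/68$. The only real obstacle is bookkeeping: one has to choose the AM--GM parameter $c$ and control the factor $e^{\|\delta\|_\infty}$ carefully enough that $2\lambda^2(1+\text{corrections}) \leq \tfrac{102}{25}\lambda^2$ while still leaving enough room in the $\eta^2$ slack to swallow both error terms. This is routine but requires the numerical constants in the hypothesis $\lambda \geq 4\max\{\Gamma,1/\mu\}$ to be used quantitatively rather than qualitatively.
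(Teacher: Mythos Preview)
Your approach is essentially the paper's own: bound $|S[\delta]|$ by $\tfrac{1}{4}|\delta'|^2 + 2\lambda^2 e^{|\delta|}|\delta|^2$, use Theorem~\ref{bound:theorem1} and the hypothesis $\lambda\geq 4\max\{\Gamma,1/\mu\}$ to get $e^{|\delta|}\leq 102/100$, then split $\delta=\delta_\eta+\tau$ and $\delta'=\delta_\eta'+\tau'$ and absorb the $\tau$-terms into $\eta^2/68$. The only wrinkle is that you overcomplicate the splitting: the paper just uses $(a+b)^2\leq 2a^2+2b^2$ (your $c=1$) for \emph{both} pieces, which already yields the exact coefficients $\tfrac{1}{2}$ and $\tfrac{102}{25}=2\cdot\tfrac{102}{100}\cdot 2$, and the residuals $\tfrac{1}{2}|\tau'|^2+\tfrac{102}{25}\lambda^2|\tau|^2$ fit comfortably under $\eta^2/68$ once you insert $|\tau|\leq\tfrac{3\eta}{16\pi\lambda}$ and $|\tau'|\leq\tfrac{3\eta}{64\pi}$; no tuning of $c$ is needed.
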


\begin{proof}
We define the function $\tau$ via the formula
\begin{equation}
\tau(x) = \delta(x) - \delta_\eta(x)
\label{bound:theorem2:1}
\end{equation}
so that $\delta = \delta_\eta + \tau_\eta$.  
We invoke Theorem~\ref{bound:theorem1}
and exploit the assumption (\ref{bound:theorem2:lambda})
in order to conclude that
\begin{equation}
\left|\tau(x)\right| \leq 
\frac{3\eta}{16 \pi \lambda},
\ \ \ \mbox{for all} \ \ x \in\mathbb{R}
\label{bound:theorem2:2}
\end{equation}
\begin{equation}
\left|\tau'(x)\right| \leq 
\frac{3\eta}{ 64 \pi}
\ \ \ \mbox{for all} \ \ x \in\mathbb{R}.
\label{bound:theorem2:3}
\end{equation}
and
\begin{equation}
\left|\delta(x)\right| \leq 
\frac{3}{64 \pi}
\ \ \ \mbox{for all} \ \ x \in\mathbb{R}.
\label{bound:theorem2:3.5}
\end{equation}
From the definition (\ref{kummer:S})
of $S$  and the triangle inequality we conclude that
\begin{equation}
\begin{aligned}
\left|S\left[\delta\right](x)\right| 
&\leq
\frac{\left|\left(\delta'(x)\right)^2\right|}{4}
+ 4\lambda^2
\left|
\exp(\delta(x)) - \delta(x) - 1
\right|
\end{aligned}
\label{bound:theorem2:4}
\end{equation}
for all $x \in \mathbb{R}$.  We insert the inequality
\begin{equation}
\left|\exp(x) -x - 1\right| \leq  \frac{\left|x\right|^2}{2} \exp(\left|x\right|)
\ \ \ \mbox{for all}\ \ \ x \in\mathbb{R}
\label{bound:theorem2:5}
\end{equation}
into (\ref{bound:theorem2:4}) in order to conclude that 
\begin{equation}
\begin{aligned}
\left|S\left[\delta\right](x)\right| 
&\leq
\frac{\left|\delta'(x)\right|^2}{4}
+ 2\lambda^2
\exp\left(\left|\delta(x)\right|\right)
\left|\delta(x)\right|^2
\ \ \ \mbox{for all}\ \ x\in\mathbb{R}.
\end{aligned}
\label{bound:theorem2:6}
\end{equation}
From (\ref{bound:theorem2:3.5}) we obtain
\begin{equation}
\exp\left(\left|\delta(x)\right|\right) 
\leq 
\exp\left(\frac{3}{64\pi}\right) 
\leq 
\frac{102}{100}
\ \ \ \mbox{for all}\ \ x\in\mathbb{R}.
\label{bound:theorem2:7}
\end{equation}
We insert (\ref{bound:theorem2:7}) 
into (\ref{bound:theorem2:6}) in order to conclude that
\begin{equation}
\begin{aligned}
\left|S\left[\delta\right](x)\right| 
&\leq
\frac{\left|\delta'(x)\right|^2}{4}
+ \frac{204 \lambda^2}{100}
\left|\delta(x)\right|^2
\ \ \ \mbox{for all}\ \ x\in\mathbb{R}.
\end{aligned}
\label{bound:theorem2:8}
\end{equation}
We combine (\ref{bound:theorem2:1}) with (\ref{bound:theorem2:8}) and  the fact that
\begin{equation}
(\left|x + y\right|)^2 \leq
(\left|x\right|+\left|y\right|)^2
\leq
2 \left(\left|x\right|^2 +\left|y\right|^2\right)
\ \ \ \mbox{for all} \ \ x,y \in \mathbb{R}
\label{bound:theorem2:9}
\end{equation}
in order to conclude that
\begin{equation}
\begin{aligned}
\left|S\left[\delta\right](x)\right| 
&\leq
\frac{\left|\delta_\eta'(x)\right|^2 + \left|\tau'(x)\right|^2}{2}
+ \frac{408 \lambda^2 }{100}
 \left(\left|\delta_\eta(x)\right|^2 + \left|\tau(x)\right|^2 \right)
\end{aligned}
\label{bound:theorem2:10}
\end{equation}
for all $x \in \mathbb{R}$.  Next we insert (\ref{bound:theorem2:2}) and
(\ref{bound:theorem2:3}) into (\ref{bound:theorem2:10}) 
in order to conclude that
\begin{equation}
\begin{aligned}
\left|S\left[\delta\right](x)\right| 
&\leq
\frac{\eta^2}{68}
+
\frac{\left|\delta_\eta'(x)\right|^2 }{2}
+ \frac{102 \lambda^2}{25} \left|\delta_\eta(x)\right|^2
\\
\end{aligned}
\end{equation}
for all $x \in \mathbb{R}$, 
which is the conclusion of the theorem.
\end{proof}

The following theorem bounds the magnitude of the Fourier transform of the product of
$\sigma$ with a decaying exponential function at the points $\pm 2 \lambda$.

%
%

\vskip 1em
\begin{theorem}
Suppose that the hypotheses of  Theorem~\ref{bound:theorem1}
are satisfied, and that $x$ is a real number.
Then
\begin{equation}
\left| 
\int_{-\infty}^\infty
\exp(\pm 2\lambda i y) 
\exp\left(-2\eta \left|x-y\right|\right)
\sigma(y)\ dy
\right|
\leq 
\frac{3 \Gamma }{2}
\exp\left(- \frac{\lambda \mu}{2}\right).
\label{bound:theorem3:conclusion}
\end{equation}
\label{bound:theorem3}
\end{theorem}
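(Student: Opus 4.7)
The approach would be to move the problem to the Fourier domain, where the hypotheses on $\widehat{\sigma}$ can be exploited directly. Setting $f(y) = \exp(\pm 2i\lambda y)\sigma(y)$ and $g(u) = \exp(-2\eta|u|)$, the integral equals the convolution $(f*g)(x)$. Since $\widehat{g}(\xi) = 4\eta/(4\eta^2+\xi^2)$ and $\widehat{f}(\xi) = \widehat{\sigma}(\xi\mp 2\lambda)$, applying the inverse Fourier transform and then changing variables gives
\begin{equation*}
\int_{-\infty}^\infty \exp(\pm 2i\lambda y)\exp(-2\eta|x-y|)\sigma(y)\, dy
= \frac{\exp(\pm 2i\lambda x)}{2\pi}\int_{-\sqrt{2}\lambda}^{\sqrt{2}\lambda}\exp(ix\zeta)\,\widehat{\sigma}(\zeta)\cdot\frac{4\eta}{4\eta^2 + (\zeta\pm 2\lambda)^2}\, d\zeta.
\end{equation*}
The integration has already been restricted to the support of $\widehat{\sigma}$, and on that range the argument of the Lorentzian satisfies $|\zeta\pm 2\lambda|\geq (2-\sqrt{2})\lambda > \lambda/2$, so the kernel is evaluated away from its peak at $\mp 2\lambda$.

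Next I would take moduli, substitute the bound $|\widehat{\sigma}(\zeta)|\leq \tfrac{3\Gamma}{2}\exp(-\mu|\zeta|)$, and split the $\zeta$-integral at $|\zeta|=\lambda/2$. On the outer piece $\lambda/2<|\zeta|\leq\sqrt{2}\lambda$ the factor $\exp(-\mu|\zeta|)\leq\exp(-\mu\lambda/2)$ pulls out, and the total Lorentzian mass $\int 4\eta/(4\eta^2+\xi^2)\,d\xi = 2\pi$ closes that piece cleanly to yield a contribution of order $\tfrac{3\Gamma}{2}\exp(-\mu\lambda/2)$. On the inner piece $|\zeta|\leq\lambda/2$ one uses the sharper denominator bound $(\zeta\pm 2\lambda)^2\geq (3\lambda/2)^2$ to control the Lorentzian, together with $\int\exp(-\mu|\zeta|)\,d\zeta\leq 2/\mu$ and the hypothesis $2\eta\leq\lambda$.

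The main obstacle will be this inner piece: the straightforward estimate there gives only polynomial decay of order $\eta/(\mu\lambda^2)$, whereas the theorem claims exponential decay. I anticipate that closing the gap requires either (a) absorbing the polynomial factor under the exponential by exploiting $\lambda\geq 4\max\{\Gamma,1/\mu\}$ together with $2\eta\leq\lambda$, or, more likely, (b) a Paley--Wiener contour shift: writing $\sigma(y)=\tfrac{1}{2\pi}\int \widehat{\sigma}(\xi)\exp(iy\xi)\,d\xi$ and shifting the $y$-contour vertically to height $\mu/4$ in the complex plane produces the factor $|\exp(\pm 2i\lambda(y+i\mu/4))|=\exp(-\mu\lambda/2)$ directly, while the Paley--Wiener bound on $|\sigma(y+iT)|$ established implicitly in the proof of Theorem~\ref{bound:theorem1} controls $\sigma$ in the strip $|\operatorname{Im}(y)|\leq \mu/2$. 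The piecewise smoothness of $\exp(-2\eta|x-y|)$ forces one to split the original integral at $y=x$ before shifting each half separately, with the sign of the modulation $\exp(\pm 2i\lambda y)$ dictating the direction of the contour shift in each half.
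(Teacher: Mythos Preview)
Your Fourier-side setup coincides with the paper's: it too recognises the integral as $\widehat{g_x\sigma}(\mp 2\lambda)$ with $g_x(y)=\exp(-2\eta|x-y|)$ and $\widehat{g_x}(\xi)=4\eta\,e^{-ix\xi}/(4\eta^2+\xi^2)$, and expands this as the convolution $\tfrac{1}{2\pi}\int\widehat{\sigma}(\pm 2\lambda-\xi)\,\widehat{g_x}(\xi)\,d\xi$. The paper does not split the integral. After inserting the pointwise bound on $|\widehat\sigma|$ it writes
\[
\bigl|\widehat{g_x\sigma}(\pm 2\lambda)\bigr|\ \leq\ \frac{3\Gamma}{4\pi}\int_{-\sqrt2\lambda}^{\sqrt2\lambda}\exp\!\bigl(-\mu\,|\pm 2\lambda-\xi|\bigr)\,\frac{4\eta}{4\eta^2+\xi^2}\,d\xi,
\]
observes that $|\pm 2\lambda-\xi|\geq(2-\sqrt2)\lambda>\lambda/2$ on this range, pulls the exponential out, and closes with $\int 4\eta/(4\eta^2+\xi^2)\,d\xi=2\pi$.

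The obstacle you flagged is genuine, and it points to a slip in the paper's display. In the convolution $\int\widehat\sigma(\pm 2\lambda-\xi)\,\widehat{g_x}(\xi)\,d\xi$ the support of $\widehat\sigma$ confines $\xi$ to the \emph{shifted} interval $|\pm 2\lambda-\xi|\leq\sqrt2\lambda$, not to $|\xi|\leq\sqrt2\lambda$; equivalently, after changing variables so that $\widehat\sigma$ is evaluated at $\zeta\in[-\sqrt2\lambda,\sqrt2\lambda]$ (your parametrisation), the exponential factor becomes $\exp(-\mu|\zeta|)$ while the Lorentzian is centred at $\mp 2\lambda$. Either way the decaying factor and the Lorentzian peak never sit on the same variable, and only a bound of order $\Gamma\eta/(\mu\lambda^2)$ survives --- exactly your inner-piece estimate. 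Neither of your proposed fixes closes this under the hypotheses of Theorem~\ref{bound:theorem1} alone: fix~(a) fails because nothing forces $\eta$ to be exponentially small (take $\eta=\lambda/2$), and in fix~(b) the vertical contour segments at $y=x$ contribute the same polynomial order.

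This does not damage the paper's programme. The sole consumer of Theorem~\ref{bound:theorem3} is Theorem~\ref{bound:theorem4}, which only needs the left-hand side of (\ref{bound:theorem3:conclusion}) to be at most $\eta/16$; the correct polynomial bound $O(\Gamma\eta/(\mu\lambda^2))$ gives that as soon as $\lambda^2$ exceeds a fixed multiple of $\Gamma/\mu$, a constraint of the same character as~(\ref{main_theorem:lambda}).
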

\begin{proof}
For any $x \in \mathbb{R}$, we define the function $g_x$ via the formula
\begin{equation}
g_x(y) = \exp\left(-2 \eta \left|x-y\right|\right).
\label{bound:theorem3:1}
\end{equation}
We observe that
\begin{equation}
\widehat{g_x}(\xi) = \frac{4 \eta \exp(-ix\xi)}{4\eta^2 + \xi^2}.
\label{bound:theorem3:2}
\end{equation}
Consequently, 
\begin{equation}
\begin{aligned}
\widehat{ \sigma \cdot g_x }(\pm 2\lambda) 
&=
\frac{1}{2\pi} \int_{-\infty}^{\infty}
\widehat{\sigma}(\pm 2\lambda - \xi)\ \widehat{g_x}(\xi)\ d\xi
= 
\frac{1}{2\pi} \int_{-\infty}^{\infty}
\widehat{\sigma}(\pm 2\lambda - \xi)
\frac{ 4 \eta \exp(-i x\xi)}{4\eta^2 + \xi^2}\ d\xi.
\end{aligned}
\label{bound:theorem3:3}
\end{equation}
We insert (\ref{bound:theorem1:assumption1})
and (\ref{bound:theorem1:assumption2}) into (\ref{bound:theorem3:3})
in order to obtain
\begin{equation}
\begin{aligned}
\left|\widehat{g_x \cdot \sigma}(\pm 2\lambda) \right|
&\leq
\frac{3\Gamma}{4\pi} \int_{-\sqrt{2}\lambda}^{\sqrt{2}\lambda}
\exp\left(- \left|\pm 2\lambda - \xi\right| \mu\right)
\frac{ 4 \eta}{4\eta^2 + \xi^2}
d\xi
\\
&\leq
\frac{3\Gamma }{4\pi}
\exp\left(- \left(2\lambda - \sqrt{2}\lambda\right)\mu\right) 
\int_{-\infty}^{\infty}
\frac{ 4 \eta}{4\eta^2 + \xi^2}
d\xi
\\
&\leq
\frac{3 \Gamma}{2}
\exp\left(- \frac{\lambda \mu}{2}\right).
\end{aligned}
\label{bound:theorem3:4}
\end{equation}
which is (\ref{bound:theorem3:conclusion}).  In (\ref{bound:theorem3:4}),
we used the (easily verifiable) fact that
\begin{equation}
\int_{-\infty}^{\infty} \frac{ 4 \eta}{4\eta^2 + \xi^2} d\xi = 2\pi
\ \ \ \mbox{for all}\ \ \eta > 0.
\end{equation}
\end{proof}

We now combine Theorems~\ref{bound:theorem1}, \ref{bound:theorem2}
and \ref{bound:theorem3} to develop  a  bound on the solution 
$\delta_\eta$ of the complexified equation
\begin{equation}
\delta_\eta''(x) + 4(\lambda+i\eta)^2 \delta_\eta(x) = \sigma(x) = 
S\left[\delta\right](x) + p(x) + \nu(x)
\end{equation}
and its derivative on the interval $(-\infty,x_0]$ under the assumption
that the function $p$ is of small magnitude there.

%
%

\vskip 1em
\begin{theorem}
Suppose that the hypotheses of Theorem~\ref{main_theorem} are satisfied, and that
$\sigma$ and $\nu$ are the functions obtained by invoking it.
Suppose further that $\eta >0$ and $x_0$ are real numbers, that
\begin{equation}
\lambda \geq \max\left\{
\frac{4}{\mu}, 
4\Gamma,
2\eta,
\frac{2}{\mu} \log\left(\frac{24\Gamma}{\eta}\right),
\frac{1}{\mu} \log\left(\frac{16\Gamma}{\mu \eta^2}\right)
\right\},
\label{bound:theorem4:lambda}
\end{equation}
and that
\begin{equation}
\left| p(x)\right| \leq \frac{ \eta^2}{16}
\ \ \ \mbox{for all}\ \ x \leq x_0.
\label{bound:theorem4:pandv}
\end{equation}
Suppose also that $\delta_\eta$ is defined via the formula
\begin{equation}
\delta_\eta(x) = T_\eta\left[\sigma\right](x).
\label{bound:theorem4:delta_eta}
\end{equation}
Then 
\begin{equation}
\left|\delta_\eta(x) \right| \leq \frac{\eta}{4 \lambda}
\label{bound:theorem4:conclusion1}
\end{equation}
and
\begin{equation}
\left|\delta'_\eta(x) \right| \leq   \frac{\eta}{4}
\label{bound:theorem4:conclusion2}
\end{equation}
for all $x \leq x_0$.

\label{bound:theorem4}
\end{theorem}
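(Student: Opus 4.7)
The plan is a continuity (bootstrap) argument on $(-\infty, x_0]$, combining Theorem~\ref{bound:theorem3} (to handle the oscillatory part of the integral kernel defining $\delta_\eta$) with Theorem~\ref{bound:theorem2} (to handle the ``local'' part under the bootstrap hypothesis, which together with the assumptions on $p$ and $\nu$ forces $\sigma$ to be small on the interval where the hypothesis is valid). Since $\delta_\eta$ and $\delta_\eta'$ vanish at $-\infty$ by (\ref{bound:theorem1:conclusion1}), the set
\begin{equation}
E = \left\{x \leq x_0 : |\delta_\eta(y)| \leq \eta/(4\lambda)\ \text{and}\ |\delta_\eta'(y)| \leq \eta/4\ \text{for all}\ y \leq x\right\}
\end{equation}
is nonempty; put $x^{*} = \sup E$ and assume toward contradiction that $x^{*} < x_0$, so that by continuity equality holds in at least one of the two bounds at $y = x^{*}$.

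The crux is to decompose the integral representation (\ref{bound:theorem1:deltaeta}) as $\delta_\eta(x) = A(x) + B(x)$, where $A$ is a scalar multiple of $\int_{-\infty}^{\infty} e^{2\lambda i y}e^{-2\eta|x-y|}\sigma(y)\,dy$ (to which Theorem~\ref{bound:theorem3} applies directly) and $B$ is a half-line integral over $(-\infty, x]$. This decomposition comes from the elementary observation that $e^{2\lambda i|x-y|}$ equals $e^{-2\lambda i(x-y)}$ for $y \geq x$ and equals $e^{-2\lambda i(x-y)} + 2i\sin(2\lambda(x-y))$ for $y < x$, so that the second summand is supported on $\{y<x\}$. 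A parallel splitting $\delta_\eta'(x) = A_1(x) + B_1(x)$ follows from the analogous identity for $\sign(x-y)e^{2\lambda i|x-y|}$, namely $-e^{-2\lambda i(x-y)}$ globally plus a $2\cos(2\lambda(x-y))$ correction supported on $\{y<x\}$.

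Theorem~\ref{bound:theorem3} bounds $|A(x)|$ by $\tfrac{3\Gamma}{8\lambda}e^{-\mu\lambda/2}$ and $|A_1(x)|$ by $\tfrac{3\Gamma}{4}e^{-\mu\lambda/2}$; the hypothesis $\lambda \geq (2/\mu)\log(24\Gamma/\eta)$ converts these into $\eta/(64\lambda)$ and $\eta/32$ respectively. Only values $\sigma(y)$ with $y \leq x \leq x^{*}$ appear in $B$ (and $B_1$); writing $\sigma = S[T[\sigma]] + p + \nu$, the assumption (\ref{bound:theorem4:pandv}) gives $|p(y)| \leq \eta^2/16$; the lower bound $\lambda \geq (1/\mu)\log(16\Gamma/(\mu\eta^2))$ combined with (\ref{main_theorem:nu}) gives $|\nu(y)| \leq \eta^2/16$; and Theorem~\ref{bound:theorem2}, applied under the bootstrap hypothesis, bounds $|S[T[\sigma]](y)|$ by roughly $0.30\,\eta^2$. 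Integrating $e^{-2\eta(x-y)}|\sigma(y)|$ over $(-\infty, x]$ yields a factor $1/(2\eta)$, and after dividing by $|\lambda+i\eta| \geq \lambda$ where appropriate one obtains $|B(x)| < \eta/(4\lambda)$ and $|B_1(x)| < \eta/4$, contradicting $x^{*} < x_0$ and establishing $x^{*} = x_0$.

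The principal obstacle is purely arithmetic: the coefficient $102/25$ in Theorem~\ref{bound:theorem2} is the dominant contribution (about $0.255\,\eta^2$ under the bootstrap), and once all the pieces are assembled the numerical margin for the $\delta_\eta'$ bound is tight --- roughly $0.24\,\eta$ against the target $\eta/4$. Each of the five lower bounds on $\lambda$ in (\ref{bound:theorem4:lambda}) is needed to force the corresponding summand strictly below its share of the target, and the constants $24$ and $16$ appearing inside the logarithms have been selected precisely to make the bootstrap close with a strict (though thin) margin.
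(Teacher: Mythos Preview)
Your proposal is correct and follows essentially the same route as the paper's proof. The paper also runs a bootstrap on the set you call $E$, splits $\delta_\eta$ and $\delta_\eta'$ into a full-line piece (bounded via Theorem~\ref{bound:theorem3}) plus a half-line integral over $(-\infty,x]$, bounds $\sigma = S[\delta]+p+\nu$ on that half-line using Theorem~\ref{bound:theorem2} together with the hypotheses on $p$ and $\nu$, and closes with the same tight arithmetic (the paper obtains $6643\eta/(54400\lambda)$ and roughly $0.21\eta$--$0.24\eta$ for the two bounds, exactly in line with your estimates). Your kernel identity $e^{2\lambda i|x-y|}=e^{-2\lambda i(x-y)}+2i\sin(2\lambda(x-y))\mathbf{1}_{y<x}$ is just the algebraic transpose of the paper's device of writing $\int_x^\infty=\int_{-\infty}^\infty-\int_{-\infty}^x$, and yields identical numerical bounds.
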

\begin{proof}

From the definition (\ref{bound:Teta}) of $T_\eta$ and 
(\ref{bound:theorem4:delta_eta}) we conclude that
\begin{equation}
\begin{aligned}
\delta_\eta (x) 
&=
\frac{1}{4(\lambda+i\eta)i} \int_{-\infty}^{\infty}
\exp\left(2(\lambda+i\eta)i\left|x-y\right|\right)\ \sigma(y)\ dy
\\
&=
\frac{\exp(2\lambda i x)}{4(\lambda+i\eta)i} \int_{-\infty}^{x}
\exp\left(-2\eta \left|x-y\right|\right) \exp(-2\lambda iy)
\ \sigma(y)\ dy
\\
&+
\frac{\exp(-2\lambda i x)}{4(\lambda+i\eta)i} \int_{x}^{\infty}
\exp(-2 \eta \left|x-y\right|) 
\exp(2\lambda i y)
\ \sigma(y)\ dy
\end{aligned}
\label{bound:theorem4:1}
\end{equation}
for all $x \in \mathbb{R}$.  By differentiating (\ref{bound:theorem4:1})
we see that
\begin{equation}
\begin{aligned}
\delta_\eta' (x) &= 
\frac{1}{2} \int_{-\infty}^{\infty}
\exp\left(2(\lambda+i\eta)i\left|x-y\right|\right)\ 
\sign(x-y)
\  \sigma(y)\ dy
\\
&=
\frac{\exp(2\lambda i x)}{2} \int_{-\infty}^{x}
\exp\left(-2\eta \left|x-y\right|\right) \exp(-2\lambda iy)
\ \sigma(y)\ dy
\\
&-
\frac{\exp(-2\lambda i x)}{2} \int_{x}^{\infty}
\exp(-2 \eta \left|x-y\right|) 
\exp(2\lambda i y)
\  \sigma(y)\ dy
\end{aligned}
\label{bound:theorem4:2}
\end{equation}
 for all $x \in \mathbb{R}$. We observe that
\begin{equation}
\begin{aligned}
\int_{x}^{\infty}  \exp(-2 \eta \left|x-y\right|)  \exp(2\lambda i y) \  \sigma(y)\ dy
&=
\int_{-\infty}^\infty  \exp(-2 \eta \left|x-y\right|)  \exp(2\lambda i y) \  \sigma(y)\ dy
\\
&-
\int_{-\infty}^x  \exp(-2 \eta \left|x-y\right|)  \exp(2\lambda i y) \  \sigma(y)\ dy
\end{aligned}
\label{bound:theorem4:3}
\end{equation}
for all $x \in \mathbb{R}$.  
By inserting (\ref{bound:theorem4:3}) into (\ref{bound:theorem4:1}),
taking absolute values, and applying the triangle inequality
we obtain
\begin{equation}
\begin{aligned}
\left|\delta_\eta (x)\right|
&\leq
\frac{1}{2\lambda} 
\int_{-\infty}^{x}
\exp\left(-2\eta \left|x-y\right|\right)
\ \left|\sigma(y)\right|\ dy
\\
&+
\frac{1}{4\lambda}
\left|
\int_{-\infty}^\infty  \exp(-2 \eta \left|x-y\right|)  \exp(2\lambda i y) \  \sigma(y)\ dy
\right|
\end{aligned}
\label{bound:theorem4:4}
\end{equation}
for all $x \in \mathbb{R}$.
By inserting (\ref{bound:theorem4:3}) into (\ref{bound:theorem4:2}) and
taking absolute values, we conclude that
\begin{equation}
\begin{aligned}
\left|\delta_\eta' (x)\right|
&\leq
\int_{-\infty}^{x}
\exp\left(-2\eta \left|x-y\right|\right)
\ \left|\sigma(y)\right|\ dy
\\
&+
\frac{1}{2}
\left|
\int_{-\infty}^\infty  \exp(-2 \eta \left|x-y\right|)  \exp(2\lambda i y) \  \sigma(y)\ dy
\right|
\end{aligned}
\label{bound:theorem4:5}
\end{equation}
for all $x \in \mathbb{R}$.  We now combine  Theorem~\ref{bound:theorem3}
with our assumption that
\begin{equation}
\lambda \geq 
\frac{2}{\mu} \log\left(\frac{24\Gamma}{\eta}\right).
\end{equation}
which is part of (\ref{bound:theorem4:lambda}), in order to conclude that
\begin{equation}
\begin{aligned}
\left|
\int_{-\infty}^\infty  \exp(-2 \eta \left|x-y\right|)  \exp(2\lambda i y) \  \sigma(y)\ dy
\right|
\leq 
\frac{3\Gamma}{2} \exp\left(-\frac{\mu\lambda}{2}\right)
\leq \frac{\eta}{16}
\end{aligned}
\label{bound:theorem4:6}
\end{equation}
for all $x \in \mathbb{R}$.
We insert (\ref{bound:theorem4:6}) into (\ref{bound:theorem4:4}) and
(\ref{bound:theorem4:5}) in order to obtain the inequalities
\begin{equation}
\begin{aligned}
\left|\delta_\eta (x)\right|
&\leq
\frac{1}{2\lambda} 
\int_{-\infty}^{x}
\exp\left(-2\eta \left|x-y\right|\right)
\ \left|\sigma(y)\right|\ dy
+
\frac{\eta}{64\lambda}
\ \ \ \mbox{for all} \ \ x\in\mathbb{R}
\end{aligned}
\label{bound:theorem4:7}
\end{equation}
and
\begin{equation}
\begin{aligned}
\left|\delta_\eta' (x)\right|
&\leq \int_{-\infty}^{x} \exp\left(-2\eta \left|x-y\right|\right)
\ \left|\sigma(y)\right|\ dy +
\frac{\eta}{32}
\ \ \ \mbox{for all} \ \ x\in\mathbb{R}.
\end{aligned}
\label{bound:theorem4:8}
\end{equation}
Now we define $\delta$ via the formula
\begin{equation}
\delta(x) = T\left[\sigma\right](x).
\label{bound:theorem4:delta}
\end{equation}
From (\ref{main_theorem:inteq}) and (\ref{bound:theorem4:delta}) we conclude that 
\begin{equation}
\sigma(x) = S\left[\delta\right](x) + p(x) + \nu(x)
\ \ \ \mbox{for all}\ \ x\in\mathbb{R}.
\label{bound:theorem4:9}
\end{equation}
We combine conclusion (\ref{main_theorem:nu}) of Theorem~\ref{main_theorem}
with the assumption that
\begin{equation}
\lambda \geq
\frac{1}{\mu} \log\left(\frac{16\Gamma}{\mu \eta^2}\right)
\end{equation}
which is part of (\ref{bound:theorem4:lambda}), in order to conclude that
\begin{equation}
\left\|v\right\|_\infty \leq \frac{\Gamma}{\mu} \exp(-\mu\lambda)
\leq 
\frac{\eta^2}{16}.
\label{bound:theorem4:9.5}
\end{equation}
We combine (\ref{bound:theorem4:pandv}), (\ref{bound:theorem4:9.5}) and the fact that
\begin{equation}
\int_{-\infty}^x \exp(-2\eta \left|x-y\right|) dy = \frac{1}{2\eta}
\ \ \ \mbox{for all}\ \ x\in\mathbb{R}
\label{bound:theorem4:10}
\end{equation}
in order to conclude that
\begin{equation}
\int_{-\infty}^x \exp(-2\eta \left|x-y\right|) 
\left(\left|p(y)\right| + \left|\nu(y)\right|\right)\ dy
\leq 
\frac{\eta}{16}
\label{bound:theorem4:11}
\end{equation}
for all $x \leq x_0$.  
We combine Theorem~\ref{bound:theorem2} 
with (\ref{bound:theorem4:10}) in order to conclude that
\begin{equation}
\int_{-\infty}^x \exp\left(-2\eta\left|x-y\right|\right) \left|S\left[\delta\right](y)\right|\ dy
\leq 
\frac{\eta}{136}
+ \frac{102\lambda^2}{50\eta}
\sup_{-\infty \leq y \leq x} \left|\delta_\eta(y)\right|^2 
+ \frac{1}{4\eta}
\sup_{-\infty \leq y \leq x} \left|\delta_\eta'(y)\right|^2 
\label{bound:theorem4:12}
\end{equation}
for all $x \in \mathbb{R}$.  By combining (\ref{bound:theorem4:9}), 
(\ref{bound:theorem4:11}) and (\ref{bound:theorem4:12}) we 
conclude that
\begin{equation}
\int_{-\infty}^x \exp(-2\eta \left|x-y\right|)\left|\sigma(y)\right| \ dy
\leq 
\frac{19\eta}{272}
+
\frac{102 \lambda^2 }{50 \eta}
\sup_{-\infty \leq y \leq x}
\left|\delta_\eta(x)\right|^2 + 
\frac{1}{4\eta}
\sup_{-\infty \leq y \leq x}
\left|\delta_\eta'(x)\right|^2
\label{bound:theorem4:13}
\end{equation}
for all $x \leq x_0$.  Inserting (\ref{bound:theorem4:13}) into
(\ref{bound:theorem4:7}) and (\ref{bound:theorem4:8}) yields
the inequalities
\begin{equation}
\begin{aligned}
\left|\delta_\eta(x)\right|
\leq 
\frac{55\eta}{1088\lambda}
+
\frac{102 \lambda }{100 \eta }
\sup_{-\infty \leq y \leq x}
\left|\delta_\eta(x)\right|^2 + 
\frac{19}{272\eta \lambda}
\sup_{-\infty \leq y \leq x}
\left|\delta_\eta'(x)\right|^2
\end{aligned}
\label{bound:theorem4:14}
\end{equation}
and
\begin{equation}
\begin{aligned}
\left|\delta_\eta'(x)\right|
\leq 
\frac{9\eta}{136}
+
\frac{102 \lambda^2 }{50 \eta}
\sup_{-\infty \leq y \leq x}
\left|\delta_\eta(x)\right|^2 + 
\frac{1}{4\eta}
\sup_{-\infty \leq y \leq x}
\left|\delta_\eta'(x)\right|^2,
\end{aligned}
\label{bound:theorem4:15}
\end{equation}
both of which hold for all $x \leq x_0$.

We denote by $\Omega$ the set 
\begin{equation}
\left\{
x \leq x_0 : 
\left|\delta_\eta(y)\right| \leq \frac{\eta}{4 \lambda} \ \ \mbox{and}\ \ \ 
\left|\delta_\eta'(y)\right| \leq \frac{\eta}{4}\ \ \ \mbox{for all}\ \  
y \in (-\infty,x]
\right\}.
\label{bound:theorem4:16}
\end{equation}
Conclusion (\ref{bound:theorem1:conclusion1})
of Theorem~\ref{bound:theorem1} implies that $\Omega$ is nonempty.
We let $x^*$ denote the supremum of the set $\Omega$.  
Suppose  that $x^* < x_0$.  By inserting the inequalities
\begin{equation}
\left|\delta_\eta(y)\right|  \leq 
\frac{\eta}{4\lambda}
\ \ \ \mbox{for  all}\ \ y \leq x^*
\label{bound:theorem4:17}
\end{equation}
and
\begin{equation}
\left|\delta'_\eta(y)\right| \leq \frac{\eta }{4}
\ \ \ \mbox{for  all}\ \ y \leq x^*
\label{bound:theorem4:18}
\end{equation}
into (\ref{bound:theorem4:14}) and (\ref{bound:theorem4:15}) we conclude that
\begin{equation}
\begin{aligned}
\left|\delta_\eta(x)\right|
&\leq 
\frac{55\eta}{1088\lambda}
+
\frac{102  \eta}{1600 \lambda } 
+
\frac{\eta}{128 \lambda}
= \frac{6643\eta}{54400\lambda}
< \frac{\eta}{4\lambda}
\end{aligned}
\label{bound:theorem4:19}
\end{equation}
and
\begin{equation}
\begin{aligned}
\left|\delta_\eta'(x)\right|
\leq 
\frac{19 \eta}{272}
+
\frac{102 \eta}{800}
+
\frac{\eta}{64} 
= \frac{5693 \eta}{27200}
<
\frac{\eta}{4}
\end{aligned}
\label{bound:theorem4:20}
\end{equation}
for all $x \leq x^*$. 
We conclude from  (\ref{bound:theorem4:19}), (\ref{bound:theorem4:20})
and the continuity of  $\delta_\eta$ and $\delta_\eta'$ 
that there exists
a point $x_2 > x^*$ which is contained in $\Omega$.  This contradicts
our assumption that $x^* = \sup \Omega < x_0$, so
the supremum of $\Omega$ must, in fact, be $x_0$.
The conclusions 
 (\ref{bound:theorem4:conclusion1}) and (\ref{bound:theorem4:conclusion2}) follow
immediately.
\end{proof}


We now combine Theorems~\ref{bound:theorem1}, \ref{bound:theorem2},
\ref{bound:theorem3} and \ref{bound:theorem4} in order to establish
establish the principal result of this section, which is a bound
on the restriction of the nonoscillatory solution $\delta$ of the nonlinear differential
equation
\begin{equation}
\delta''(x) + 4\lambda^2 \delta(x) = S\left[\delta\right](x) + p(x) + \nu(x)
\end{equation}
to the interval $(-\infty,x_0]$ under the assumption that $p$ is of small magnitude there.

%
%

\vskip 1em
\begin{theorem}
Suppose that the hypotheses of Theorem~\ref{main_theorem} are satisfied, and that
$\sigma$ and $\nu$ are the functions obtained by invoking it.
Suppose further that $C_1$ is the real number
\begin{equation}
C_1 = \max\left\{24 \Gamma,\sqrt{\frac{16\Gamma}{\mu}}\right\},
\label{bound:theorem5:C1}
\end{equation}
and that 
\begin{equation}
\lambda \geq 
\max\left\{
4\Gamma,\frac{4}{\mu},
\frac{\Gamma}{4\mu},
\frac{2}{\mu} W_0\left(C_1\mu\right)
\right\}.
\label{bound:theorem5:lambda1}
\end{equation}
Suppose also that $x_0$ is a real number, that
\begin{equation}
\left| p(x)\right| \leq  
\frac{C_1^2}{16} \exp\left(-\mu\lambda\right)
\ \ \ \mbox{for all}\ \ x \leq x_0,
\label{bound:theorem5:pandv}
\end{equation}
and that $\delta$ is defined via the formula
\begin{equation}
\delta(x) = T\left[\sigma\right](x).
\label{bound:theorem5:delta_eta}
\end{equation}
Then 
\begin{equation}
\left|\delta(x)\right| \leq
\frac{C_1}{2\lambda} \exp\left(-\frac{\mu\lambda}{2}\right)
\label{bound:theorem5:conclusion1}
\end{equation}
and
\begin{equation}
\left|\delta'(x)\right| \leq
\frac{C_1}{2} \exp\left(-\frac{\mu\lambda}{2}\right)
\label{bound:theorem5:conclusion2}
\end{equation}
for all $x \leq x_0$.
\label{bound:theorem5}
\end{theorem}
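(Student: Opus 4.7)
My plan is to reduce this statement to Theorem~\ref{bound:theorem4} by tuning the free parameter $\eta$ so that the bound on $p$ in (\ref{bound:theorem4:pandv}) becomes exactly (\ref{bound:theorem5:pandv}). Setting
\begin{equation}
\eta = C_1 \exp\left(-\frac{\mu\lambda}{2}\right)
\end{equation}
makes $\eta^2/16 = (C_1^2/16)\exp(-\mu\lambda)$, so the pointwise bound on $p$ assumed in Theorem~\ref{bound:theorem5} matches the hypothesis of Theorem~\ref{bound:theorem4} verbatim.

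Next, I would verify that with this choice of $\eta$, the five conditions on $\lambda$ appearing in (\ref{bound:theorem4:lambda}) all follow from (\ref{bound:theorem5:lambda1}). The inequalities $\lambda \geq 4/\mu$ and $\lambda \geq 4\Gamma$ are immediate. The condition $\lambda \geq 2\eta = 2C_1\exp(-\mu\lambda/2)$ is, after multiplying by $\mu/2$ and exponentiating, equivalent to $(\mu\lambda/2)\exp(\mu\lambda/2) \geq C_1\mu$, which by Theorem~\ref{preliminaries:LambertW:theorem1} is in turn equivalent to $\mu\lambda/2 \geq W_0(C_1\mu)$---exactly the last entry in the maximum (\ref{bound:theorem5:lambda1}). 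The two logarithmic conditions collapse after substituting the form of $\eta$: $\lambda \geq (2/\mu)\log(24\Gamma/\eta)$ reduces to $C_1 \geq 24\Gamma$ and $\lambda \geq (1/\mu)\log(16\Gamma/(\mu\eta^2))$ reduces to $C_1^2 \geq 16\Gamma/\mu$, both built into the definition (\ref{bound:theorem5:C1}).

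Having secured the hypotheses, Theorem~\ref{bound:theorem4} yields
\begin{equation}
\left|\delta_\eta(x)\right| \leq \frac{C_1}{4\lambda}\exp\left(-\frac{\mu\lambda}{2}\right)
\quad\text{and}\quad
\left|\delta_\eta'(x)\right| \leq \frac{C_1}{4}\exp\left(-\frac{\mu\lambda}{2}\right)
\end{equation}
for all $x \leq x_0$. To pass from $\delta_\eta$ back to $\delta$, I would use the conclusions (\ref{bound:theorem1:conclusion4}) and (\ref{bound:theorem1:conclusion5}) of Theorem~\ref{bound:theorem1}, which (with this $\eta$) give
\begin{equation}
\left|\delta(x) - \delta_\eta(x)\right| \leq \frac{3\Gamma C_1}{\pi\mu\lambda^3}\exp\left(-\frac{\mu\lambda}{2}\right),
\qquad
\left|\delta'(x) - \delta_\eta'(x)\right| \leq \frac{3\Gamma C_1}{\pi\mu^2\lambda^3}\exp\left(-\frac{\mu\lambda}{2}\right).
\end{equation}
Applying the triangle inequality and factoring the exponential, (\ref{bound:theorem5:conclusion1}) and (\ref{bound:theorem5:conclusion2}) reduce to the elementary inequalities $3\Gamma/(\pi\mu\lambda^2) \leq 1/4$ and $3\Gamma/(\pi\mu^2\lambda^3) \leq 1/4$, both of which follow from $\lambda \geq 4\Gamma$ together with $\lambda \geq 4/\mu$ (which imply $\lambda^2 \geq 16\Gamma/\mu$ and $\lambda^3 \geq 64\Gamma/\mu^2$).

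There is no deep obstacle here; the work is essentially a clean bookkeeping exercise in calibrating $\eta$. The one spot where attention is needed is the translation of the transcendental inequality $\lambda \geq 2C_1\exp(-\mu\lambda/2)$ into a condition purely in $\lambda$, $\Gamma$, and $\mu$---this is the whole reason the Lambert $W$ function appears in (\ref{bound:theorem5:lambda1}), and it is why Section~\ref{section:preliminaries:LambertW} was included among the preliminaries.
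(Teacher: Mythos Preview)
Your proposal is correct and follows essentially the same approach as the paper: set $\eta = C_1\exp(-\mu\lambda/2)$, verify that the five hypotheses of Theorem~\ref{bound:theorem4} follow from (\ref{bound:theorem5:lambda1}) and (\ref{bound:theorem5:C1}), apply Theorem~\ref{bound:theorem4} to bound $\delta_\eta$ and $\delta_\eta'$, and then pass to $\delta$ and $\delta'$ via the difference estimates (\ref{bound:theorem1:conclusion4})--(\ref{bound:theorem1:conclusion5}) of Theorem~\ref{bound:theorem1} together with the triangle inequality. Your handling of the two logarithmic conditions and of the Lambert-$W$ step is in fact slightly cleaner than the paper's presentation.
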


\vskip 1em
\begin{remark}
In (\ref{bound:theorem5:lambda1}), $W_0$ refers to the branch of the Lambert $W$
function which is greater than or equal to $-1$ on the interval $[-1/e,\infty)$;
see Section~\ref{section:preliminaries:LambertW}.
\end{remark}

\begin{proof}
We let 
\begin{equation}
\eta = C_1 \exp\left(-\frac{\mu\lambda}{2}\right).
\label{bound:theorem5:1}
\end{equation}
From our assumption that
\begin{equation}
\lambda \geq \frac{2}{\mu}W_0\left(C_1 \mu \right),
\end{equation}
which is part of  (\ref{bound:theorem5:lambda1}),
 and Theorem~\ref{preliminaries:LambertW:theorem1}
we conclude that
\begin{equation}
\lambda \geq  2 C_1 \exp\left(\frac{-\mu\lambda}{2}\right) = 2\eta.
\label{bound:theorem5:1.5}
\end{equation}
Moreover, by inserting (\ref{bound:theorem5:C1}) into 
(\ref{bound:theorem5:1.5}) and invoking  Theorem~\ref{preliminaries:LambertW:theorem1}
we obtain
\begin{equation}
\eta \leq 24 \Gamma \exp\left(\frac{-\mu\lambda}{2}\right).
\label{bound:theorem5:1.6}
\end{equation}
and
\begin{equation}
\eta^2 \leq \frac{16\Gamma}{\eta} \exp\left(-\mu\lambda\right).
\label{bound:theorem5:1.7}
\end{equation}
It follows immediately from (\ref{bound:theorem5:1.6}) 
and (\ref{bound:theorem5:1.7}) that
\begin{equation}
\lambda \geq 
\max\left\{
\frac{2}{\mu} \log\left(\frac{24\Gamma}{\eta}\right),
\frac{1}{\mu} \log\left(\frac{16\Gamma}{\mu \eta^2}\right)
\right\}.
\label{bound:theorem5:2}
\end{equation}
Together (\ref{bound:theorem5:1}), (\ref{bound:theorem5:2})
and (\ref{bound:theorem5:lambda1}) ensure that 
the hypothesis (\ref{bound:theorem4:lambda}) of Theorem~\ref{bound:theorem4}
is satisfied.
From  (\ref{bound:theorem5:1}) and
(\ref{bound:theorem5:pandv}), we conclude that
\begin{equation}
\left|p(x)\right| \leq \frac{\eta^2}{16},
\label{bound:theorem5:4}
\end{equation}
so the hypothesis (\ref{bound:theorem4:pandv}) of Theorem~\ref{bound:theorem4}
is satisfied as well.
By invoking Theorem~\ref{bound:theorem4} we see that
\begin{equation}
\left|\delta_\eta(x) \right| \leq \frac{\eta}{4 \lambda}
\label{bound:theorem5:5}
\end{equation}
and
\begin{equation}
\left|\delta'_\eta(x) \right| \leq   \frac{\eta}{4}
\label{bound:theorem5:6}
\end{equation}
for all $x \leq x_0$.
We insert (\ref{bound:theorem5:1}) into (\ref{bound:theorem5:5}) and (\ref{bound:theorem5:6}) 
in order to see that
\begin{equation}
\left|\delta_\eta(x)\right| \leq
\frac{C_1}{4\lambda} \exp\left(-\frac{\mu\lambda}{2}\right)
\label{bound:theorem5:7}
\end{equation}
and
\begin{equation}
\left|\delta_\eta'(x)\right| \leq
\frac{C_1}{4} \exp\left(-\frac{\mu\lambda}{2}\right)
\label{bound:theorem5:8}
\end{equation}
for all $x \leq x_0$.  We combine 
the hypotheses (\ref{main_theorem:lambda}) of Theorem~\ref{main_theorem},
 and conclusions (\ref{bound:theorem1:conclusion4}) 
and (\ref{bound:theorem1:conclusion5}) 
of Theorem~\ref{bound:theorem1}   in order to obtain
\begin{equation}
\left|\delta(x)- \delta_\eta(x)\right| \leq \frac{3\Gamma\eta}{\pi \mu\lambda^2}
\leq  
  \frac{3}{16\pi} C_1\exp\left(-\frac{\mu\lambda}{2}\right).
\label{bound:theorem5:9}
\end{equation}
Similarly, from  (\ref{bound:theorem5:lambda1})
 and conclusions (\ref{bound:theorem1:conclusion4}) 
and (\ref{bound:theorem1:conclusion5}) 
of Theorem~\ref{bound:theorem1}   in order to obtain
we obtain
\begin{equation}
\left|\delta'(x)- \delta_\eta'(x)\right| \leq \frac{3 \Gamma \eta}{\mu^2 \pi \lambda^2}
\leq 
\frac{3}{16\pi}
 C_1\exp\left(-\frac{\mu\lambda}{2}\right).
\label{bound:theorem5:10}
\end{equation}
We combine (\ref{bound:theorem5:9}) with (\ref{bound:theorem5:7}) in order
to obtain (\ref{bound:theorem5:conclusion1}), and (\ref{bound:theorem5:10})
with (\ref{bound:theorem5:8}) in order to obtain (\ref{bound:theorem5:conclusion2}).
\end{proof}


\label{section:bound}
\end{subsection}

\begin{subsection}{A continuity result}

In this section, we use standard techniques from the theory of
ordinary differential equations to
bound the difference between the solution $\delta_0$ of 
the differential equation
\begin{equation}
\delta_0''(x) + 4\lambda^2 \delta_0(x) 
=
S\left[\delta_0\right](x) + p(x) + \eta(x)
\ \ \ \mbox{for all}\ \ x_0 \leq x \leq x_1
\label{continuity:diffeq}
\end{equation}
and the nonoscillatory solution $\delta$ of (\ref{kummer:deltaeq})
obtained from Theorem~\ref{main_theorem}
 under the assumptions that  the function $\eta$ and the quantities
\begin{equation}
\left|\delta(x_0) - \delta_0(x_0)\right|,\ 
\left|\delta'(x_0) - \delta_0'(x_0)\right|
\end{equation}
are of small magnitude.  
   We will also make the assumption
that the interval $[x_0,x_1]$ is of length less than or equal to $1$,
which is sufficient for our purposes.  Indeed, in all cases we will consider
the interval $[x_0,x_1]$ is contained in $[0,x(b)]$, where
$x$ is the function defined via the formula.
\begin{equation}
x(t) = \int_a^t \sqrt{q(u)}\ du.
\end{equation}
By scaling the parameter $\lambda^2$ and the coefficient $q$, we can 
assume without loss of generality that 
\begin{equation}
x(b) = \int_a^b \sqrt{q(u)}\ du \leq 1.
\end{equation}

%
%
\vskip 1em
\begin{theorem}
Suppose that $S$ is the nonlinear differential operator defined via (\ref{kummer:S}), and that
$f$, $g$ are continuously differentiable functions.  Then
\begin{equation}
\begin{aligned}
\left|S\left[f\right](x) \right.&\left.- S\left[g\right](x)\right|
\leq 
\frac{\left|f'(x) + g'(x)\right|}{4}
\left|f'(x)-g'(x)\right|\\
&+
2\lambda^2 \exp\left(\left|f(x)\right|\right)
 \exp\left(\left|f(x)-g(x)\right|\right)
\left|f(x) - g(x)\right|^2
\\
&+
4\lambda^2 \exp\left(\left|f(x)\right|\right)\left|f(x) - g(x)\right|
\left|f(x)\right|
\end{aligned}
\label{continuity:theorem1:conclusion}
\end{equation}
for all $x \in \mathbb{R}$.
\label{continuity:theorem1}
\end{theorem}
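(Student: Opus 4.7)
The plan is a direct computation: recognize that the series in the definition of $S$ is just $\exp(f(x)) - f(x) - 1$, so I can write
\begin{equation}
S[f](x) = \frac{(f'(x))^2}{4} - 4\lambda^2\bigl(\exp(f(x)) - f(x) - 1\bigr),
\end{equation}
and split $S[f] - S[g]$ into a derivative part and an exponential part. The derivative part is handled immediately by the identity $(f')^2 - (g')^2 = (f'+g')(f'-g')$, producing the first term on the right-hand side of \eqref{continuity:theorem1:conclusion}.

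For the exponential part I must show
\begin{equation}
\bigl|\,(\exp(f) - f) - (\exp(g) - g)\,\bigr|
\le \tfrac{1}{2}\exp(|f|)\exp(|f-g|)|f-g|^2
+ \exp(|f|)\,|f|\,|f-g|.
\end{equation}
The key observation is that the bound is asymmetric in $f$ and $g$: only $\exp(|f|)$ and $|f|$ appear. This forces me to factor $\exp(f)$ out of the difference, writing
\begin{equation}
\exp(f) - \exp(g) = -\exp(f)\bigl(\exp(g-f) - 1\bigr).
\end{equation}
Adding and subtracting $(g-f)$ inside and regrouping gives
\begin{equation}
(\exp(f) - f) - (\exp(g) - g)
= -\exp(f)\bigl(\exp(g-f) - 1 - (g-f)\bigr) - (g-f)\bigl(\exp(f) - 1\bigr).
\end{equation}
Then I apply the two elementary inequalities
\begin{equation}
|\exp(x) - 1 - x| \le \tfrac{1}{2} x^2 \exp(|x|),\qquad
|\exp(x) - 1| \le |x|\exp(|x|),
\end{equation}
which were already used inside the preceding proofs (see \eqref{bound:theorem2:5}), to the two summands with $x = g-f$ and $x = f$ respectively. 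Multiplying by $4\lambda^2$ and collecting terms yields the last two terms of \eqref{continuity:theorem1:conclusion}.

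The only nontrivial choice is which variable to factor out in the exponential difference; once the asymmetric factorization $\exp(f) - \exp(g) = -\exp(f)(\exp(g-f)-1)$ is in hand, the rest is mechanical. I would expect the write-up to be short, with the bulk of the work being bookkeeping rather than analysis.
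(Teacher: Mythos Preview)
Your proposal is correct and follows essentially the same route as the paper's proof: the paper also splits $S$ into the quadratic derivative piece and the piece $\exp(h)-h-1$, handles the first by the factorization $(f')^2-(g')^2=(f'+g')(f'-g')$, and for the second uses exactly the asymmetric identity you wrote together with the two inequalities $|\exp(x)-1-x|\le \tfrac12|x|^2\exp(|x|)$ and $|\exp(x)-1|\le |x|\exp(|x|)$.
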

\begin{proof}
We define the operator $S_1$ via the formula
\begin{equation}
S_1\left[h\right](x) = \frac{(h'(x))^2}{4},
\label{continuity:theorem1:1}
\end{equation}
and the operator $S_2$ via the formula
\begin{equation}
S_2\left[h\right](x) = \exp(h(x))-h(x)-1
\label{continuity:theorem1:2}
\end{equation}
so that 
\begin{equation}
S\left[h\right](x) = S_1\left[h\right](x) + 4\lambda^2 S_2\left[h\right](x)
\label{continuity:theorem1:3}
\end{equation}
for all $x \in \mathbb{R}$.    We observe that
\begin{equation}
\left|S_1\left[f\right](x) - S_1\left[g\right](x)\right| \leq
\frac{\left|f'(x) + g'(x)\right|}{4}
\left|f'(x)-g'(x)\right|,
\label{continuity:theorem1:3.6}
\end{equation}
and that
\begin{equation}
\begin{aligned}
S_2\left[f\right](x) -  S_2\left[g\right](x)
&= \exp(f(x)) - \exp(g(x)) - (f(x) - g(x)) \\
&=
- \exp(f(x))\left(\exp(g(x)-f(x))\right)-(g(x)-f(x))-1)   
\\
& +\left(\exp(f(x))-1\right) \left(f(x) - g(x)\right).
\end{aligned}
\label{continuity:theorem1:4}
\end{equation}
By taking absolute values in (\ref{continuity:theorem1:4}) and inserting
the inequalities
\begin{equation}
\left|\exp(x)-x-1\right| \leq \frac{1}{2}\left|x\right|^2 \exp(\left|x\right|)
\end{equation}
and
\begin{equation}
\left|\exp(x)-1\right| \leq \left|x\right| \exp(\left|x\right|),
\end{equation}
we see that
\begin{equation}
\begin{aligned}
\left|S_2\left[f\right](x) -  S_2\left[g\right](x)\right|
&\leq
\exp\left(\left|f(x)\right|\right)\left|f(x) - g(x)\right|
\left(
\frac{\left|f(x) - g(x)\right|}{2} \exp\left(\left|f(x)-g(x)\right|\right)
+ \left|f(x)\right|
\right)
\end{aligned}
\label{continuity:theorem1:5}
\end{equation}
for all $x\in\mathbb{R}$.
We combine (\ref{continuity:theorem1:3.6}) and (\ref{continuity:theorem1:5})
in order to obtain (\ref{continuity:theorem1:conclusion}).
\end{proof}

%
%

\vskip 1em
\begin{theorem}
Suppose that the hypotheses of Theorems~\ref{main_theorem}
are satisfied, that $\sigma$ and $\nu$ are the functions obtained by invoking it,
and that $\delta$ is the function defined via the formula
\begin{equation}
\delta(x) = T\left[\sigma\right](x).
\label{continuity:theorem2:delta}
\end{equation}
Suppose also that $x_0 < x_1$ are real numbers such that
\begin{equation}
x_1-x_0 \leq 1,
\end{equation}
that  $0 < \epsilon < 1$  is a real number, that
\begin{equation}
\lambda \geq \max\left\{\epsilon,
\frac{6\Gamma}{\mu},
\frac{1}{u}\log\left(\frac{32\Gamma}{\mu\epsilon}\right)\right\},
\label{continuity:theorem2:lambda}
\end{equation}
and  that $\eta:[x_0,x_1] \to \mathbb{C}$ is a continuous function 
such that
\begin{equation}
\left|\eta(x)\right| \leq \frac{\epsilon}{32}
\ \ \ \mbox{for all}\ \ x_0 \leq x \leq x_1.
\label{continuity:theorem2:eta}
\end{equation}
 Suppose further that  $\alpha$ and $\beta$ are real numbers such that
\begin{equation}
\left|\delta(x_0)-\alpha\right|
\leq  \frac{\epsilon}{64\lambda}
\label{continuity:theorem2:alpha}
\end{equation}
and 
\begin{equation}
\left|\delta'(x_0)-\beta\right|
\leq \frac{\epsilon}{64}.
\label{continuity:theorem2:beta}
\end{equation}
Then there exists a twice continuously differentiable function
$\delta_0:[x_0,x_1]\to \mathbb{R}$ which solves
the initial value problem
\begin{equation}
\left\{
\begin{aligned}
\delta_0''(x) + 4\lambda^2 \delta_0(x) &= S\left[\delta_0\right](x) + p(x) + \eta(x)
\ \ \ \mbox{for all}\ \ x_0 \leq x \leq x_1
\\
\delta_0(x_0) &= \alpha \\
\delta_0'(x_0) &= \beta \\
\end{aligned}
\right.
\label{continuity:theorem2:ivp}
\end{equation}
and such that
\begin{equation}
\left|\delta(x)-\delta_0(x)\right| 
\leq \frac{\epsilon}{4\lambda}
\end{equation}
and
\begin{equation}
\left|\delta'(x)-\delta'_0(x)\right| 
\leq \frac{\epsilon}{2}
\label{continuity:theorem2:conclusion2}
\end{equation}
for all $x_0 \leq x \leq x_1$.
\label{continuity:theorem2}
\end{theorem}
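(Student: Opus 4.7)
The plan is to construct $\delta_0$ via Picard-Lindel\"of on $[x_0, x_1]$ and then bound $w := \delta - \delta_0$ by a Gronwall-type bootstrap mirroring the structure of the proof of Theorem~\ref{bound:theorem4}. First I would recast the IVP (\ref{continuity:theorem2:ivp}) as a first-order system $u'(x) = F(x, u(x))$ with $u = (\delta_0, \delta_0')$. Since $S$ is built from an analytic series in $\delta_0$ and the square of $\delta_0'$, while $p$ and $\eta$ are continuous, $F$ is continuous in $x$ and locally Lipschitz in $u$; Theorem~\ref{preliminaries:picard:theorem1} supplies a local solution, with global existence on $[x_0, x_1]$ ultimately following from the a priori bounds produced below.

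Next I would set $w = \delta - \delta_0$ and subtract (\ref{continuity:theorem2:ivp}) from (\ref{nonoscillatory:deltax}) to obtain
\begin{equation}
w''(x) + 4\lambda^2 w(x) = \bigl(S[\delta](x) - S[\delta_0](x)\bigr) + \nu(x) - \eta(x),
\label{proofplan:weq}
\end{equation}
with initial data $w(x_0) = \delta(x_0) - \alpha$ and $w'(x_0) = \delta'(x_0) - \beta$. Applying Theorem~\ref{preliminaries:helmholtz:theorem2} with parameter $2\lambda$ expresses both $w$ and $w'$ as the initial data plus integrals from $x_0$ to $x$ of the forcing, against kernels bounded by $1/(2\lambda)$ and $1$ respectively. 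The forcing is controlled via Theorem~\ref{continuity:theorem1} combined with the a priori bounds (\ref{bound:theorem1:conclusion2})-(\ref{bound:theorem1:conclusion3}) on $|\delta|$ and $|\delta'|$; crucially $\lambda^2 |\delta|$ is $O(1)$, so the cross term $4\lambda^2 \exp(|\delta|)|\delta| \cdot |w|$ contributes an $O(1/\lambda)\cdot|w|$ term after integration against the $1/\lambda$ kernel, a size amenable to Gronwall over an interval of length at most $1$.

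To close the argument, I would set
\begin{equation}
\Omega = \left\{\, x \in [x_0, x_1] : |w(y)| \leq \tfrac{\epsilon}{4\lambda} \text{ and } |w'(y)| \leq \tfrac{\epsilon}{2} \text{ for all } x_0 \leq y \leq x \,\right\}.
\end{equation}
Hypotheses (\ref{continuity:theorem2:alpha})-(\ref{continuity:theorem2:beta}) place $x_0 \in \Omega$. On $\Omega$, the bootstrap bounds force the quadratic term $\lambda^2 |w|^2$ to be at most $\epsilon^2/16$, the lower bound (\ref{continuity:theorem2:lambda}) on $\lambda$ together with (\ref{main_theorem:nu}) yields $\|\nu\|_\infty \leq \epsilon/32$, and (\ref{continuity:theorem2:eta}) bounds $\eta$ similarly. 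Inserting these into the integral representation and invoking Theorem~\ref{preliminaries:gronwall:theorem1} should produce strict inequalities $|w(x)| < \epsilon/(4\lambda)$ and $|w'(x)| < \epsilon/2$ on $[x_0, \sup \Omega]$. Continuity of $w$ and $w'$ then forces $\sup \Omega = x_1$, simultaneously extending $\delta_0$ to all of $[x_0, x_1]$ and producing the required bounds.

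The main obstacle will be the bookkeeping of constants in the Gronwall step: the contributions from the initial error (at worst $\epsilon/32$), the forcings $\nu$ and $\eta$, the linear-in-$|w|$ terms, and the Gronwall amplification over a unit-length interval must all be arranged to remain comfortably below $\epsilon/(4\lambda)$ and $\epsilon/2$. The specific lower bound on $\lambda$ in (\ref{continuity:theorem2:lambda}) is tailored precisely so that $(\Gamma/\mu)\exp(-\mu\lambda)$ and the other error contributions are absorbed with sufficient room to close the bootstrap strictly.
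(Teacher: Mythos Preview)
Your plan is correct and matches the paper's proof closely: the paper likewise sets $\tau = \delta_0 - \delta$, derives the forced Helmholtz equation for $\tau$, bounds $S[\delta_0]-S[\delta]$ via Theorem~\ref{continuity:theorem1} together with the $\|\delta\|_\infty$, $\|\delta'\|_\infty$ estimates from Theorem~\ref{bound:theorem1}, and closes with exactly the bootstrap set $\Omega$ you describe. Two cosmetic differences: the paper absorbs the initial data by subtracting the homogeneous solution $\gamma(x)=\tau(x_0)\cos(2\lambda(x-x_0))+\tfrac{\tau'(x_0)}{2\lambda}\sin(2\lambda(x-x_0))$ rather than citing Theorem~\ref{preliminaries:helmholtz:theorem2} directly, and it never invokes Gronwall---once the bootstrap hypotheses on $\Omega$ are inserted, the resulting inequalities are purely numerical and yield the strict bounds by direct arithmetic, so Theorem~\ref{preliminaries:gronwall:theorem1} is unnecessary.
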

\begin{proof}
From (\ref{continuity:theorem2:delta}) and Theorem~\ref{main_theorem}
we conclude that
\begin{equation}
\delta''(x) + 4\lambda^2 \delta(x) = S\left[\delta\right](x) + p(x) + \nu(x),
\label{continuity:theorem2:00}
\end{equation}
where
\begin{equation}
\left\|\nu\right\|_\infty \leq \frac{\Gamma}{\mu} \exp\left(-\mu \lambda\right).
\label{continuity:theorem2:01}
\end{equation}
We combine (\ref{continuity:theorem2:01}) with our assumption that
\begin{equation}
\lambda \geq \frac{1}{\mu} \log\left(\frac{32\Gamma}{\epsilon \mu}\right),
\end{equation}
which is part of (\ref{continuity:theorem2:lambda}), 
in order to conclude that 
\begin{equation}
\left\|\nu\right\|_\infty \leq \frac{\epsilon}{32}.
\label{continuity:theorem2:10.5}
\end{equation}

We let $\Omega$ be the set of all $y \in [x_0, x_1]$
such that there exists a twice continuously differentiable
function $\delta_0:[x_0,y]\to\mathbb{R}$ 
with the following properties:
\begin{equation}
\delta_0''(x) + 4\lambda^2 \delta_0(x) = S\left[\delta_0\right](x)
+ p(x) + \nu(x)
\ \ \ \mbox{for all}\ \ x_0 \leq x \leq y,
\label{continuity:theorem2:1}
\end{equation}
\begin{equation}
 \left|\delta(y)-\delta_0(y)\right| \leq 
\frac{\epsilon}{4\lambda}
\ \  \ \mbox{for all}\ \ x_0 \leq y \leq x,
\label{continuity:theorem2:2}
\end{equation}
and
\begin{equation}
 \left|\delta'(y)-\delta_0'(y)\right| \leq 
\frac{\epsilon}{2}
\ \  \ \mbox{for all}\ \ x_0 \leq y \leq x.
\label{continuity:theorem2:3}
\end{equation}
From  (\ref{continuity:theorem2:alpha}), (\ref{continuity:theorem2:beta})
and
the Picard-Lindel\"of Theorem (Theorem~\ref{preliminaries:picard:theorem1}
in Section~\ref{section:preliminaries:picard}),
we conclude that $\Omega$ is nonempty.
We let  $x^*$ be the supremum of the set $\Omega$.  
If $x^*=x_1$,  then conclusions of the theorem hold. 
We will suppose that $x^* < x_1$ and derive a contradiction.

We define the function $\tau$  via the formula
\begin{equation}
\tau(x) = \delta_0(x) - \delta(x),
\label{continuity:theorem2:4}
\end{equation}
and $\gamma$ via the formula
\begin{equation}
\gamma(x) = \tau(x_0) \cos(2\lambda (x-x_0)) + 
\frac{\tau'(x_0)}{2\lambda} \sin(2\lambda(x-x_0))
\label{continuity:theorem2:4.5}
\end{equation}
so that 
\begin{equation}
\gamma''(x) + 4\lambda^2 \gamma(x) = 0,
\label{continuity:theorem2:4.55}
\end{equation}
\begin{equation}
\tau(x_0) - \gamma(x_0) =  0 
\label{continuity:theorem2:4.6}
\end{equation}
and
\begin{equation}
\tau'(x_0) - \gamma'(x_0) =  0 .
\label{continuity:theorem2:4.7}
\end{equation}
From   (\ref{continuity:theorem2:alpha}), (\ref{continuity:theorem2:beta})
and (\ref{continuity:theorem2:4})
we see that
\begin{equation}
\left| \tau(x_0) \right| \leq 
\frac{\epsilon}{64\lambda}
\label{continuity:theorem2:5}
\end{equation}
and
\begin{equation}
\left| \tau'(x_0) \right| \leq 
\frac{\epsilon}{64}.
\label{continuity:theorem2:6}
\end{equation}
By combining (\ref{continuity:theorem2:5}), (\ref{continuity:theorem2:6})
and (\ref{continuity:theorem2:4.5}), we see that
\begin{equation}
\left|\gamma(x)\right| \leq \frac{3\epsilon}{128\lambda}
\ \ \ \mbox{for all}\ \ x\in\mathbb{R},
\label{continuity:theorem2:6.5}
\end{equation}
and that 
\begin{equation}
\left|\gamma'(x)\right| \leq \frac{3\epsilon}{64}
\ \ \ \mbox{for all}\ \ x\in\mathbb{R}.
\label{continuity:theorem2:6.6}
\end{equation}
We combine 
(\ref{continuity:theorem2:00}),
(\ref{continuity:theorem2:1}) and (\ref{continuity:theorem2:4.55})
in order to conclude that 
\begin{equation}
\tau''(x)-\gamma''(x)  + 4\lambda^2 (\tau(x) - \gamma(x)) = 
S\left[\delta_0\right](x) -  S\left[\delta\right](x) + \eta(x) - \nu(x)
\label{continuity:theorem2:8}
\end{equation}
for all $x_0 \leq x \leq x^*$.  
By invoking Theorem~\ref{preliminaries:helmholtz:theorem1} of
Section~\ref{section:preliminaries:helmholtz}, we see that 
\begin{equation}
\begin{aligned}
\tau(x) - \gamma(x) &= \tau(x_0) - \gamma(x_0) + (\tau'(x_0)- \gamma'(x_0)) (x-x_0) \\
&+
\frac{1}{2\lambda} \int_{x_0}^x \sin\left(2\lambda(x-y)\right)\ 
\left(S\left[\delta_0\right](y) - S\left[\delta\right](y) + \eta(y) - \nu(y)\right)\ dy
\end{aligned}
\label{continuity:theorem2:8.5}
\end{equation}
for all $x_0 \leq x \leq x^*$.  We combine (\ref{continuity:theorem2:8.5})
with (\ref{continuity:theorem2:4.6}) and (\ref{continuity:theorem2:4.7})
in order to conclude that
\begin{equation}
\begin{aligned}
\tau(x) &= \gamma(x) + 
\frac{1}{2\lambda} \int_{x_0}^x \sin\left(2\lambda(x-y)\right)\ 
\left(S\left[\delta_0\right](y) - S\left[\delta\right](y) + \eta(y) - \nu(y)\right)\ dy
\end{aligned}
\label{continuity:theorem2:9}
\end{equation}
for all $x_0 \leq x \leq x^*$
We differentiate (\ref{continuity:theorem2:9})
in order to conclude that
\begin{equation}
\tau'(x) = \gamma'(x)+
\int_{x_0}^x \cos\left(2\lambda(x-y)\right)\ 
\left(S\left[\delta_0\right](y) - S\left[\delta\right](y) + \eta(y) - \nu(y)\right)\ dy
\label{continuity:theorem2:10}
\end{equation}
for all $x_0 \leq x \leq x^*$.  

Next, we observe that Theorem~\ref{continuity:theorem1} implies
that

\begin{equation}
\begin{aligned}
\left|S\left[\delta_0\right](x)  - S\left[\delta\right](x)\right|
&\leq
\frac{\left|\delta'(x)\right| |\tau'(x)|}{2}
+
\frac{\left|\tau'(x)\right|^2}{4}
+ 2\lambda^2 
\exp(|\delta(x)|)\exp(\left|\tau(x)\right|) |\tau(x)|^2
\\
&
+ 4\lambda^2 
\exp(\left|\delta(x)\right|) |\tau(x)| |\delta(x)|
\end{aligned}
\label{continuity:theorem2:11}
\end{equation}

for all $x \in \mathbb{R}$.
We combine the hypotheses (\ref{main_theorem:lambda}) 
of Theorem~\ref{main_theorem} with the conclusions (\ref{bound:theorem1:conclusion2})  and
(\ref{bound:theorem1:conclusion3})
of Theorem~\ref{bound:theorem1} 
and (\ref{continuity:theorem2:lambda})
in order to obtain
\begin{equation}
\left\|\delta\right\|_\infty \leq \frac{3\Gamma}{4\pi \mu\lambda^2}
\leq \frac{3}{64\pi},
\label{continuity:theorem2:12}
\end{equation}
\begin{equation}
\left\|\delta\right\|_\infty \leq \frac{3\Gamma}{4\pi \mu\lambda^2}
\leq \frac{1}{8\pi\lambda}
\label{continuity:theorem2:13}
\end{equation}
and
\begin{equation}
\left\|\delta'\right\|_\infty 
\leq 
\frac{3\Gamma }{4\pi \mu^2 \lambda^2}
\leq 
\frac{1}{32}.
\label{continuity:theorem2:14}
\end{equation}
By inserting (\ref{continuity:theorem2:12})
(\ref{continuity:theorem2:13}) and (\ref{continuity:theorem2:14}) into 
(\ref{continuity:theorem2:11}) we obtain the inequality
\begin{equation}
\begin{aligned}
\left|S\left[\delta_0\right](x)  - S\left[\delta\right](x)\right|
&\leq
\frac{1}{64} |\tau'(x)|
+
\frac{\left|\tau'(x)\right|^2}{4}
+ 2\lambda^2 
\exp\left(\frac{3}{64\pi}\right)\exp(\left|\tau(x)\right|) |\tau(x)|^2
\\
&
+
\frac{\lambda}{2}
\exp\left(\frac{3}{64\pi}\right)|\tau(x)|
\\
&\leq
\frac{1}{64} |\tau'(x)|
+
\frac{\left|\tau'(x)\right|^2}{4}+
\frac{51\lambda^2}{25}\exp(\left|\tau(x)\right|) |\tau(x)|^2
+
\frac{51\lambda}{100\pi}|\tau(x)|,
\end{aligned}
\label{continuity:theorem2:15}
\end{equation}
which holds for all $x \in \mathbb{R}$.  In the second inequality
of (\ref{continuity:theorem2:15}) we used (\ref{continuity:theorem2:12})
and the fact that 
\begin{equation}
\exp\left(\frac{3}{64\pi}\right)< \frac{51}{50}.
\label{continuity:theorem2:16}
\end{equation}

We insert
(\ref{continuity:theorem2:eta}),
(\ref{continuity:theorem2:10.5}),
(\ref{continuity:theorem2:6.5})
and
(\ref{continuity:theorem2:15})
into (\ref{continuity:theorem2:9})
in order to conclude that
\begin{equation}
\begin{aligned}
\left|\tau(x)\right|
\leq 
\frac{\epsilon}{16\lambda}
&+
\frac{1}{128\lambda} \sup_{x_0 \leq y \leq x} |\tau'(y)|
+
\frac{1}{8\lambda} \sup_{x_0 \leq y \leq x} \left|\tau'(y)\right|^2\\
&+
\frac{51\lambda}{50} \sup_{x_0 \leq y \leq x} \exp(\left|\tau(y)\right|) |\tau(y)|^2
+\frac{51}{200\pi}
 \sup_{x_0 \leq y \leq x} \left|\tau(y)\right|
\end{aligned}
\label{continuity:theorem2:17}
\end{equation}
for all $x_0 \leq x \leq x^*$.  By inserting (\ref{continuity:theorem2:2}) 
and  (\ref{continuity:theorem2:3})  into (\ref{continuity:theorem2:17}),
we see that
\begin{equation}
\begin{aligned}
\left|\tau(x)\right|
\leq 
\frac{\epsilon}{16\lambda}
&+
\frac{\epsilon}{256\lambda}
+
\frac{\epsilon^2}{32\lambda} 
+
\frac{51\epsilon^2}{800\lambda}
\exp\left(\frac{1}{4}\right)
+\frac{51\epsilon}{800\pi\lambda}
<
\frac{\epsilon}{4\lambda}
\end{aligned}
\label{continuity:theorem2:18}
\end{equation}
for all $x_0 \leq x \leq x^*$.
Similarly, we combine
(\ref{continuity:theorem2:eta}),
(\ref{continuity:theorem2:10.5}),
(\ref{continuity:theorem2:6.6}),
(\ref{continuity:theorem2:15})
and  (\ref{continuity:theorem2:10})
in order to conclude that 
\begin{equation}
\begin{aligned}
\left|\tau'(x)\right|
\leq 
\frac{\epsilon}{8}
&+
\frac{1}{64} \sup_{x_0 \leq y \leq x_0+\Delta} |\tau'(y)|
+
\frac{1}{4} \sup_{x_0 \leq y \leq x} \left|\tau'(y)\right|^2
\\
&+
\frac{51\lambda^2}{25} \sup_{x_0 \leq y \leq x} \exp(\left|\tau(y)\right|) |\tau(y)|^2
+\frac{51\lambda}{100\pi}
 \sup_{x_0 \leq y \leq x} \left|\tau(y)\right|
\end{aligned}
\label{continuity:theorem2:19}
\end{equation}
for all $x_0 \leq x \leq x^*$.  
By inserting (\ref{continuity:theorem2:2}) 
and  (\ref{continuity:theorem2:3})  into (\ref{continuity:theorem2:19})
we see that 
\begin{equation}
\left|\tau'(x)\right|
\leq 
\frac{\epsilon}{8}
+
\frac{\epsilon}{128}
+
\frac{\epsilon^2}{16}
+
\frac{51\epsilon^2}{400}
\exp\left(\frac{1}{4}\right)
+
\frac{51\epsilon}{400\pi}
< \frac{\epsilon}{2}
\label{continuity:theorem2:20}
\end{equation}
for all $x_0 \leq x \leq x^*$.
The Picard-Lindel\"of theorem together with (\ref{continuity:theorem2:17})
and (\ref{continuity:theorem2:20}) imply that there exists a $y > x^*$ such 
that (\ref{continuity:theorem2:2}) and (\ref{continuity:theorem2:3}) hold.
This is a contradiction since $x^* = \sup \Omega$.
\end{proof}

The following result is obtained by letting
\begin{equation}
\epsilon = 24 \Gamma \exp\left(-\lambda \mu\right)
\end{equation}
in Theorem~\ref{continuity:theorem2}.

%
%

\vskip 1em
\begin{theorem}
Suppose that the hypotheses of Theorems~\ref{main_theorem}
are satisfied, that $\sigma$ and $\nu$ are the functions obtained by invoking it,
and that $\delta$ is the function defined via the formula
\begin{equation}
\delta(x) = T\left[\sigma\right](x).
\label{continuity:theorem3:delta}
\end{equation}
Suppose further that $C_2$ is the real number
\begin{equation}
C_2 = 24\Gamma,
\label{continuity:theorem3:C2}
\end{equation}
 that $x_0 < x_1$ are real numbers such that
\begin{equation}
x_1-x_0 \leq 1,
\end{equation}
that 
\begin{equation}
\lambda \geq \max\left\{
\frac{6\Gamma}{\mu},
\frac{1}{\mu}W_0\left(\frac{C_2}{\mu}\right)
\right\}
\label{continuity:theorem3:lambda}
\end{equation}
and  that $\eta:[x_0,x_1] \to \mathbb{C}$ is a continuous function 
such that
\begin{equation}
\left|\eta(x)\right| \leq \frac{C_2}{32}\exp\left(-\lambda \mu\right)
\ \ \ \mbox{for all}\ \ x_0 \leq x \leq x_1.
\label{continuity:theorem3:eta}
\end{equation}
 Suppose further also  $\alpha$ and $\beta$ are real numbers such that
\begin{equation}
\left|\delta(x_0)-\alpha\right|
\leq  \frac{C_2}{64\lambda}\exp\left(-\lambda \mu\right)
\label{continuity:theorem3:alpha}
\end{equation}
and 
\begin{equation}
\left|\delta'(x_0)-\beta\right|
\leq \frac{C_2}{64}\exp\left(-\lambda \mu\right).
\label{continuity:theorem3:beta}
\end{equation}
Then there exists a twice continuously differentiable function
$\delta_0:[x_0,x_1]\to \mathbb{R}$ which solves
the initial value problem
\begin{equation}
\left\{
\begin{aligned}
\delta_0''(x) + 4\lambda^2 \delta_0(x) &= S\left[\delta_0\right](x) + p(x) + \eta(x)
\ \ \ \mbox{for all}\ \ x_0 \leq x \leq x_1
\\
\delta_0(x_0) &= \alpha \\
\delta_0'(x_0) &= \beta, \\
\end{aligned}
\right.
\label{continuity:theorem3:ivp}
\end{equation}
and such that
\begin{equation}
\left|\delta(x)-\delta_0(x)\right| 
\leq \frac{C_2}{4\lambda}\exp\left(-\lambda \mu\right)
\end{equation}
and
\begin{equation}
\left|\delta'(x)-\delta'_0(x)\right| 
\leq \frac{C_2}{2}\exp\left(-\lambda \mu\right).
\label{continuity:theorem3:conclusion2}
\end{equation}
for all $x_0 \leq x \leq x_1$.
\label{continuity:theorem3}
\end{theorem}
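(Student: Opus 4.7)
The plan is to deduce this theorem directly from Theorem~\ref{continuity:theorem2} by specializing the free parameter $\epsilon$ to the value $C_2 \exp(-\lambda\mu) = 24\Gamma\exp(-\lambda\mu)$, as indicated by the remark preceding the statement. With this identification, each of the hypotheses (\ref{continuity:theorem3:eta}), (\ref{continuity:theorem3:alpha}), (\ref{continuity:theorem3:beta}) on the perturbation $\eta$ and the initial data $\alpha,\beta$ matches, term for term, the corresponding hypotheses (\ref{continuity:theorem2:eta}), (\ref{continuity:theorem2:alpha}), (\ref{continuity:theorem2:beta}) of Theorem~\ref{continuity:theorem2}, and the two conclusions likewise transcribe into (\ref{continuity:theorem3:conclusion2}) and the adjacent bound on $|\delta-\delta_0|$ after the substitution.

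The principal task is therefore to verify that (\ref{continuity:theorem3:lambda}) implies the composite parameter restriction (\ref{continuity:theorem2:lambda}), namely $\lambda \geq \max\{\epsilon,\ 6\Gamma/\mu,\ \frac{1}{\mu}\log(32\Gamma/(\mu\epsilon))\}$ with $\epsilon = C_2 \exp(-\lambda\mu)$. The condition $\lambda \geq 6\Gamma/\mu$ is handed to us directly. For $\lambda \geq \epsilon$, I would multiply through by $\mu \exp(\lambda\mu)$ to obtain the equivalent inequality $\lambda\mu \exp(\lambda\mu) \geq C_2\mu$, and then apply Theorem~\ref{preliminaries:LambertW:theorem1} to recast it as a lower bound on $\lambda\mu$ in terms of $W_0$, which is exactly the Lambert $W$ clause of (\ref{continuity:theorem3:lambda}). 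For the logarithmic condition, substituting $\epsilon = C_2 \exp(-\lambda\mu)$ collapses $\frac{1}{\mu}\log(32\Gamma/(\mu\epsilon))$ to an expression linear in $\lambda$ plus a $\mu$-dependent constant, and the required inequality then reduces to the same Lambert $W$ bound combined with the $6\Gamma/\mu$ floor; a short elementary manipulation of logarithms finishes the check. The condition $0<\epsilon<1$ is likewise secured once $\lambda\mu$ exceeds $\log C_2$, which is implied by the $W_0$ bound.

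Once (\ref{continuity:theorem2:lambda}) has been established, Theorem~\ref{continuity:theorem2} delivers the required $\delta_0$ solving (\ref{continuity:theorem3:ivp}) and obeying $|\delta(x)-\delta_0(x)| \leq \epsilon/(4\lambda)$ and $|\delta'(x)-\delta_0'(x)| \leq \epsilon/2$; resubstituting $\epsilon = C_2\exp(-\lambda\mu)$ produces precisely the conclusions of the present theorem. The only real obstacle is bookkeeping: confirming that the single compact hypothesis (\ref{continuity:theorem3:lambda}), expressed through $W_0$, simultaneously dominates all three components of (\ref{continuity:theorem2:lambda}) after the explicit substitution for $\epsilon$. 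This is tedious but mechanical, relying only on the defining property of the Lambert $W_0$ function supplied by Theorem~\ref{preliminaries:LambertW:theorem1}.
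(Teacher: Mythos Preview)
Your proposal is correct and takes essentially the same approach as the paper: the paper's entire ``proof'' is the one-line remark preceding the statement, namely that Theorem~\ref{continuity:theorem3} is obtained from Theorem~\ref{continuity:theorem2} by setting $\epsilon = 24\Gamma\exp(-\lambda\mu)$. Your plan to verify the hypotheses of Theorem~\ref{continuity:theorem2} under this substitution, using the defining property of $W_0$ from Theorem~\ref{preliminaries:LambertW:theorem1}, is exactly the bookkeeping the paper leaves implicit.
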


\vskip 1em
\begin{remark}
In (\ref{continuity:theorem3:lambda}), $W_0$ refers to the branch of the Lambert $W$
function which is greater than or equal to $-1$ on the interval $[-1/e,\infty)$;
see Section~\ref{section:preliminaries:LambertW}.
\end{remark}

\label{section:continuity}
\end{subsection}

\label{section:apparatus}

\end{section}


\begin{section}{Numerical algorithm}

In this section, we describe an algorithm for the solution of
the boundary value problem
\begin{equation}
\left\{
\begin{aligned}
y''(t) + \lambda^2 q(t) y(t) &= 0 \ \ \mbox{for all}\ \ a \leq t \leq b \\
c_1 y(a) + c_2 y'(a) &= \alpha \\
c_3 y(b) + c_4 y'(b) &= \beta.
\end{aligned}
\right.
\label{algorithm:bvp}
\end{equation}
where  $c_1$, $c_2$, $c_3$, $c_4$, 
$\alpha$, $\beta$ and $\lambda > 0$ are real numbers, and 
$q$ is a strictly positive on the interval $[a,b]$ 
and analytic in an open set containing the interval
$[a,b]$.
It can be easily modified to address, {\it inter alia}, initial value problems.

The algorithm exploits the analytical appparatus 
developed in Section~\ref{section:apparatus}
in order to construct a solution $r_2$ of the logarithm
form of Kummer's equation
\begin{equation}
r_2''(t) - \frac{1}{4} (r_2'(t))^2 + 4\lambda^2 \left(\exp(r_2(t)) - q(t)\right) = 0
\ \ \ \mbox{for all}\ \ \ a \leq t \leq b.
\label{algorithm:kummer_log}
\end{equation}
Once the function $r_2$ has been obtained, we construct a phase function $\alpha$ via the
formula
\begin{equation}
\alpha(t) = \gamma \int_0^t \exp\left(\frac{r_2(u)}{2}\right)\ du.
\label{algorithm:phase_formula}
\end{equation}
It has the property that the functions $u$, $v$ defined by the formulas
\begin{equation}
u(t) = \frac{\cos(\alpha(t)) }{\left|\alpha'(t)\right|^{1/2}}
\label{algorithm:u}
\end{equation}
and
\begin{equation}
v(t) = \frac{\sin(\alpha(t)) }{\left|\alpha'(t)\right|^{1/2}}
\label{algorithm:v}
\end{equation}
form a basis in the space of solutions of the ordinary 
differential equation
\begin{equation}
y''(t) + \lambda^2 q(t) y(t) = 0 \ \ \mbox{for all}\ \ a \leq t \leq b.
\end{equation}
Real numbers $d_1$ and $d_2$ such that the function
\begin{equation}
y_0(t) = d_1 u(t) + d_2 v(t)
\label{algorithm:solution}
\end{equation}
satisfies the boundary conditions
\begin{equation}
\begin{aligned}
c_1 y(a) + c_2 y'(a) &= \alpha \\
c_3 y(b) + c_4 y'(b) &= \beta
\end{aligned}
\label{algorithm:conditions}
\end{equation}
are calculated in the obvious fashion: by inserting
(\ref{algorithm:solution}) into (\ref{algorithm:conditions}),
evaluating the functions $u$ and $v$ at the points
$a$ and $b$ via formulas (\ref{algorithm:u}) and (\ref{algorithm:v}), 
and solving the resulting
system of two linear algebraic equations in the two unknowns $d_1$, $d_2$.

In addition to the value of $\lambda$ and a routine for evaluating the function $q$
at any point on the interval $[a,b]$, the  user supplies as inputs to the algorithm
an integer $m>0$ and a partition
\begin{equation}
a = \xi_0 < \xi_1 < \xi_2 < \cdots < \xi_n = b
\end{equation}
of the interval $[a,b]$.  For each $j=1,\ldots,n$, the restrictions of the functions $r$ and 
$\alpha$ to $\left[\xi_{j-1},\xi_{j}\right]$ 
are represented by their values at the points
\begin{equation}
x_{j,0},x_{j,1}, x_{j,2}, \ldots, x_{j,m}, x_{j,m}
\label{algorithm:chebyshev}
\end{equation}
of the $(m+1)$-point Chebyshev grid on the interval
$\left[\xi_{j-1},\xi_{j}\right]$ (see Section~\ref{section:preliminaries:chebyshev}).
The  assumption is, of course, that the restrictions of 
 these functions to each subinterval 
are  well-approximated by polynomials of degree $m$.
Note that for each $j=1,\ldots,n-1$,
the last Chebyshev point in the interval $\left[\xi_{j-1},\xi_{j}\right]$
coincides with the first Chebyshev point in the 
interval $\left[\xi_{j},\xi_{j+1}\right]$; that is,
\begin{equation}
x_{j,m} = \xi_{j+1} = x_{j+1,0}
\end{equation}
for all $j=1,\ldots,n-1$.  

The output of the algorithm consists of the values
of $\alpha$ and $\alpha'$ at each of the $n \cdot (m+1)$ points
\begin{equation}
x_{1,0},\ldots,x_{1,m}, 
x_{2,0},\ldots,x_{2,m},
\ldots
x_{n,0},\ldots,x_{n,m}.
\label{algorithm:discretization}
\end{equation}
Using this data, the value of the solution $y_0$ of the boundary value
problem (\ref{algorithm:bvp}) 
can be computed at any point
$t$ in $[a,b]$.  More specifically, to evaluate  $y_0(t)$
at the point $t$, 
we  calculate
 $\alpha(t)$ and $\alpha'(t)$ via Chebyshev interpolation  
(as discussed in Section~\ref{section:preliminaries:chebyshev}),
then evaluate $u(t)$ and $v(t)$ using formulas
(\ref{algorithm:u}) and (\ref{algorithm:v}),
and finally 
  insert the values of $u(t)$ and $v(t)$ into 
(\ref{algorithm:solution}) in order to obtain $y_0(t)$.

Our algorithm calls for solving a number of stiff ordinary differential equations.
In our implementation, we used
the spectral deferred correction method described in
\cite{Dutt-Greengard-Rokhlin}.
It was chosen for its excellent stability properties;
however, any standard approach to the numerical solution of 
stiff ordinary differential equation can be substituted for
the algorithm of \cite{Dutt-Greengard-Rokhlin}.

We now describe the procedure for the construction of the phase
function $\alpha$ in detail.  It consists of the following
four phases.

\noindent 
{\it Phase 1: Construction of the Windowed Problem}

In the first phase of the algorithm we construct a windowed
version $\tilde{q}$ of the function $q$ using the following
sequence of steps:

\begin{enumerate}
\item 
We let 
%
\begin{equation}
\psi(t) = 
\frac{1-\erf\left(\frac{13}{b-a} \left(t-\frac{a+b}{2}\right)\right)}{2}
\label{algorithm:erf}
\end{equation}
so that $\psi(t) \approx 1$ for all $t$ near $a$
and $\psi(t) \approx 0$ for all $t$ near $b$.  Note that
the constant $13$ in (\ref{algorithm:erf}) was chosen to be the 
smallest positive integer such that 
the quantities $\left|1-\phi(a) \right|$
and $\left|\phi(b) \right|$ are less than machine precision.

\item
We define the function $\tilde{q}$ by the  formula
\begin{equation}
\tilde{q}(t) = \psi(t) + (1-\psi(t)) q(t)
\label{algorithm:windowedq}
\end{equation}
so that  $\tilde{q}(t) \approx 1$ when $t$ is close to $a$ and
$\tilde{q}(t) \approx q(t)$ when $t$ is close to $b$.
We refer to $\tilde{q}$ as the windowed version of $q$.  
\end{enumerate}

\vskip 1em\noindent
{\it Phase 2: Solution of the windowed problem }

In this phase, we solve the initial value problem 
\begin{equation}
\left\{
\begin{aligned}
r_1''(t) - \frac{1}{4}\left(r_1'(t)\right)^2 + 
4 \lambda^2\left( \exp(r_1(t)) - \tilde{q}(t)\right) &= 0
\ \ \ \mbox{for all}\ \ a \leq t \leq b\\
r_1(a) = r_1'(a) &= 0,
\end{aligned}
\right.
\label{algorithm:kummer_log_form}
\end{equation}
with the windowed function $\tilde{q}$ is in place of the original function $q$.
We denote by $\tilde{r}$ the nonoscillatory 
solution of the logarithm form of Kummer's equation obtained by applying
Theorem~\ref{main_theorem} to the second order ordinary differential equation
\begin{equation}
y''(t) + \lambda^2 \tilde{q}(t) y(t) = 0.
\end{equation}
By invoking Theorems~\ref{bound:theorem5} and \ref{continuity:theorem3}, 
we see that 
\begin{equation}
\left|r_1(t) - \tilde{r}(t) \right| 
+
\left|r_1'(t) - \tilde{r}'(t) \right| 
= \mathcal{O} \left(\exp\left(-\frac{\lambda \mu}{2}\right)\right)
\label{algorithm:r1}
\end{equation}
for all $t$ close to $b$.
Assuming that $\lambda$ is sufficiently large, 
the difference between $r_1$ and the  nonoscillatory function $\tilde{r}$
is well below  machine precision and $r_1$ can be 
treated as nonoscillatory for the purposes of numerical computation.

For each $j=1,\ldots,n$, we compute the solution of 
 (\ref{algorithm:kummer_log_form}) at the points
\begin{equation}
x_{j,0}, \ldots, x_{j,m}
\end{equation}
of the $(m+1)$-point Chebyshev grid on  $\left[\xi_{j-1},\xi_{j}\right]$.
If $j=1,$ then the initial conditions are taken to be
\begin{equation}
\begin{aligned}
r_1(a)   = r_1'(a)= 0.
 \end{aligned}
\end{equation}
If, on the other hand, $j>1$, then we enforce the conditions
\begin{equation}
r_1\left(\xi_{j,1}\right) = r\left(\xi_{j-1,m}\right) 
\end{equation}
and
\begin{equation}
r_1'\left(\xi_{j,1}\right) = r_1'\left(\xi_{j-1,m}\right);
\end{equation}
that is,  we require that 
$r_1$ and its first derivative
 agree at the left endpoint of the  interval
 with the value and derivative of the solution
at the right endpoint of the previous interval.

\vskip 1em \noindent
{\it Phase 3: Solution of the original problem}

In this phase, we solve the  problem
\begin{equation}
\left\{
\begin{aligned}
r_2''(t) - \frac{1}{4}\left(r_2'(t)\right)^2 + 4 \lambda^2\left( \exp(r_2(t)) - q(t)\right) &= 0
\ \ \ \mbox{for all}\ \ a \leq t \leq b \\
r_2(b) = r_1(b)\\
r_2'(b) = r_1'(b)\\
\end{aligned}
\right.
\label{algorithm:kummer_log_form2}
\end{equation}
The intervals are processed in decreasing order:
 the $n^{th}$ interval
$\left[\xi_{n-1},\xi_n\right]$ is the first to be processed, then
$\left[\xi_{n-2},\xi_{n-1}\right]$, and so on.
Boundary conditions are imposed at the left end point of each interval;
in particular,
when processing the $n^{th}$ interval we require that 
\begin{equation}
\begin{aligned}
r_2\left(\xi_n\right)  &= r_1\left(\xi_n\right)\\
r_2'\left(\xi_n\right) &= r_1\left(\xi_n\right)
\end{aligned}
\end{equation}
and while processing each of the subsequent intervals
$\left[\xi_{j-1},\xi_{j}\right]$ we require that
\begin{equation}
\begin{aligned}
r_2\left(x_{j,m}\right) &= r_2\left(x_{j+1,0}\right)\\
r_2'\left(x_{j,m}\right) &= r_2'\left(x_{j+1,0}\right).\\
\end{aligned}
\end{equation}
We combine Theorems~\ref{bound:theorem5} and \ref{continuity:theorem3}
with (\ref{algorithm:r1})
in order to conclude that 
\begin{equation}
\left|r(t) - r2(t) \right| = \mathcal{O} \left( \exp\left(-\frac{\mu\lambda}{2}\right)\right)
\label{algorithm:r2}
\end{equation}
for all $a \leq t \leq b$.
As in the case of $r_1$, (\ref{algorithm:r2}) implies that in the high-frequency regime, the 
difference between 
$r_2$ and the nonoscillatory solution $r$ of the logarithm form of Kummer's
equation associated with the coefficient $q$  is much smaller than
machine precision.  Consequently, we regard $r_2$ as nonoscillatory
for the purposes of numerical computation.

\noindent 
{\it Phase 4: Preparation of the output}

In this final phase, the values of the functions $\alpha$ and $\alpha'$ 
are tabulated at each of the points (\ref{algorithm:discretization})
via the following sequence of steps:

\begin{enumerate}

\item
We compute the values of $\alpha'$ at the 
points (\ref{algorithm:discretization}) using the 
formula  
\begin{equation}
\alpha'(t)  = \lambda \exp\left(\frac{r_2'(t)}{2}\right).
\end{equation}

\item
For each $j=1,\ldots,n$, we apply the spectral integration
matrix of order $m$ (see Section~\ref{section:preliminaries:chebyshev})
to the vector
\begin{equation}
\left(
\begin{array}{c}
\alpha'\left(x_{j,0}\right)\\
\alpha'\left(x_{j,1}\right)\\
\vdots \\
\alpha'\left(x_{j,m}\right)
\end{array}
\right)
\end{equation}
in order to obtain the values 
\begin{equation}
\alpha_j\left(x_{j,0}\right), \alpha_j\left(x_{j,1}\right), \ldots,
\alpha_j\left(x_{j,m}\right)
\end{equation}
of an antiderivative $\alpha_j$ of the restriction of $\alpha'$  
to the interval $\left[\xi_{j-1},\xi_j\right]$
at the nodes of the $(m+1)$-point Chebyshev grid
on that interval.
Note that the value of  $\alpha_j(\xi_j)$ is not necessarily
consistent with the value of $\alpha_{j+1}(\xi_j)$.  This problem
is  corrected in the following steps.

\item
For each $j=1,\ldots,n$,  we define a real number $\gamma_j$ as follows
\begin{equation}
\begin{cases}
\gamma_j = \alpha \left(\xi_{1,0}\right) & \ \ \mbox{if}\ \ j = 1 \\
\gamma_j = \alpha \left(\xi_{j-1,m}\right) & \ \ \mbox{if}\ \  j >1 \\
\end{cases}
\end{equation}
\item
For each $j=2,\ldots,n$ and each $i=0,\ldots,m$, the value
of the phase function $\alpha$ at the point $x_{j_,i}$ is computed
via the formula
\begin{equation}
\alpha\left(x_{j,i}\right) = \alpha_j\left(x_{j,i}\right) - \alpha_j\left(x_{j,0}\right) + \gamma_j.
\end{equation}

\end{enumerate}

The output of the algorithm consists of the values of $\alpha'$
at the nodes (\ref{algorithm:chebyshev})
computed in Step 1 of Phase 4 and the values of $\alpha$ 
at the nodes (\ref{algorithm:chebyshev}) computed in Step 4 of 
Phase 4.  

\label{section:algorithm}
\end{section}

\begin{section}{Numerical experiments}

In this section, we describe numerical experiments 
performed to evaluate
the performance of the algorithm of Section~\ref{section:algorithm}.
Our code  was written in Fortran and
 compiled with the Intel Fortran Compiler version 13.1.3.
All calculations were carried out 
on a desktop computer equipped with an Intel Xeon X5690 CPU 
running at 3.47 GHz.    
Unless otherwise noted, double precision (Fortran REAL*8)
arithmetic was used.

%
%
\begin{subsection}{Comparison with a standard solver}
We measured the performance of the algorithm of this paper
by applying it to the initial value problem
\begin{equation}
\left\{
\begin{aligned}
y''(t) + \lambda^2 q(t) y(t) &= 0 
\ \ \ \mbox{for all}\ \ -1 \leq t \leq 1 \\
y(-1) &= 0\\
y'(-1) &= \lambda,
\end{aligned}
\right.
\label{experiments:simple:ode}
\end{equation}
where $q$ is defined by the formula
\begin{equation}
q(t) = 1 - t^2\cos(3t),
\label{experiments:simple:q}
\end{equation}
for seven values of $\lambda$.
A reference solution was obtained by executing
the spectral deferred correction method of \cite{Dutt-Greengard-Rokhlin}
in extended precision (Fortran REAL*16) arithmetic.
The interval $[-1,1]$ was partitioned into $10$ equispaced
subintervals and the $16$ point Chebyshev grid was used to represent
the nonoscillatory phase function on each subinterval.
For each value of $\lambda$, the obtained solution was compared
to the reference solution at $1000$ randomly chosen  points on the interval $[-1,1]$.

The results of this experiment are reported in 
Table~\ref{numerics:simple:table}.  Each row there corresponds to one value
of $\lambda$ and reports the time required to construct the nonoscillatory
phase function, the average time required to evaluate the solution of 
(\ref{experiments:simple:ode}) using this nonoscillatory phase function, 
and the maximum absolute error 
which was observed.
We see that the time required to solve (\ref{experiments:simple:ode})
was independent of the value of the parameter $\lambda$,
and that the obtained accuracy decreased as $\lambda$ increased.
This loss of precision was incurred when the sine and cosine of large arguments
were calculated in the course of 
evaluating the functions $u$, $v$ defined via
 formulas (\ref{introduction:u}), (\ref{introduction:v}).

Plots of the function $q$ defined by (\ref{experiments:simple:q})
and the windowed version of $q$ constructed as an intermediate
step by the algorithm of Section~\ref{section:algorithm}
are shown in Figure~\ref{figure:simple:q}.  Plots
of the solution $r$ of the logarithm form of Kummer's equation
when $\lambda=10^7$
and the windowed version $r_1$ of $r$ constructed as an intermediate
step by the algorithm of Section~\ref{section:algorithm}
are shown in Figure~\ref{figure:simple:r}.

\label{section:experiments:simple}
\end{subsection}

%
%

\begin{subsection}{Phase functions for Chebyshev's equation}

Chebyshev's equation
\begin{equation}
(1-t^2)y''(t) - t y'(t) + \lambda^2 y(t) = 0
\ \ \ \mbox{for all} \ \ -1 \leq  t \leq 1
\label{experiments:chebyshev:1}
\end{equation}
admits an exact nonoscillatory phase function which can be represented
via elementary functions.  More specifically,
\begin{equation}
\alpha_0(t) = \lambda \arccos(t)
\label{experiments:chebyshev:2}
\end{equation}
is a nonoscillatory phase function for the second order equation
\begin{equation}
\psi''(t) + 
\left(
\frac{2 + t^2 + 4 \lambda^2 \left(1-t^2\right)}{4\left(1-t^2\right)^2}
\right)
\psi(t) = 0
\ \ \ \mbox{for all}\ \ \ -1 \leq t \leq 1
\label{experiments:chebyshev:3}
\end{equation}
obtained by introducing
\begin{equation}
\psi(t) = (1-t^2)^{1/4} y(t)
\end{equation}
into  (\ref{experiments:chebyshev:1}).
For each $\lambda = 10, 20, \ldots, 1000$,
we applied  the algorithm of Section~\ref{section:algorithm}
to (\ref{experiments:chebyshev:3}) 
and compared the resulting phase function to (\ref{experiments:chebyshev:2}).
Figure~\ref{numerics:chebyshev:figure1} displays a plot of the
relative difference 
\begin{equation}
\frac{\left\|\alpha - \alpha_0\right\|_\infty}{\left\|\alpha_0\right\|_\infty}
\end{equation}
 between the exact phase function $\alpha_0$ and the phase function $\alpha$
obtained via the algorithm of Section~\ref{section:algorithm} as a function
of $\lambda$.
We observe that as $\lambda$ increases,
the difference between the phase function obtained via the algorithm
and the function $\lambda \arccos(t)$ 
 decays at an exponential
rate.

\label{section:experiment:chebyshev}
\end{subsection}

%
%
\begin{subsection}{Evaluation of Bessel functions.}

We compared the cost of evaluating Bessel functions
of integer order via the standard recurrence relation with that
of doing so using a nonoscillatory phase function.

We  denote by $J_\nu$ the Bessel function of the first kind
of order $\nu$.  It is a solution of the second order
differential equation
\begin{equation}
t^2 y''(t) + t y'(t)  + (t^2-\nu^2) y(t) = 0,
\label{experiments:bessel:1}
\end{equation}
which is brought into the standard form
\begin{equation}
\psi''(t) + \left(1-\frac{\lambda^2-1/4}{t^2} \right)\psi(t) = 0
\label{experiments:bessel:2}
\end{equation}
via the transformation
\begin{equation}
\psi(t) = \sqrt{t}\ y(t).
\label{experiments:bessel:3}
\end{equation}
An inspection of (\ref{experiments:bessel:2})  reveals that
$J_\nu$ is nonoscillatory on the interval
\begin{equation}
\left(0,\frac{1}{2}\sqrt{4\nu^2-1}\right)
\end{equation}
and oscillatory on the interval
\begin{equation}
\left(\frac{1}{2}\sqrt{4\nu^2-1},\infty\right).
\label{experiments:bessel:4}
\end{equation}
In addition to being a solution of a second order differential equation,
the Bessel function of the first kind
of order $\nu$ satisfies the three-term recurrence relation
\begin{equation}
J_{\nu+1}(t) = \frac{2\nu}{t} J_\nu(t) - J_{\nu-1}(t).
\label{experiments:bessel:5}
\end{equation}
The recurrence (\ref{experiments:bessel:5}) is numerically unstable in the forward
direction; however, when evaluated in the direction of decreasing
index, it yields a stable mechanism for evaluating Bessel functions
of integer order (see, for instance, Chapter 3 of \cite{NISTHandbook}).
These and many other properties of Bessel functions
are discussed in \cite{Watson}.

For each of $8$ values of $n$, we obtained an approximation
of the Bessel function  $J_n$  via the algorithm
of Section~\ref{section:algorithm} and compared its values
to those obtained through the recurrence relation 
at a collection of $1000$ randomly chosen points 
in the interval $[\frac{1}{2}\sqrt{4n^2-1},10n]$.  
The results of this experiment are shown in Table~\ref{experiments:bessel:table1}.
The phase function produced by the algorithm
of Section~\ref{section:algorithm} when  $n=10^4$
is shown in  Figure~\ref{experiments:bessel:figure1}.

\label{section:experiments:bessel}
\end{subsection}

\begin{subsection}{Evaluation of Legendre functions}

We used the algorithm of this paper to evaluate Legendre functions
of the first kind of various orders on the interval $[-1,1]$.

For each real number $\nu$,  we denote by $P_\nu$ the Legendre function
of the first kind of order $\nu$.    It is the solution of Legendre's equation
\begin{equation}
(1-t^2) y''(t) - 2t y'(t) + \nu(\nu+1) y(t) = 0
\ \ \ \mbox{for all} \ \ -1 \leq t \leq 1
\label{numerics:legendre:4}
\end{equation}
which is regular at the origin.  Letting
\begin{equation}
\psi(t) = \sqrt{1-t^2}\ y(t)
\label{numerics:legendre:6}
\end{equation}
in  Equation (\ref{numerics:legendre:4}) yields
\begin{equation}
\psi''(t) + 
\left(
\frac{1}{(1-t^2)^2} + \frac{\nu}{1-t^2} + \frac{\nu^2}{(1-t^2)^2}
\right)
\psi(t) 
= 0
\ \ \ \mbox{for all} \ \ -1 \leq t \leq 1,
\label{numerics:legendre:5}
\end{equation}
which is in a suitable form for the algorithm of Section~\ref{section:algorithm}.

We observe that the coefficient in equation (\ref{numerics:legendre:5}) is singular
at $\pm 1$,   which means that
phase functions for Legendre's equation are singular at $\pm 1$  as well. 
Accordingly,  in this experiment we used as input to the algorithm 
of Section~\ref{section:algorithm} a partition of the form
\begin{equation}
-\xi_{-k} < -\xi_{-k+1} < \ldots < \xi_{-1} < \xi_0 < \xi_1 < \ldots \xi_{k-1} <  \xi_{k},
\label{numerics:legendre:7}
\end{equation}
where $k=50$ and  $\xi_j$ is defined by the formula
\begin{equation}
\xi_j = 1 -  2^{-|j|}.
\label{numerics:legendre:8}
\end{equation}
The set $\{\xi_j\}$ is a ``graded mesh'' whose points
cluster near the singularities $\pm 1$ of the coefficient
in (\ref{numerics:legendre:5}).  Note that (\ref{numerics:legendre:7})
is not a partition of the entire interval $[-1,1]$  but rather a partition
of $[-b,b]$, where
\begin{equation}
b = 1-2^{-50}.
\end{equation}
%
Functions were represented using $16^{th}$ order Chebyshev expansions on each of the
intervals $[\xi_{j},\xi_{j+1}]$.

For each of $11$ values of $\nu$, the algorithm of this paper was applied
to Equation (\ref{numerics:legendre:5}) and the solution evaluated
at a collection of $1000$ randomly chosen points on the interval $[-1,1]$.
In order to assess the 
the error in each obtained solution, we constructed
a reference solutions by performing the calculations a second time using
extended precision (Fortran REAL*16) arithmetic.
The results are reported in Table~\ref{numerics:legendre:table1}.  Each row
corresponds to one value of $\nu$ and reports the time required to construct
the nonoscillatory phase functions, the average time required to 
evaluate the Legendre function of the first kind of order $\nu$ 
using this nonoscillatory phase function, and the maximum observed absolute error.
Figure~\ref{numerics:legendre:figure1} depicts the solution of the logarithm
form of Kummer's equation obtained by the algorithm of this paper when $\nu = \pi \cdot 10^5$.

\label{section:experiments:legendre}
\end{subsection}

\begin{subsection}{Evaluation of prolate spheriodal wave functions}

We used the  algorithm of Section~\ref{section:algorithm}
to evaluate prolate spheriodal wave functions of order 0 and we compared its
performance   with  that of the Osipov-Rokhlin algorithm
\cite{Osipov-Rokhlin}.

Suppose that $c > 0$ is a real number.  Then there exists a sequence 
\begin{equation}
0 < \chi_{c,0} < \chi_{c,1} < \chi_{c,2} < \cdots
\label{experiments:prolates:1}
\end{equation}
of positive real numbers such for each nonnegative integer $n$,
the second order differential equation
\begin{equation}
(1-t^2) \psi''(t) - 2 t \psi'(t) + (\chi_{c,n} - c^2 t^2 )\psi(t) = 0
\label{experiments:prolates:2}
\end{equation}
has a continuous solution on the interval $[-1,1]$.
These solutions are the prolate spheriodial wave functions of 
order $0$ associated with the parameter $c$.  We denote them
by 
\begin{equation}
\psi_{c,0}(t), \psi_{c,1}(t), \psi_{c,2}(t), \ldots.
\label{experiments:prolates:3}
\end{equation}
The monograph \cite{Osipov-Rokhlin-Xiao}
contains a detailed discussion of the prolate spheriodal wave functions
of order $0$.

By introducing  the function
\begin{equation}
\varphi(t) = \psi(t) \sqrt{1-t^2}
\label{experiments:prolates:4}
\end{equation}
into (\ref{experiments:prolates:2}), we bring it into the form
\begin{equation}
\varphi''(t) + 
\left(
\frac{1}{(1-t^2)^2} + \frac{\chi_{c,n}}{1-t^2} - c^2t^2
\right) 
\varphi(t) = 0.
\label{experiments:prolates:5}
\end{equation}
An inspection of (\ref{experiments:prolates:5})  reveals that
 the coefficient in (\ref{experiments:prolates:5})
is nonnegative on the interval $[-1,1]$
when $\chi_n \geq c^2$.


For several values of $c$ and $\chi_{n,c} > c^2$, we evaluated the prolate
spheriodial wave function $\psi_{c,n}$ at a collection of
$100$  randomly chosen points 
in the interval $[-1,1]$
by applying the algorithm of Section~\ref{section:algorithm} to (\ref{experiments:prolates:5})
and via the  Osipov-Rokhlin algorithm.      
Table~\ref{numerics:prolate:table} presents the results
and Figure~\ref{numerics:prolate:figure1} shows a 
plot of $\alpha(t) - ct$, where 
$c= 10^5$, $n = 63769$, $\chi_{c,n} = 1.00060408908491\e{10}$
and $\alpha$ is the nonoscillatory
phase function for Equation~(\ref{experiments:prolates:5})
produced by the algorithm of Section~\ref{section:algorithm}.



\label{section:experiments:prolates}
\end{subsection}

\label{section:experiments}
\end{section}

\begin{section}{Acknowledgments}
The author would like to thank Vladimir Rokhlin for reading a draft
of this manuscript and for his many helpful suggestions,
and Andrei Osipov for providing
his code for evaluating prolate spheriodal wave functions.
James Bremer was supported by a fellowship from the Alfred P. Sloan
Foundation, and by  National Science Foundation grant DMS-1418723.
\end{section}

\begin{section}{References}
\bibliographystyle{acm}
\bibliography{numerics}
\end{section}

\vfill\eject
\begin{figure}[h!!]
\begin{center}
\includegraphics[width=.45\textwidth]{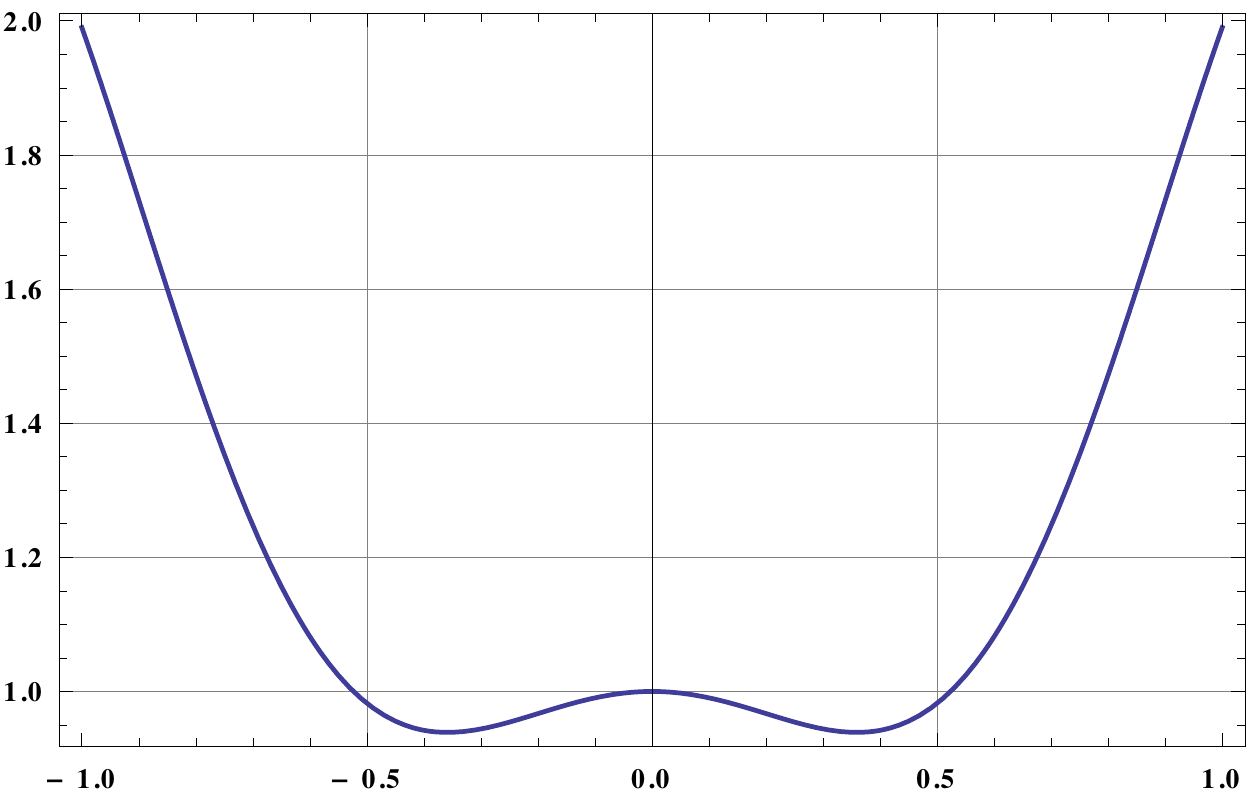}
\hfil
\includegraphics[width=.45\textwidth]{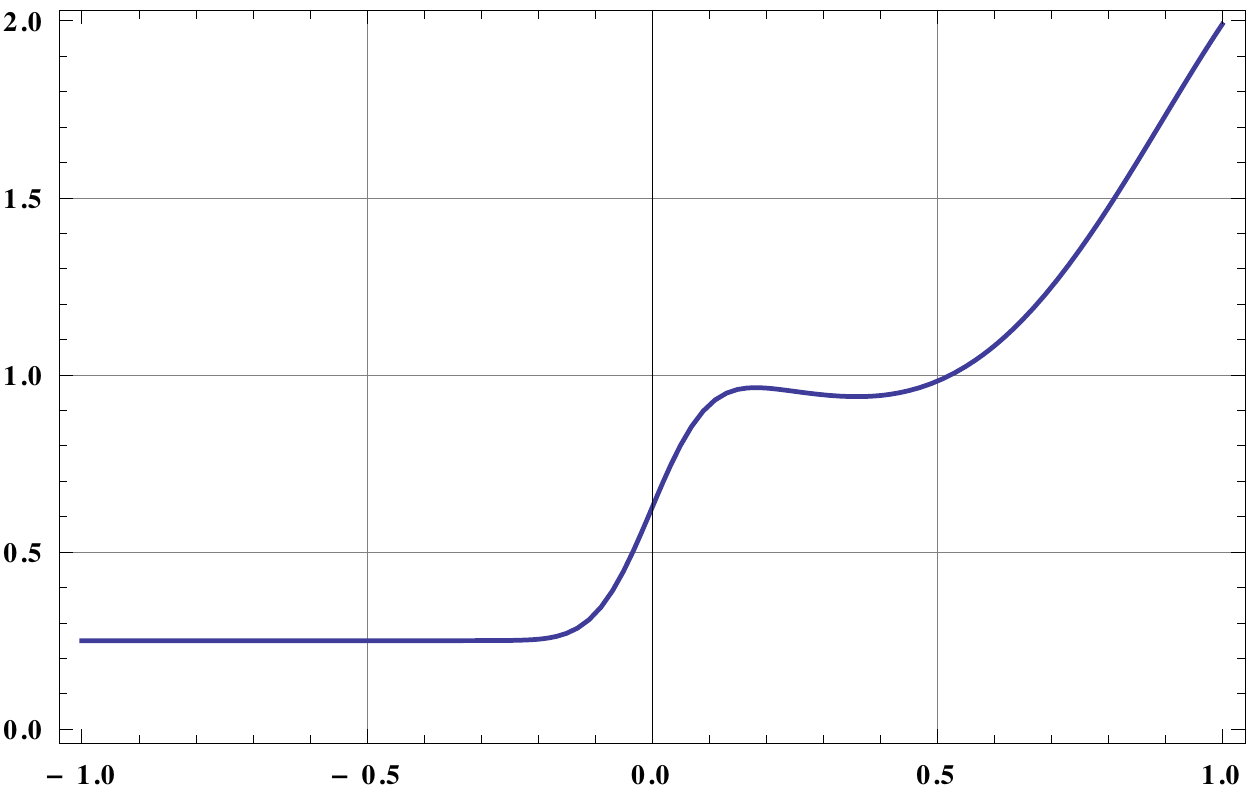}
\caption
{
The function $q$ defined by formula (\ref{experiments:simple:q}) 
in Section~\ref{section:experiments:simple} (left) and the
windowed version $\tilde{q}$ of $q$ (right).
}
\label{figure:simple:q}
\end{center}
\end{figure}

\vfil

\begin{figure}[h!!]
\begin{center}
\includegraphics[width=.45\textwidth]{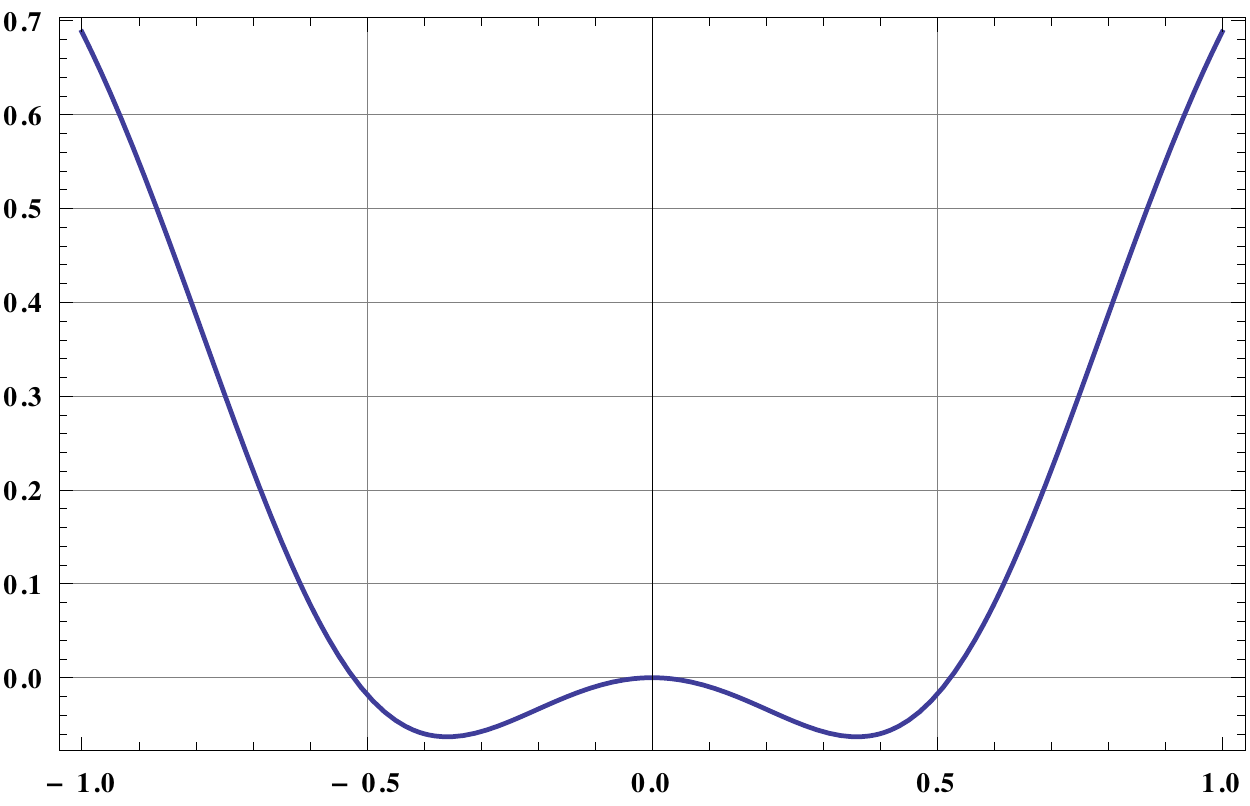}
\hfil
\includegraphics[width=.45\textwidth]{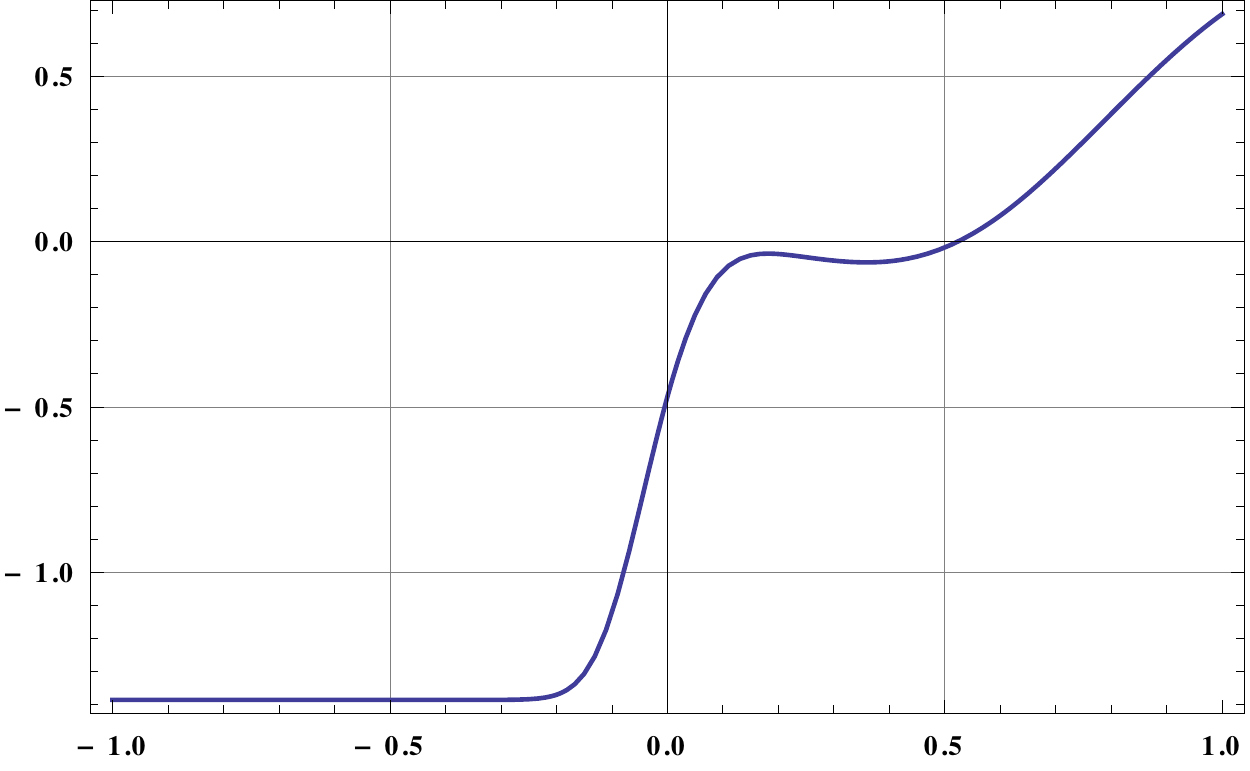}
\caption
{
Plots of the solution $r$ of the logarithm form of Kummer's
equation associated with the ordinary differential equation 
(\ref{experiments:simple:ode})  
in Section~\ref{section:experiments:simple} when $\lambda = 10^7$ (left) 
and the function $r_1$ constructed as an intermediate
step by the algorithm of Section~\ref{section:algorithm} (right).
}
\label{figure:simple:r}
\end{center}
\end{figure}
\vfill

\begin{table}[h!!]
\begin{center}
\begin{tabular}{cccc}
\multirow{2}{*}{$\lambda$}  & Phase function  & Avg. phase function & Maximum \\
                            & construction time & evaluation time & error \\
\midrule
\addlinespace[.5em]
$10^1$ & $7.25\e{-02}$ & $1.55\e{-06}$ & $6.93\e{-14}$\\
\addlinespace[.25em]
$10^2$ & $9.17\e{-02}$ & $1.58\e{-06}$ & $5.39\e{-13}$\\
\addlinespace[.25em]
$10^3$ & $6.74\e{-02}$ & $1.55\e{-06}$ & $3.01\e{-12}$\\
\addlinespace[.25em]
$10^4$ & $6.73\e{-02}$ & $1.55\e{-06}$ & $4.82\e{-11}$\\
\addlinespace[.25em]
$10^5$ & $6.72\e{-02}$ & $1.59\e{-06}$ & $3.23\e{-10}$\\
\addlinespace[.25em]
$10^6$ & $6.66\e{-02}$ & $1.64\e{-06}$ & $5.15\e{-09}$\\
\addlinespace[.25em]
$10^7$ & $8.60\e{-02}$ & $1.61\e{-06}$ & $3.64\e{-08}$\\
\end{tabular}
\caption{
The accuracy and running time of the algorithm of this paper
when applied to to the initial value problem
(\ref{experiments:simple:ode}) of Section~\ref{section:experiments:simple}.
}
\label{numerics:simple:table}
\end{center}
\end{table}

\vfil

\begin{figure}[h!!]
\begin{center}
\includegraphics[width=.9\textwidth]{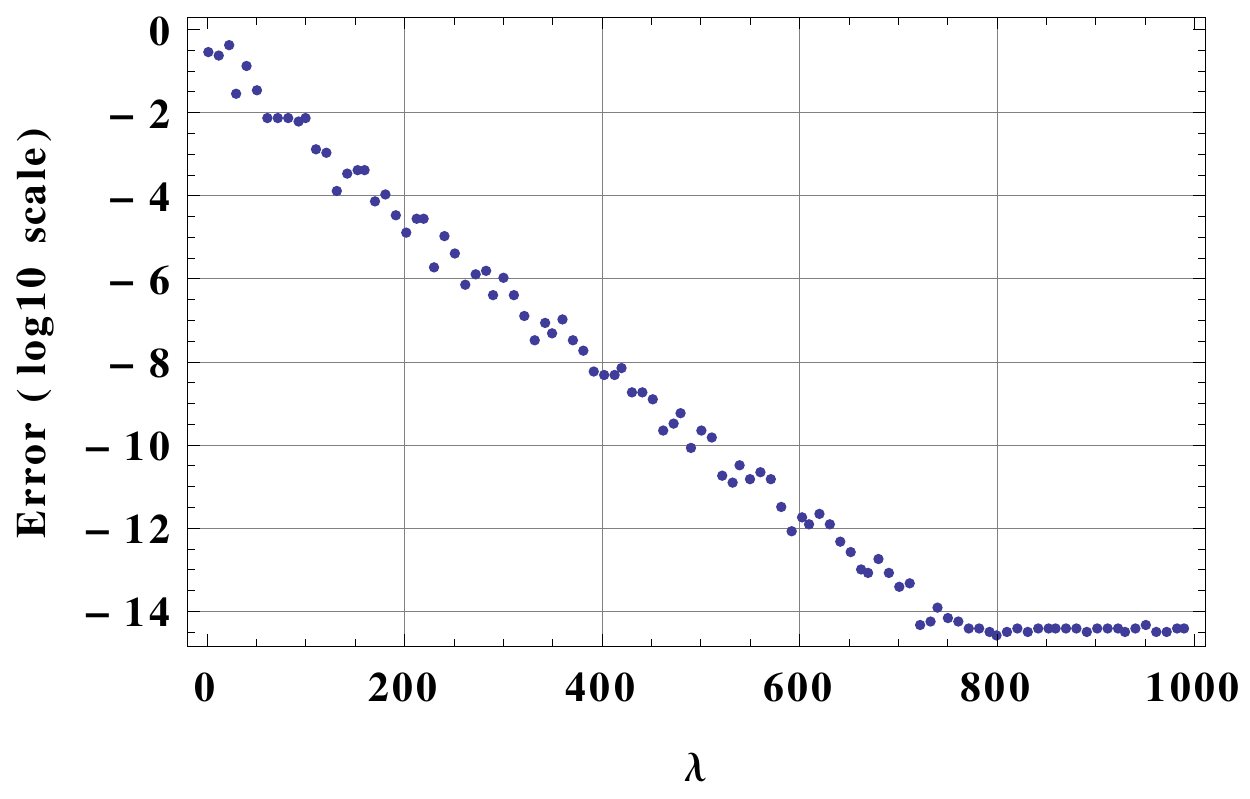}
\caption{
A plot of the base-$10$ logarithm of the relative difference between
phase function obtained by applying the algorithm
of this paper to Chebyshev's equation (\ref{experiments:chebyshev:3})
and the  well-known nonoscillatory phase function 
$ \lambda\arccos(t)$ for Chebyshev's equation.
}
\label{numerics:chebyshev:figure1}
\end{center}
\end{figure}


\vfil
\begin{table}[h!!!!!!!]
\begin{center}
\begin{tabular}{ccccc}
\multirow{2}{*}{$n$}
&   Phase function  & Avg. phase function & Avg. recurrence  & Maximum \\
& construction time & evaluation time     & evaluation time  & error\\
\midrule
\addlinespace[.5em]
$10^1$    & 1.70\e{-02} secs    & 2.24\e{-07} secs & 1.40\e{-06} secs & 1.58\e{-14} \\
\addlinespace[.25em]
$10^2$    & 2.27\e{-02} secs    & 2.06\e{-07} secs & 6.17\e{-06} secs & 1.75\e{-14} \\
\addlinespace[.25em]
$10^3$    & 1.62\e{-02} secs    & 2.23\e{-07} secs & 4.60\e{-05} secs & 4.62\e{-14} \\
\addlinespace[.25em]
$10^4$    & 1.65\e{-02} secs    & 2.24\e{-07} secs & 4.29\e{-04} secs & 3.52\e{-13} \\
\addlinespace[.25em]
$10^5$    & 1.62\e{-02} secs    & 2.29\e{-07} secs & 4.12\e{-03} secs & 4.70\e{-13} \\
\addlinespace[.25em]
$10^6$    & 1.66\e{-02} secs    & 2.65\e{-07} secs & 4.20\e{-02} secs & 1.66\e{-12} \\
\addlinespace[.25em]
$10^7$    & 2.94\e{-02} secs    & 2.69\e{-07} secs & 4.22\e{-01} secs & 3.88\e{-11} \\
\addlinespace[.25em]
$10^8$    & 6.42\e{-01} secs    & 6.39\e{-07} secs & 4.33\e{+00} secs & 3.91\e{-11} \\
\end{tabular}
\caption{
A comparison of the time required to evaluate the Bessel function $J_n$
using the standard recurrence relation 
with that required to evaluate it  using a nonoscillatory phase function.
The recurrence relation approach scales as $O(n)$ in the order $n$ 
while the time required by the phase function method is $O(1)$.
}
\label{experiments:bessel:table1}
\end{center}
\end{table}

\vfill

\begin{figure}[h!!]
\begin{center}
\includegraphics[width=.9\textwidth]{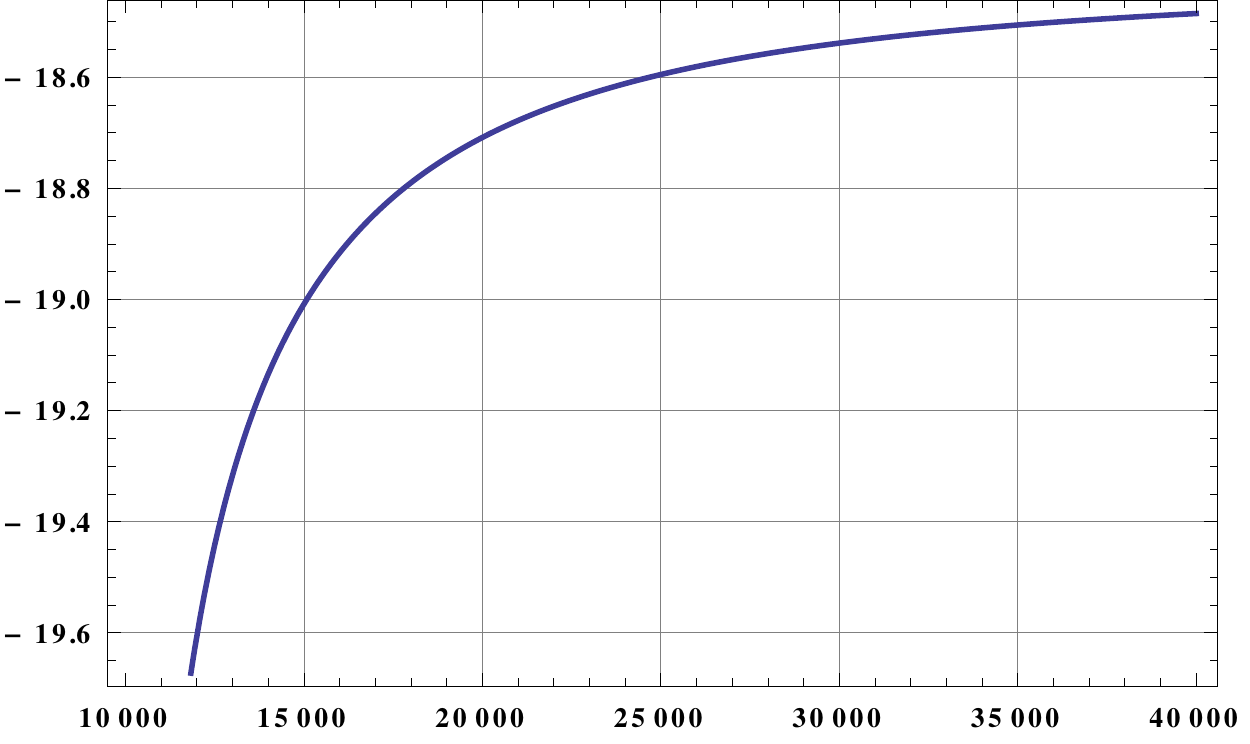}
\caption{
A plot of the nonoscillatory phase function for 
Bessel's equation (\ref{experiments:bessel:1}) when $n = 10^4$.
}
\label{experiments:bessel:figure1}
\end{center}
\end{figure}

\vfil

\begin{table}[h!!]
\begin{center}
\begin{tabular}{cccc}
\multirow{2}{*}{$\nu$}    &   Phase function & Avg. phase function & Maximum \\
                          & construction time & evaluation time    &  error\\
\midrule
\addlinespace[.5em]
$\pi \cdot 10^4$      & 2.92\e{-03} secs     & 3.13\e{-07} secs & 1.32\e{-13} \\
\addlinespace[.25em]
$\pi \cdot 10^5$      & 2.60\e{-03} secs     & 3.84\e{-07} secs & 3.24\e{-13} \\
\addlinespace[.25em]
$\pi \cdot 10^6$      & 2.46\e{-03} secs     & 4.63\e{-07} secs & 1.09\e{-12} \\
\addlinespace[.25em]
$\pi \cdot 10^7$      & 2.97\e{-03} secs     & 3.67\e{-07} secs & 3.21\e{-12} \\
\addlinespace[.25em]
$\pi \cdot 10^8$      & 2.53\e{-03} secs     & 3.61\e{-07} secs & 1.35\e{-11} \\
\addlinespace[1.0em]
$\sqrt{2} \cdot 10^4$      & 5.61\e{-03} secs     & 3.20\e{-07} secs & 7.22\e{-13} \\
\addlinespace[.25em]
$\sqrt{2} \cdot 10^5$      & 5.43\e{-03} secs     & 3.54\e{-07} secs & 3.32\e{-12} \\
\addlinespace[.25em]
$\sqrt{2} \cdot 10^6$      & 2.49\e{-03} secs     & 4.26\e{-07} secs & 1.13\e{-12} \\
\addlinespace[.25em]
$\sqrt{2} \cdot 10^7$      & 2.61\e{-03} secs     & 3.84\e{-07} secs & 2.79\e{-12} \\
\addlinespace[.25em]
$\sqrt{2} \cdot 10^8$      & 5.43\e{-03} secs     & 3.67\e{-07} secs & 8.65\e{-12} \\
\addlinespace[.25em]
$\sqrt{2} \cdot 10^9$      & 4.94\e{-03} secs     & 3.95\e{-07} secs & 2.85\e{-11} \\
\end{tabular}
\caption{
The results obtained by applying the 
algorithm of Section~\ref{section:algorithm} to Legendre's 
differential equation (\ref{numerics:legendre:4}).  We observe that
the running time is independent of $\nu$, but that some accuracy
is lost when evaluating Legendre functions of large orders.
}
\label{numerics:legendre:table1}
\end{center}
\end{table}

\begin{figure}[h!!]
\begin{center}
\includegraphics[width=.9\textwidth]{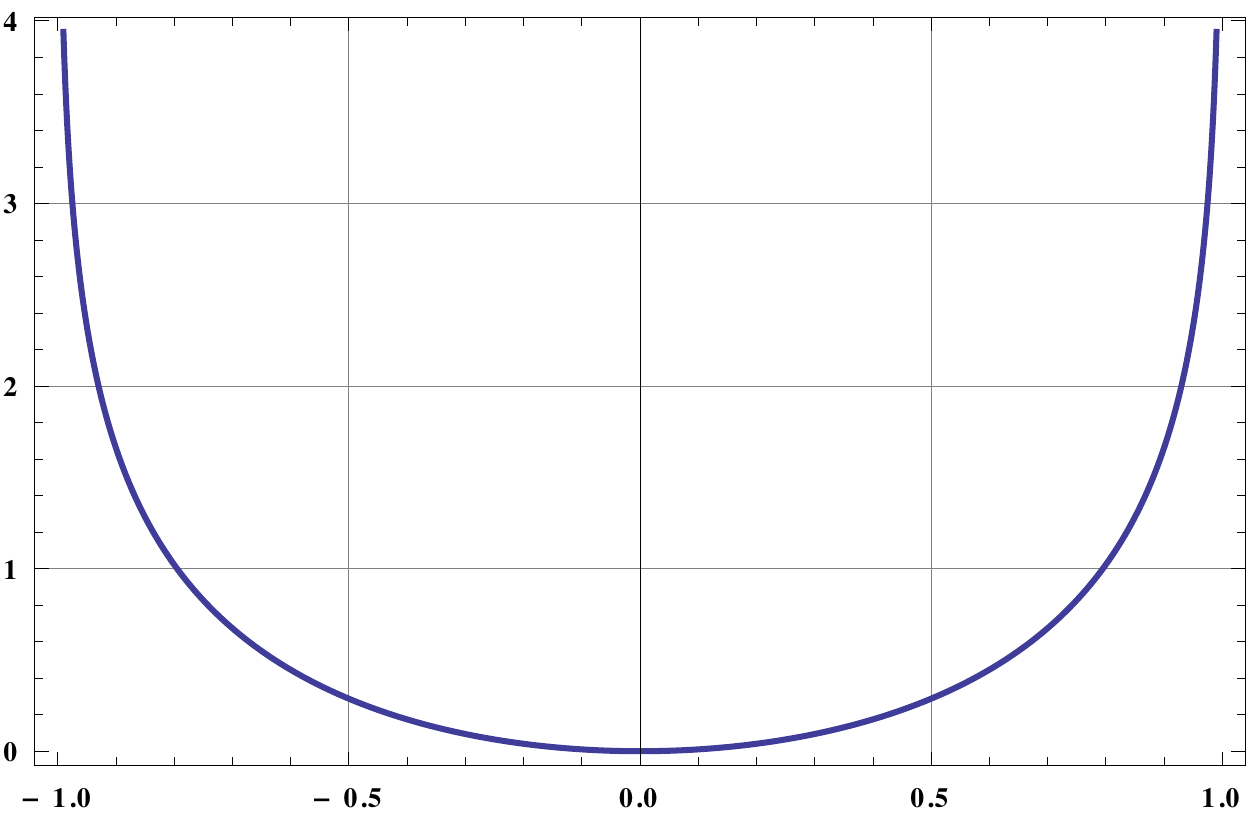}
\caption{
A plot of the nonoscillatory solution of the logarithm form of Kummer's equation
associated with 
Legendre's differential equation (\ref{numerics:legendre:4}) when $\nu = \pi 10^5$.
This function has singularities  at the points $\pm 1$ and is represented using a graded
mesh which becomes dense near them.
}
\label{numerics:legendre:figure1}
\end{center}
\end{figure}

\vfil

\vfill
\begin{figure}[h!!]
\begin{center}
\includegraphics[width=.9\textwidth]{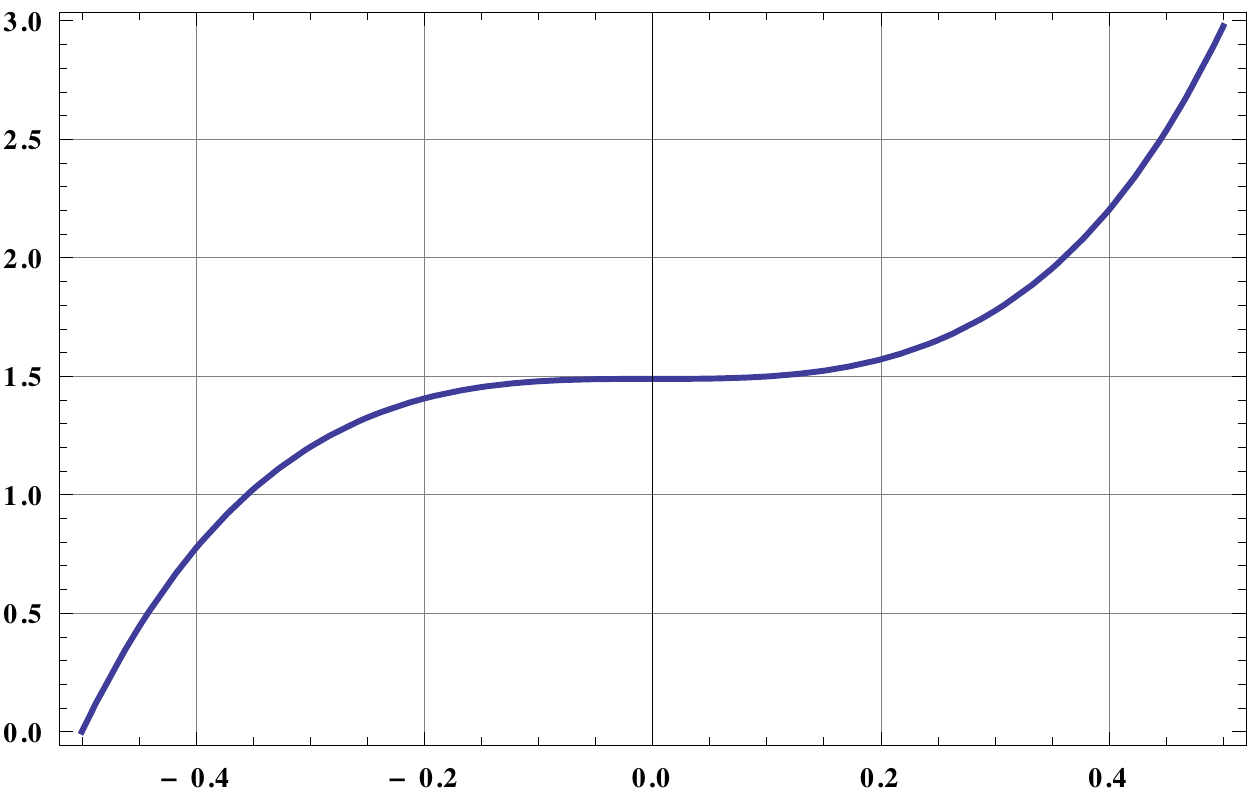}
\caption{
A plot of the function $\alpha(t) - c t$, where $\alpha$ 
is nonoscillatory phase function associated with equation
(\ref{experiments:prolates:5}), $c=10^5$, $n = 63769$ ,  and $\chi_{n,c} =1.00060408908491\e{10}$.
}
\label{numerics:prolate:figure1}
\end{center}
\end{figure}

\begin{sidewaystable}[h!]
\begin{center}
\begin{tabular}{rrccccc}
\multirow{3}{*}{$c$}& \multirow{3}{*}{$n$} &  \multirow{3}{*}{$\chi_{c,n}$} &                  & Avg. phase         & Average                         & \\
                    &  &                                  &         Phase function                         & function            & Osipov-Rokhlin    & Maximum \\
                    &  &                          &             construction time                & evaluation time     & evaluation time  & error  \\
\midrule
\addlinespace[.5em]
$10^4$ & $12904$ & $2.18416195669669 \e{+08}$ & 3.30\e{-02} secs     & 3.22\e{-07} secs & 1.65\e{-04} secs & 3.24\e{-11} \\
\addlinespace[.25em]
$10^5$ & $63769$    & $1.00060408908491 \e{+10}$ & 3.29\e{-02} secs     & 3.29\e{-07} secs & 1.11\e{-03} secs &1.67\e{-10} \\
\addlinespace[.25em]
$10^5$ & $95653$    & $1.44996988449419 \e{+10}$ & 3.32\e{-02} secs     & 3.19\e{-07} secs & 1.34\e{-03} secs &2.00\e{-10} \\
\addlinespace[.25em]
$10^6$ & $636748$   &  $1.00005993679849 \e{+12}$ & 3.31\e{-02} secs     & 3.70\e{-07} secs & 1.14\e{-02} secs & 1.77\e{-09} \\
\addlinespace[.25em]
$10^6$ & $1910244$   & $4.15761057502686 \e{+12}$ & 3.30\e{-02} secs     & 3.60\e{-07} secs & 2.34\e{-02} secs & 5.19\e{-09} \\
\addlinespace[.25em]
$10^7$ & $12732696$ &  $2.14063766093698 \e{+14}$ & 3.08\e{-02} secs     & 3.60\e{-07} secs & 1.59\e{-01} secs & 3.74\e{-08} \\
\addlinespace[.25em]
$10^8$ & $127324798$ & $2.14057058422053 \e{+16}$ & 4.34\e{-02} secs     & 4.10\e{-07} secs & 1.59\e{+00} secs & 3.30\e{-07} \\
\addlinespace[.25em]
$10^8$ & $190986447$ & $4.15616226165926 \e{+16}$ & 4.32\e{-02} secs      & 3.60\e{-07} secs & 2.22\e{+00} secs & 4.35\e{-07} \\
\addlinespace[.25em]
$10^9$ & $636619965$ & $1.00000005945416 \e{+18}$ & 4.31\e{-02} secs     & 4.01\e{-07} secs & 1.15\e{+01} secs & 8.61\e{-07} \\
\addlinespace[.25em]
\end{tabular}
\end{center}
\caption{A comparison of the results obtained by using the algorithm of this paper to evaluate prolate spheriodial 
wave functions of order $0$ with those obtained via the Osipov-Rokhlin algorithm \cite{Osipov-Rokhlin}.}
\label{numerics:prolate:table}
\end{sidewaystable}

\end{document}